\def\End{\mathrm{End}}
\title{Quantum Observables on a Completely Simple Semigroup}
\author{Ph. Feinsilver}
\address{Department of Mathematics\\
Southern Illinois University\\
Carbondale, IL\\
62901 USA}
\begin{document}

\maketitle
\begin{abstract}{ Completely simple semigroups arise as the support of limiting measures of random walks on semigroups. Such a  limiting
measure is supported on the kernel of the semigroup. Forming tensor powers of the random walk leads to a hierarchy of the limiting kernels.
Tensor squares lead to quantum observables on the kernel. Recall that zeons are bosons modulo the basis elements squaring to zero.
Using zeon powers leads naturally to quantum observables which reveal the structure of the kernel. Thus asymptotic information about
the random walk is related to algebraic properties of the zeon powers of the random walk.}
\end{abstract}

\thispagestyle{empty}

\section{Introduction}
This work is based on the connection between graphs and semigroups developed by Budzban and Mukherjea \cite{BuMu} in the context of their study
of the Road Coloring Problem [RCP]. 
We restrict throughout to finite transformation semigroups, that is, semigroups of functions acting on a finite set, $V$.
In their interpretation of the RCP, a principal question is whether one can determine the rank of the kernel of a semigroup
from its generators. Questions in general about the kernel of a semigroup are of interest. We will explain this terminology
as we proceed.  \bigskip

The study of probability measures on semigroups by Mukherjea \cite{M}, Mukherjea-H\"ognas \cite{HM} in particular
reveals the result that the convolution powers of a measure on a finite semigroup, $\SG$, converge (in the Ces\`aro sense)
to a measure on the kernel, $\K$, the minimal ideal of the semigroup. The kernel is always \textsl{completely simple} (see Appendix \ref{sec:sgrpskrnls}
for precise details). In this context, a completely simple semigroup is described as follows. 
It is a union of disjoint isomorphic groups, called ``local groups". The kernel $\K$ can be organized as a two-dimensional grid with 
rows labelled by \textsl{partitions} of the underlying set $V$ and columns labelled by \textsl{range classes}, a family of equinumerous
sets into whose elements the blocks of a given partition are mapped by the members of $\K$. The cardinality of a range class is the \textsl{rank}
of the kernel. The idempotent  $e$ determined by a pair $(\P,\R)$, with $\P$ a partition of $V$ and $\R$ a range class, 
acting as the identity of the local group, fixes each of the elements of the range class.
Each of the mutually isomorphic local groups is isomorphic to a permutation group acting on its range class. We think of
the kernel as composed of \textsl{cells} each labelled by a pair ($\P,\R)$ and containing the corresponding local group.\bigskip

First we recall the basic theory needed about probability measures on semigroups, continuing with the related context of walks on graphs.
Briefly, the adjacency matrix of the graph is rescaled to a stochastic matrix, $A$. A decomposition of the adjacency matrix into binary stochastic
matrices, $C_i$, i.e., matrices representing functions acting on the vertices of the graph, is called a \textsl{coloring}. The coloring functions generate
the semigroup $\SG$ and the question is to determine features of the kernel $\K\subset\SG$ from the generators $C_i$. Here we present
novel techniques by embedding the semigroup in a hierarchy of tensor powers. We will find particular second-order tensors to provide
the operators giving us ``quantum observables". The constructions restrict to trace-zero tensors using zeon Fock space instead of the full tensor space. \bigskip

For convenient reference, we put some notations here.

\subsection{Notations}

We treat vectors as ``row vectors", i.e., $1\times n$ matrices, with $\e_i$ as the corresponding standard
basis vectors. The column vector corresponding to $v$ is $\dg v.$.  For a matrix $M$,
$M^*$ denotes its transpose.
And $\diag{v}$ denotes the diagonal matrix with entries $v_i$ of the vector $v$. \bigskip

The vector having components all equal to 1 is denoted $u$. And the matrix 
$J=\dg u.u$ has entries all ones. We use the convention that the identity matrix, $I$, as well as
$u$ and $J$, denote matrices of the appropriate size according to the context. \bigskip

\begin{section}{Probability measures on finite semigroups}
Since we have a discrete finite set, a probability measure is given as a function on the elements. 
The semigroup algebra is the algebra generated by the elements $w\in\SG$. In general, we consider formal
sums $\sum f(w)\,w$. The function $\mu$ defines a probability measure if 
$$0\le\mu(w)\le1\,,\forall w\in\SG\,, \text{ and  } \sum \mu(w)=1\ .$$
The corresponding element of the semigroup algebra is thus $\sum \mu(w)\,w$. The product of elements in the semigroup algebra yields
the convolution of the coefficient functions. Thus, for the convolution of two measures 
$\mu_1$ and $\mu_2$ we have
\begin{align*}
\sum_{w\in\SG} \mu_1*\mu_2(w)\,w&=\bigl(\sum_{w\in\SG} \mu_1(w)\,w\bigr)\bigl(\sum_{w'\in\SG} \mu_2(w')\,w'\bigr)\\
&=\sum_{w,w'\in\SG} \mu_1(w)\mu_2(w')\,ww'
\end{align*}
Hence the convolution powers, $\mu^{(n)}$ of a single measure $\mu=\mu^{(1)}$ satisfy
$$\sum_{w\in\SG} \mu^{(n)}(w)\,w=\bigl(\sum_{w\in\SG} \mu^{(1)}(w)\,w\bigr)^n$$
in the semigroup algebra. \bigskip

\begin{subsection}{Invariant measures on the kernel}
We can consider the set of probability measures with support the kernel of a finite semigroup of matrices (see Appendix \ref{sec:sgrpskrnls} for more details).
 Given such a measure $\mu$, it is 
\textit{idempotent} if $\mu*\mu=\mu$, i.e., it is idempotent with respect to convolution. An idempotent measure on a finite group must be the uniform distribution
on the group, its Haar measure. In general, we have 
\begin{theorem}\cite[Th. 2.2]{HM} \label{thm:idem}
An idempotent measure $\mu$ on a finite semigroup $\SG$ is supported on a completely simple subsemigroup, $\K'$ of $\SG$.
With Rees product decomposition $\K'=\ms.X'.\times\ms.G'.\times\ms.Y'.$, $\mu$ is a direct product of the 
form $\alpha\times\omega\times\beta$, where $\alpha$ is a measure on $\ms.X',.$ $\omega$ is Haar measure on $\ms.G',.$ and
$\beta$ is a measure on $\ms.Y'.$.
\end{theorem}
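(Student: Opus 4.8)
The plan is to argue entirely inside the semigroup algebra, using that in the finite discrete setting $\mathrm{supp}(\mu_1*\mu_2)=\mathrm{supp}(\mu_1)\,\mathrm{supp}(\mu_2)$. First I would set $S=\mathrm{supp}(\mu)$; then $\mu*\mu=\mu$ gives $SS=S$, so $S$ is a finite subsemigroup of $\SG$. To see $S$ is completely simple, take its kernel $\K'$ (the minimal ideal, completely simple by Appendix~\ref{sec:sgrpskrnls}): for any ideal $I\subseteq S$ one has $\{(a,b):ab\in I\}\supseteq(I\times S)\cup(S\times I)$, hence $\mu(I)=(\mu*\mu)(I)\ge 2\mu(I)-\mu(I)^2$, so $\mu(I)^2\ge\mu(I)$; since $\K'\subseteq S=\mathrm{supp}(\mu)$ is nonempty, $\mu(\K')>0$, forcing $\mu(\K')=1$ and therefore $\mathrm{supp}(\mu)\subseteq\K'\subseteq S$. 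Thus $S=\K'$ is completely simple, which is the first assertion.

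Next I would fix a Rees representation $\K'=X'\times G'\times Y'$ with sandwich matrix $P=(p_{y,x})$ and multiplication $(x,g,y)(x',g',y')=(x,\,g\,p_{y,x'}\,g',\,y')$; note $\K'$ is the full Cartesian product as a set, so $\mu>0$ everywhere. The coordinate maps $(x,g,y)\mapsto x$ and $(x,g,y)\mapsto y$ are homomorphisms onto the left-zero semigroup $X'$ and the right-zero semigroup $Y'$, so the joint map onto $X'\times Y'$ is a homomorphism; pushing $\mu=\mu*\mu$ forward and computing in $X'\times Y'$ shows the $(X',Y')$-marginal of $\mu$ is $\alpha\times\beta$, where $\alpha,\beta$ are the $X'$- and $Y'$-marginals. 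Hence $\mu(x,g,y)=\alpha(x)\,\beta(y)\,\nu_{x,y}(g)$ for probability measures $\nu_{x,y}$ on $G'$, each of full support.

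It remains to show $\nu_{x,y}$ is normalized Haar measure $\omega$ on $G'$, independently of $(x,y)$. Here I would use $\mu=\mu^{*n}$ for all $n$: realize a $\mu^{*n}$-distributed element as a product $W_1\cdots W_n$ of i.i.d.\ $\mu$-distributed elements, $W_k=(X_k,G_k,Y_k)$, whose $G'$-component is $H_n=\xi_1\cdots\xi_n$ with $\xi_1=G_1$ and $\xi_k=p_{Y_{k-1},X_k}\,G_k$ for $k\ge2$. Conditionally on the index sequence $(X_k,Y_k)_{k\le n}$, the $\xi_k$ are independent with distributions $\delta_{p_{Y_{k-1},X_k}}*\nu_{X_k,Y_k}$ — each a translate of a full-support measure, hence full support, and drawn from a \emph{finite} list because $X'$ and $Y'$ are finite. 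A coupling argument using Haar absorption ($\rho*\omega=\omega$) then shows that a convolution of $n$ uniformly-full-support measures on $G'$ tends to $\omega$ as $n\to\infty$, uniformly over the index sequences; averaging over those sequences gives $\mathcal{L}(H_n\mid X_1=x,Y_n=y)\to\omega$. But $\mathcal{L}(H_n\mid X_1=x,Y_n=y)=\nu_{x,y}$ for every $n$ because $\mu^{*n}=\mu$, so $\nu_{x,y}=\omega$, and $\mu=\alpha\times\omega\times\beta$.

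The routine parts are the support/kernel bookkeeping and the coordinate computations; the one step I expect to be delicate is the last. The twisted (sandwich) multiplication means there is in general no semigroup homomorphism from $\K'$ onto $G'$, so one cannot simply quotient to the group case; the $G'$-data lives inside the maximal subgroups. The argument above routes around this by treating the $G'$-component as a non-i.i.d.\ but conditionally independent random walk with uniformly full-support increments and invoking its convergence to Haar measure. An alternative would be to condition $\mu$ onto a maximal subgroup $e\K'e$ for an idempotent $e\in\K'$ and cite the classical fact that an idempotent probability measure on a finite group is its Haar measure — but then the subtle point becomes verifying that the conditioned measure is itself idempotent on $e\K'e$, since $\mu$ a priori leaks out of and back into $e\K'e$.
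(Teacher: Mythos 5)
The paper offers no proof of this theorem: it is quoted verbatim from H\"ogn\"as--Mukherjea \cite[Th.~2.2]{HM} and used as a black box, so there is no internal argument to compare yours against. Taken on its own, your proof is correct and essentially complete. The first two stages are clean: $SS=S$ makes the support $S$ a subsemigroup; the inclusion $\{(a,b):ab\in I\}\supseteq (I\times S)\cup(S\times I)$ for an ideal $I$ of $S$ forces $\mu(I)\in\{0,1\}$, whence $\mu(\K')=1$ and $S=\K'$; and pushing $\mu$ forward under the rectangular-band homomorphism $(x,g,y)\mapsto(x,y)$ correctly yields the product form $\alpha\times\beta$ of the marginal, so that $\mu(x,g,y)=\alpha(x)\beta(y)\nu_{x,y}(g)$ with each $\nu_{x,y}$ of full support. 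Your Haar step also goes through: conditionally on the index sequence the increments of $H_n$ are independent with laws drawn from the finite list of translates $\delta_{p_{y,x'}}*\nu_{x',y'}$, all of whose atoms exceed some $\delta>0$; writing each such law as $\delta|G'|\,\omega+(1-\delta|G'|)\tilde\rho$ (legitimate since $\delta|G'|\le1$) and using $\omega*\sigma=\sigma*\omega=\omega$ gives a total-variation distance to $\omega$ of at most $(1-\delta|G'|)^{n-1}$, uniformly in the index sequence; averaging over the intermediate indices preserves this bound, and since the conditional law of $H_n$ given $(X_1,Y_n)=(x,y)$ equals $\nu_{x,y}$ for every $n$, the limit forces $\nu_{x,y}=\omega$. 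This Doeblin-type probabilistic route differs from the more algebraic argument in \cite{HM} (translation-invariance of the conditional measures on the maximal subgroups), and it has the merit of sidestepping exactly the subtlety you flag at the end, namely that the restriction of $\mu$ to $e\K'e$ is not obviously idempotent.
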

In our context, we will have the measure $\mu$ supported on the kernel $\K$ of $\SG$.
We identify $\ms.X.$ with the partitions of the kernel and $\ms.Y.$ with the ranges. The local groups are mutually isomorphic finite
groups with the Haar measure assigning $1/|G|$ to each element of the local group $G$. Thus, if $k\in\K$ has Rees product decomposition
$(k_1,k_2,k_3)$ we have
$$\mu(k)=\alpha(k_1)\beta(k_3)/|G|$$
where $|G|$ is the common cardinality of the local groups, $\alpha(k_1)$ is the $\alpha$-measure of the partition of $k$, and $\beta(k_3)$ is the
$\beta$-measure of the range of $k$.
\end{subsection} \bigskip

\begin{remark} Given a graph with $n$ vertices, we identify $V$ with the numbers $\{1,2,\ldots,n\}$. 
Generally, given $n$, we are considering semigroups of transformations acting on the set $\{1,2,\ldots,n\}$.  \bigskip

For a function on $\{1,2,\ldots,n\}$, the notation $f=[x_1x_2\ldots x_n]$ means that $f(i)=x_i$, $i\le 1\le n$. And the 
rank of $f$ is the number of distinct values $\{x_i\}_{1\le i\le n}$.
\end{remark}

\begin{example} Here is an example with $n=6$. Take two functions $r=[451314]$, $b=[245631]$ and generate the semigroup $\SG$.
We find the elements of minimal rank, in this case it is 3.
The structure of the kernel is summarized in  a table with rows labelled by the partitions and columns labelled by the range classes. The entry is
the idempotent with the given partition and range. It is the identity for the local group of matrices with the given partition and range.
\begin{equation*}\label{eq:k6}
\begin{array}{c|c|c|c|c|}
&\{1, 3, 4\}&\{1, 4, 5\}&\{2, 3, 6\}&\{2, 5, 6\}\vstrut\cr
\hline
\{\{1, 2\}, \{3, 5\},\{4, 6\} \}& [1 1 3 4 3 4]& [1 1 5 4 5 4]&[2 2 3 6 3 6]&[2 2 5 6 5 6] \vstrut\cr
\hline
\{\{1, 6\},\{2, 4\},  \{3, 5\}\}& [1 4 3 4 3 1]&[1 4 5 4 5 1]&[6 2 3 2 3 6]& [6 2 5 2 5 6] \vstrut\cr
\hline
\end{array}
\end{equation*}
The kernel has 48 elements, the local groups being isomorphic to the symmetric group $S_3$.  \bigskip

Each cell consists of functions with the given partition, acting as permutations on the range class. They are given
by matrices whose nonzero columns are in the positions labelled by the range class. The entries in a nonzero column are in rows
labelled by the elements comprising the block of the partition mapping into the element labelling that column.
We label the partitions 
$$\P_1=\{\{1, 2\}, \{3, 5\},\{4, 6\} \}\,,\qquad \P_2=\{\{1, 6\},\{2, 4\},  \{3, 5\}\}$$
and the range classes
$$\R_1=\{1, 3, 4\}\,,\quad \R_2=\{1, 4, 5\}\,,\quad \R_3=\{2, 3, 6\}\,,\quad \R_4=\{2, 5, 6\}\ .$$
The local group with partition $\P_1$ and range $\R_3$, for example, are the idempotent $[2 2 3 6 3 6]$ noted in the above table and the functions
$$\{\,[6 6 3 2 3 2], [6 6 2 3 2 3], [3 3 6 2 6 2], [3 3 2 6 2 6], [2 2 6 3 6 3]\,\}$$
isomorphic to $S_3$ acting on the range class $\{2,3,6\}$. For the function $ [3 3 2 6 2 6]$, in the matrix semigroup we have the correspondence
$$  [3 3 2 6 2 6]\  \longleftrightarrow\  \begin{bmatrix}
0& 0& 1& 0& 0& 0\\0& 0& 1& 0& 0& 0\\ 0& 1& 0& 0& 0& 0\\0& 0& 0& 0& 0& 1\\0& 1& 0& 0& 0& 0\\0& 0& 0& 0& 0& 1
\end{bmatrix}\ .
$$
\bigskip

Using the methods developed below, one finds the measure on the partitions to be
$$\alpha=[1/3,2/3]$$
while that on the ranges is
$$\beta=[4/9,2/9,1/9,2/9]$$
as required by invariance.
\end{example} 
\end{section}

\begin{section}{Graphs, semigroups, and dynamical systems}
We start with a regular $d$-out directed graph on $n$ vertices with adjacency matrix $\A$. 
Number the vertices once and for all and identify vertex $i$ with $i$ and vice versa. \bigskip

Form the stochastic matrix $A=d^{-1}\,\A$.  We assume that $A$ is irreducible and aperiodic.
In other words, the graph is strongly connected and the limit
$$\lim_{m\to\infty}A^m=\Omega$$
exists and is a stochastic matrix with identical rows, the invariant distribution for the corresponding Markov chain, which we denote
by $\pi=[p_1,\ldots,p_n]$. The limiting matrix satisfies
$$\Omega=A\Omega=\Omega A=\Omega^2$$
so that the rows and columns are fixed by $A$, eigenvectors with eigenvalue 1. \bigskip

Decompose $A=\frac{1}{d} C_1+\cdots +\frac{1}{d} C_d$, into binary stochastic matrices, 
``colors", corresponding to the $d$ choices moving from a given vertex to another. Each coloring matrix $C_i$ is the matrix
of a function $f$ on the vertices as in the discussion in \S2. Let
$\SG=\SG((C_1,C_2,\ldots,C_d))$ be the semigroup generated by the matrices $C_i$, $1\le i\le d$. Then
we may write the decomposition of $A$ into colors in the form
$$A=\mu^{(1)}(1)C_1+\cdots +\mu^{(1)}(d) C_d=\sum_{w\in\SG} \mu^{(1)}(w)\,w$$
with $\mu^{(1)}$ a probability measure on $\SG$, thinking of the elements of $\SG$ as words $w$, strings from the alphabet generated
by $\{C_1,\ldots,C_d\}$. We have seen in the previous section that
\begin{equation}\label{eq:ces}
A^m=\sum_{w\in\SG}\mu^{(m)}(w)\,w
\end{equation}
where $\mu^{(m)}$ is the $m^{\rm th}$ convolution power of the measure $\mu^{(1)}$ on $\SG$. \bigskip

The main limit theorem is the following
\begin{theorem}\cite[Th. 2.7]{HM} Let the support of $\mu^{(1)}$ generate the semigroup $\SG$. Then the Ces\`aro limit
$$\lambda(w)=\lim_{M\to\infty} \frac{1}{M}\,\sum_{m=1}^M \mu^{(m)}(w)$$
exists for each $w\in\SG$. The measure $\lambda$ is concentrated on $\K$, the kernel of the semigroup $\SG$. It has the canonical decomposition
$$\lambda=\alpha \times \omega \times \beta$$
corresponding to the Rees product decomposition of $X\times G\times Y$ of $\K$.
\end{theorem}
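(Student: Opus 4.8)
The plan is to combine a soft linear-algebra argument for existence of the limit with the structure theorem for idempotent measures, Theorem~\ref{thm:idem}, and elementary ideal theory of finite semigroups; the Rees decomposition of $\lambda$ is then read off from that of its support.

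\emph{Existence of the limit.} Left convolution by $\mu^{(1)}$, namely $\nu\mapsto\mu^{(1)}*\nu$, is a linear endomorphism $T$ of the finite-dimensional space of real-valued functions on $\SG$. In the basis of point masses its matrix has entries $T_{w,b}=\sum_{a:\,ab=w}\mu^{(1)}(a)\ge 0$ with column sums $\sum_a\mu^{(1)}(a)=1$, so $T$ is (column-)stochastic, and so is every $T^m$; in particular the powers of $T$ are uniformly bounded. Hence the Ces\`aro averages $\frac1M\sum_{m=1}^M T^m$ converge, by the mean ergodic theorem (equivalently, because the eigenvalues of modulus $1$ of a power-bounded operator are semisimple, so the Jordan form forces Ces\`aro convergence). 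Since $\mu^{(m)}=T^{m-1}\mu^{(1)}$, the averages $\frac1M\sum_{m=1}^M\mu^{(m)}$ converge to a limit $\lambda$, and nonnegativity and total mass $1$ pass to the averages and hence to $\lambda$, so $\lambda$ is a probability measure on $\SG$.

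\emph{Idempotency.} The sequence $(\mu^{(m+1)})_{m\ge 1}$ has the same Ces\`aro limit as $(\mu^{(m)})_{m\ge 1}$, namely $\lambda$; on the other hand $\mu^{(m+1)}=\mu^{(1)}*\mu^{(m)}$, so by continuity of convolution (a bilinear map on a finite-dimensional space) we get $\mu^{(1)}*\lambda=\lambda$, and symmetrically $\lambda*\mu^{(1)}=\lambda$. Iterating gives $\lambda*\mu^{(m)}=\lambda$ for every $m$, and averaging over $m$ yields $\lambda*\lambda=\lambda$.

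\emph{Support equals the kernel, and the canonical form.} By Theorem~\ref{thm:idem}, the idempotent measure $\lambda$ is supported on a completely simple subsemigroup $\K'$ of $\SG$ with Rees product decomposition $\K'=X'\times G'\times Y'$ and $\lambda=\alpha\times\omega\times\beta$, with $\omega$ the Haar measure of $G'$. It remains to identify $\K'$ with $\K$. Put $S=\operatorname{supp}\lambda$. On a finite semigroup the support of a convolution is the product set of the supports, so $\mu^{(1)}*\lambda=\lambda=\lambda*\mu^{(1)}$ gives $\operatorname{supp}(\mu^{(1)})\,S=S=S\,\operatorname{supp}(\mu^{(1)})$; iterating and using that $\operatorname{supp}(\mu^{(1)})$ generates $\SG$, we obtain $\SG\,S\subseteq S$ and $S\,\SG\subseteq S$, so $S$ is a two-sided ideal of $\SG$. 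Consequently $\K\subseteq S$, since the kernel is contained in every ideal; and $\K$, being an ideal of $\SG$ contained in $S$, is an ideal of the semigroup $S=\K'$, which is simple (completely simple semigroups have no proper two-sided ideals; see Appendix~\ref{sec:sgrpskrnls}), forcing $\K=S$. Thus the Rees decomposition of $\K'$ is a Rees decomposition $X\times G\times Y$ of $\K$, and $\lambda=\alpha\times\omega\times\beta$ with $\alpha$ a measure on $X$, $\omega$ the Haar measure of the local group $G$, and $\beta$ a measure on $Y$ — the asserted canonical decomposition. The step I expect to need the most care is this last one: applying ``support of a convolution $=$ product of supports'' correctly in the non-abelian setting, and invoking ``completely simple $\Rightarrow$ no proper ideals'' precisely, so as to upgrade $\K\subseteq S$ to $\K=S$; the existence and idempotency arguments are routine once $T$ is recognized as a stochastic matrix.
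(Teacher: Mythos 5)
Your proposal is correct, and it is worth noting that the paper itself gives no proof of this statement at all: it is quoted as \cite[Th.\ 2.7]{HM}, with the remark that a proof can be found in \cite{BF}. So your argument is not a variant of the paper's proof but a self-contained substitute for the citation, modulo Theorem~\ref{thm:idem} (which the paper also only cites). Your three steps are sound: convolution by $\mu^{(1)}$ is a column-stochastic, hence power-bounded, operator on the finite-dimensional space of functions on $\SG$, so the mean ergodic theorem gives Ces\`aro convergence and $\lambda$ is a probability measure; shift-invariance of Ces\`aro limits plus bilinearity gives $\mu^{(1)}*\lambda=\lambda=\lambda*\mu^{(1)}$ and then $\lambda*\lambda=\lambda$; and since the support of a convolution of nonnegative measures is the product of the supports, $S=\operatorname{supp}\lambda$ is a two-sided ideal of $\SG$ (using that $\operatorname{supp}\mu^{(1)}$ generates $\SG$), hence contains the minimal ideal $\K$, while complete simplicity of $S$ forces $\K=S$. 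The one point to state carefully is the reading of Theorem~\ref{thm:idem}: you need that the support of an idempotent measure is itself a completely simple subsemigroup, i.e.\ $S=\K'$, not merely $S\subseteq\K'$; this is indeed the content of \cite[Th.\ 2.2]{HM}, but with the weaker reading your final step (``$\K$ is an ideal of the simple semigroup $S$'') would need the extra observation that $S$ is completely simple. Likewise, ``completely simple $\Rightarrow$ no proper two-sided ideals'' is not stated explicitly in Appendix~\ref{sec:sgrpskrnls}, though it follows at once from the Rees product form, since any element can be recovered from any other by multiplying on both sides within the Rees structure. What your route buys is a proof entirely at the level of finite-dimensional linear algebra plus the structure theorem for idempotent measures, in the same spirit as the Abel-limit arguments of Appendix~\ref{app:b}, rather than an appeal to the general theory in \cite{HM}.
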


\begin{remark}
See \cite{BF} for related material, including a proof of the above theorem.
\end{remark}

 \bigskip

We use the notation $\avg {\cdot}.$ to denote averaging with respect to $\lambda$ over random elements $K\in\K$. Forming
$M^{-1}\,\sum\limits_{m=1}^M$  on both sides of \eqref{eq:ces}, letting $M\to\infty$ we see immediately that
$$\Omega=\avg K.$$
the average element of the kernel. We wish to discover further properties of the kernel by considering the behavior
of tensor powers of the semigroup elements.  \bigskip

From the above discussion, we
have the following.
\begin{proposition} \label{prop:hom}
Let $\phi:\SG\to \SG'$ be a homomorphism of finite matrix semigroups. Let
$$A_\phi=\mu^{(1)}(1)\phi(C_1)+\cdots +\mu^{(1)}(d) \phi(C_d)=\sum_{w\in\SG} \mu^{(1)}(w)\,\phi(w)$$
then the Ces\`aro limit 
$$\lim_{M\to\infty} \frac{1}{M}\,\sum_{m=1}^M A_\phi^{\,m}$$
exists and equals
$$\Omega_\phi=\avg \phi(K).$$
the average over the kernel $\K$ with respect to the measure $\alpha\times\omega\times\beta$ as for $\SG$.
\end{proposition}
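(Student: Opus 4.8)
The plan is to reduce the statement to the convolution-power identity for $\mu^{(m)}$ together with the main limit theorem of \cite[Th. 2.7]{HM}; no new analytic input is required. First I would use that $\phi$, extended linearly to the semigroup algebras, is multiplicative on the sums at hand: since $\phi(w_1\cdots w_m)=\phi(w_1)\cdots\phi(w_m)$,
\begin{align*}
A_\phi^{\,m}
&=\Bigl(\sum_{w\in\SG}\mu^{(1)}(w)\,\phi(w)\Bigr)^{m}
=\sum_{w_1,\ldots,w_m\in\SG}\mu^{(1)}(w_1)\cdots\mu^{(1)}(w_m)\,\phi(w_1\cdots w_m)\\
&=\sum_{w\in\SG}\Bigl(\sum_{w_1\cdots w_m=w}\mu^{(1)}(w_1)\cdots\mu^{(1)}(w_m)\Bigr)\phi(w)
=\sum_{w\in\SG}\mu^{(m)}(w)\,\phi(w),
\end{align*}
the last equality being exactly the definition of the convolution power used in obtaining \eqref{eq:ces}. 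Equivalently, $A_\phi^{\,m}$ is the image under (the linear extension of) $\phi$ of the semigroup-algebra element $\sum_{w\in\SG}\mu^{(m)}(w)\,w$.

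Next I would average and pass to the limit termwise. Because $\SG$ is finite, $\frac1M\sum_{m=1}^{M}A_\phi^{\,m}=\sum_{w\in\SG}\bigl(\frac1M\sum_{m=1}^{M}\mu^{(m)}(w)\bigr)\,\phi(w)$ is a fixed finite linear combination of the matrices $\phi(w)$ whose scalar coefficients are precisely the Ces\`aro averages appearing in \cite[Th. 2.7]{HM}. That theorem gives $\frac1M\sum_{m=1}^{M}\mu^{(m)}(w)\to\lambda(w)$ for each $w\in\SG$, with $\lambda$ supported on $\K$ and equal to $\alpha\times\omega\times\beta$. Interchanging the finite sum with the limit yields
$$\lim_{M\to\infty}\frac1M\sum_{m=1}^{M}A_\phi^{\,m}=\sum_{w\in\SG}\lambda(w)\,\phi(w)=\sum_{K\in\K}\lambda(K)\,\phi(K)=\avg{\phi(K)}.$$
In particular the limit exists, and it is taken with respect to the same measure $\alpha\times\omega\times\beta$ on $\K$ that governs $\SG$, which is the assertion.

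There is no serious obstacle here: the argument is linearity plus the convergence already in hand, and the only point deserving a word of care is that $\phi$ need not be injective. This causes no difficulty, because the claim concerns the $\phi$-pushforward of the intrinsic limiting measure $\lambda$ on the kernel $\K\subset\SG$, and not the kernel structure of $\SG'$: the image $\phi(\SG)$ may be much smaller than $\SG'$, and $\phi(\K)$ need not be all of the kernel of $\phi(\SG)$, yet we simply transport $\lambda$ through $\phi$. For the same reason one should resist reproving a limit theorem inside $\SG'$; everything follows from the identity $A_\phi^{\,m}=\sum_{w\in\SG}\mu^{(m)}(w)\,\phi(w)$ established in the first step.
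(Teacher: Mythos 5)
Your argument is correct and is essentially the paper's own proof: the paper likewise reduces everything to the identity $A_\phi^{\,m}=\sum_{w\in\SG}\mu^{(m)}(w)\,\phi(w)$ and then invokes the Ces\`aro convergence of $\mu^{(m)}$ termwise over the finite semigroup. Your version just spells out the multiplicativity computation and the (harmless) non-injectivity point that the paper leaves implicit.
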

\begin{proof} Checking that
$$A_\phi^{\,m}=\sum_{w\in\SG} \mu^{(m)}(w)\,\phi(w)$$
the result follows immediately.
\end{proof}

$\Omega_\phi$ satisfies the relations
$$A_\phi\Omega_\phi=\Omega_\phi A_\phi=\Omega_\phi^2=\Omega_\phi\ .$$
\begin{proposition}
The rows of $\Omega_\phi$ span the set of left eigenvectors of $A_\phi$ for eigenvalue 1. The columns of $\Omega_\phi$ span
the set of right eigenvectors of $A_\phi$ for eigenvalue 1.
\end{proposition}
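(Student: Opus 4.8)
The plan is to prove each assertion by establishing two inclusions, using only the identities $A_\phi\Omega_\phi=\Omega_\phi A_\phi=\Omega_\phi^2=\Omega_\phi$ together with the fact, supplied by the preceding proposition, that $\Omega_\phi$ equals the Ces\`aro limit $\lim_{M\to\infty}M^{-1}\sum_{m=1}^M A_\phi^{\,m}$. For the left eigenvectors: the relation $\Omega_\phi A_\phi=\Omega_\phi$ says precisely that every row $v$ of $\Omega_\phi$ satisfies $vA_\phi=v$, so the span of the rows of $\Omega_\phi$ lies inside the left $1$-eigenspace of $A_\phi$. For the reverse inclusion, suppose $vA_\phi=v$. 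Iterating gives $vA_\phi^{\,m}=v$ for every $m\ge1$, hence $v\bigl(M^{-1}\sum_{m=1}^M A_\phi^{\,m}\bigr)=v$ for all $M$; letting $M\to\infty$ and using the existence of the Ces\`aro limit yields $v\,\Omega_\phi=v$. Since $v\,\Omega_\phi$ is the linear combination of the rows of $\Omega_\phi$ with coefficients the entries of $v$, this exhibits $v$ as an element of the row span. The two inclusions give equality, which is the first claim.

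The argument for the right eigenvectors is the mirror image. From $A_\phi\Omega_\phi=\Omega_\phi$, each column $\dg w.$ of $\Omega_\phi$ satisfies $A_\phi\,\dg w.=\dg w.$, so the column span of $\Omega_\phi$ is contained in the right $1$-eigenspace of $A_\phi$. Conversely, if $A_\phi\,\dg w.=\dg w.$ then $A_\phi^{\,m}\,\dg w.=\dg w.$ for all $m$, whence $\bigl(M^{-1}\sum_{m=1}^M A_\phi^{\,m}\bigr)\dg w.=\dg w.$ and, passing to the limit, $\Omega_\phi\,\dg w.=\dg w.$; this writes $\dg w.$ as a combination of the columns of $\Omega_\phi$. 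Equality of the two spaces follows.

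I do not expect a genuine obstacle here. The only point requiring a word of care is the passage of the identities $vA_\phi^{\,m}=v$ and $A_\phi^{\,m}\,\dg w.=\dg w.$ through the Ces\`aro average, and this is immediate: these are constant sequences in $m$, so every partial average equals the same constant and, the limit existing by the previous proposition, it equals that constant as well. An equivalent repackaging is to observe that $\Omega_\phi$ is idempotent, so its row (resp.\ column) span is the left (resp.\ right) kernel of $I-\Omega_\phi$, and then to identify that kernel with the $1$-eigenspace of $A_\phi$ via the same two short computations.
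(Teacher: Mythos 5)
Your proof is correct and follows essentially the same route as the paper: the key step in both is that a left (resp.\ right) invariant vector $v$ satisfies $vA_\phi^{\,m}=v$ for all $m$, so passing through the Ces\`aro average gives $v\Omega_\phi=v$, exhibiting $v$ in the row (resp.\ column) span, while the relations $\Omega_\phi A_\phi=A_\phi\Omega_\phi=\Omega_\phi$ give the reverse inclusion. You merely spell out the easy inclusion more explicitly than the paper does.
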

\begin{proof}
For a left invariant vector $v$, we have 
\begin{align*}
v=vA_\phi=vA_\phi^{\,m}=v\Omega_\phi &&\text{ (taking the Ces\`aro limit)}
\end{align*}
Thus, $v$ is a linear combination of the rows of $\Omega_\phi$. Similarly for right eigenvectors.
\end{proof}
Now, as $\Omega_\phi$ is an idempotent, we have $\rk \Omega_\phi=\tr \Omega_\phi$ so
\begin{corollary} The dimension of the eigenspace of right/left invariant vectors of $A_\phi$ equals $\tr \Omega_\phi$.
\end{corollary}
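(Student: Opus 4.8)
The plan is to derive this immediately from the proposition just proved together with the elementary fact that an idempotent matrix has equal rank and trace. First I would recall that, by that proposition, the space of left-invariant vectors of $A_\phi$ (the eigenvectors for eigenvalue $1$) is precisely the row space of $\Omega_\phi$; its dimension is therefore the row rank of $\Omega_\phi$, which equals $\rk \Omega_\phi$. Symmetrically, the space of right-invariant vectors is the column space of $\Omega_\phi$, of dimension equal to the column rank, again $\rk \Omega_\phi$. So in both cases the invariant eigenspace has dimension $\rk \Omega_\phi$.

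Second, I would use the relation $\Omega_\phi^2=\Omega_\phi$ established above. Because $\Omega_\phi$ annihilates the polynomial $x^2-x$, it is diagonalizable with every eigenvalue equal to $0$ or $1$. For a vector $v$ in the image of $\Omega_\phi$, say $v=\Omega_\phi w$, we get $\Omega_\phi v=\Omega_\phi^2 w=\Omega_\phi w=v$, so the image of $\Omega_\phi$ is exactly its $1$-eigenspace; hence the multiplicity of the eigenvalue $1$ is $\dim(\text{image of }\Omega_\phi)=\rk \Omega_\phi$. Since the trace is the sum of the eigenvalues counted with multiplicity, $\tr \Omega_\phi$ equals the number of eigenvalues equal to $1$, namely $\rk \Omega_\phi$. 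Combining with the first step, the dimension of the right/left invariant eigenspace of $A_\phi$ is $\rk \Omega_\phi=\tr \Omega_\phi$, as claimed.

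There is essentially no obstacle here: the only points requiring care are that the rows (resp. columns) of $\Omega_\phi$ genuinely span the respective eigenspaces — which is exactly the content of the cited proposition — and the standard identification of the image of an idempotent with its fixed subspace, together with row rank $=$ column rank. Both are routine, so the corollary follows in a couple of lines.
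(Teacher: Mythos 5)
Your proof is correct and follows the same route as the paper: the preceding proposition identifies the invariant eigenspaces with the row/column space of $\Omega_\phi$, and idempotency gives $\rk \Omega_\phi=\tr \Omega_\phi$ (which the paper simply cites as a standard fact, while you spell out the eigenvalue argument).
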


\begin{remark}
We will only consider mappings $\phi$ that preserve nonnegativity. 
$A_\phi$  in general will be \textit{substochastic}, i.e., it may have zero rows or rows that do not sum to one.
As noted in Appendix \ref{app:b}, the Abel limits exist for the $A_\phi$ in agreement with the Ces\`aro limits.
Abel limits are suitable for computations, e.g. with Maple or Mathematica.
\end{remark}
\end{section}

\section{Tensor hierarchy}\label{sec:tensorh}

Starting with a set $V$ of  $n$ elements, let
$F(V)=\{f\colon V\to V\}$. For a field of scalars, take the rationals, $\QQ$, as the most natural 
for our purposes and consider the vector space $\V=\QQ^n$ with $\End(\V)$ 
the space of $n\times n $ matrices acting as endomorphisms of $\V$. We have the mapping
$$ F(V)\stackrel{\Mt}{\longrightarrow} \End(\V) $$
taking $f\in F(V)$ to $\Mt(f)\in \End(\V)$ defined by

\begin{equation}\label{Xf}
(\Mt(f))_{ij}=\begin{cases} 1, & \text{if }f(i)=j\cr 0, & \text{otherwise} \end{cases}
\end{equation}

Denote by $F_\circ(V)$ the semigroup consisting of $F(V)$ with the operation of composition, 
where we compose maps to the right: for $i\in V$, $i(f_1f_2)=f_2(f_1(i))$.
The mapping $\Mt$ gives a representation of the semigroup $F_\circ(V)$ by endomorphisms of $\V$, i.e.,
$\Mt(f_1f_2)=\Mt(f_1)\Mt(f_2)$. \bigskip

For $W$ of the form $\Mt(f)$ corresponding to a function $f\in F(V)$, the resulting components of 
$\tpow{W}{k}$ are components of the map on tensors given by the $k^{\rm th}$ kronecker power of $W$.
At each level $k$, there is an induced map
$$ \End(\V) \to \End(\tpow{\V}{k})\ ,\qquad \Mt(f)\to \tpow{\Mt(f)}{k}$$
satisfying 

\begin{equation}
\tpow{(\Mt(f_1f_2))}{k}=\tpow{(\Mt(f_1)\Mt(f_2))}{k}=\tpow{\Mt(f_1)}{k}\tpow{\Mt(f_2)}{k}
\end{equation}

giving, for each $k$, a representation of the semigroup $F_\circ(V)$ as endomorphisms
of $\tpow{\V}{k}$. \bigskip

What is the function, $f_k$, corresponding to $\tpow{\Mt(f)}{k}$, i.e., such that $\Mt(f_k)=\tpow{\Mt(f)}{k}$ ?
For degree 1, we have from \eqref{Xf}
\begin{equation}
\e_i\Mt(f)=\e_{f(i)}
\end{equation}

And for the induced map at degree $k$, taking products in $\V^{\otimes k}$, for a multi-set $\rI$,

\begin{align*}
\e_{\rI}\tpow{\Mt(f)}{k} &=(\e_{i_1}\Mt(f)) \otimes (\e_{i_2}\Mt(f)) \otimes \cdots  \otimes (\e_{i_k}\Mt(f)) \cr
                                &= \e_{f(i_1)} \otimes \e_{f(i_2)} \otimes \cdots \otimes \e_{f(i_k)}
\end{align*}

We see that the degree $k$ maps are those induced on multi-subsets of $V$ mapping
$$\{i_1,\ldots,i_k\} \to \{f(i_1),\ldots,f(i_k)\}$$
with no further conditions, i.e., there is no restriction that the indices be distinct.
This thus gives the second quantization of $\Mt(f)$ corresponding to the induced map,
the second quantization of $f$, extending the domain of $f$ from $V$ to $\V^{\otimes}$, the space of all tensor powers of $\V$.\bigskip

Our main focus in this work is on the degree two component, where we have a natural correspondence
of $2$-tensors with matrices.

\subsection{The degree 2 component of $\V^{\otimes}$} \label{sec:two}

Working in degree 2, we denote indices $\rI=(i,j)$, as usual, instead of $(i_1,i_2)$.\bigskip

For given $n$, $X$, $X'$, etc., are vectors in $\tpow{\V}{2}\approx\QQ^{n^2}$.\bigskip

As a vector space, $\V$ is isomorphic to $\QQ^n$. Denote by ${\rm End}(\V)$ the space of
matrices acting on $\V$. \bigskip

{\bf Definition}\quad
The mapping  
$${\rm Mat}\colon\,\tpow{\V}{2}\to {\rm End}(\V)$$
is the linear isomorphism taking
the vector $X=(x_{ij})$ to the matrix $\tilde X$ with entries
$$\tilde X_{ij}=x_{ij}\ .$$

We will use the explicit notation ${\rm Mat}(X)$ as needed for clarity.\bigskip

Equip $\tpow{\V}{2}$ with the inner product 
$$\avg X,X'.=\tr \tilde X^*\tilde X'$$
the star denoting matrix transpose. \bigskip

Throughout, we use the convention wherein repeated Greek indices are automatically summed over.
So we write
$$\avg X,X'.=X_{\lambda\mu} X'_{\lambda\mu}\ .$$

\subsection{Basic Identities}

Multiplying $X$ with $u$, we note
\begin{equation}\label{teqtrace1}
X\dg u.=\tr \tilde X J=u\dg  X.
\end{equation}

We observe

\begin{proposition} {\it (Basic Relations)} We have\bigskip

1. $\Mat(X\tpow{A}{2}) = \symm{A}{\tilde X}$.\bigskip

2. $\Mat(\tpow{A}{2}\dg  X.) = \sym{A}{\tilde X}$.
\begin{proof} We check \#1 as \#2 is similar. The matrix $\tpow{A}{2}$ has components
$$\tpow{A}{2}_{ab,cd}=A_{ac}A_{bd}$$
And we have
\begin{align*}
(X\tpow{A}{2})_{cd}&=\sum_{a,b} x_{ab}A_{ac}A_{bd}\\ &=(A^*\tilde X A)_{cd}
\end{align*}
\end{proof}
\end{proposition}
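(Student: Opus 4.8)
The plan is to verify both identities by a direct computation at the level of tensor components, which amounts to recording the familiar fact that the induced (``second quantized'') map $A\mapsto\tpow{A}{2}$ acts on $\tpow{\V}{2}\cong\End(\V)$ by two-sided matrix multiplication, the side depending on whether one multiplies a row tensor on the right or a column tensor on the left. The single ingredient I need is the component formula for the Kronecker square, $\tpow{A}{2}_{ab,cd}=A_{ac}A_{bd}$, with the pair $(a,b)$ indexing rows and $(c,d)$ indexing columns of the $n^2\times n^2$ matrix.

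For part~1, since $X$ is treated as a row vector, the product $X\tpow{A}{2}$ contracts the row index pair of $\tpow{A}{2}$ against $X$, giving $(X\tpow{A}{2})_{cd}=\sum_{a,b}x_{ab}A_{ac}A_{bd}$. I would then read this double sum as a matrix triple product: summing out $a$ turns $A_{ac}x_{ab}$ into $(A^*\tilde X)_{cb}$, and then summing out $b$ against $A_{bd}$ produces $(A^*\tilde X A)_{cd}$. Since $\Mat$ merely reinterprets the components $x_{cd}$ as matrix entries, this yields $\Mat(X\tpow{A}{2})=A^*\tilde X A=\symm{A}{\tilde X}$.

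For part~2 I would run the mirror computation: $\tpow{A}{2}\dg X.$ is a column vector obtained by contracting the column index pair of $\tpow{A}{2}$ against $X$, so $(\tpow{A}{2}\dg X.)_{ab}=\sum_{c,d}A_{ac}A_{bd}x_{cd}=(A\tilde X A^*)_{ab}$, hence $\Mat(\tpow{A}{2}\dg X.)=A\tilde X A^*=\sym{A}{\tilde X}$. Alternatively, part~2 can be deduced formally from part~1: the index swap $(i,j)\mapsto(j,i)$ on $\tpow{\V}{2}$ corresponds under $\Mat$ to transposition $\tilde X\mapsto\tilde X^*$ and commutes with $\tpow{A}{2}$, while transposing a right/row-vector multiplication turns it into a left/column-vector multiplication; this converts the formula of~1 into that of~2.

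The computation is routine, and the only place that requires care is the index bookkeeping: one must keep straight which index pair of $\tpow{A}{2}$ plays the role of ``rows'' (so a row-vector multiplication on the left contracts rows, a column-vector multiplication on the right contracts columns), and track where each transpose lands. The transpose on the first factor arises precisely because the summation index sits in the first slot of $A_{ac}$ (respectively the second slot of $A_{bd}$). There is no conceptual obstacle beyond this consistency of conventions.
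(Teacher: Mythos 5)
Your computation for part~1 is exactly the paper's argument (same Kronecker component formula $\tpow{A}{2}_{ab,cd}=A_{ac}A_{bd}$ and the same contraction to $(A^*\tilde X A)_{cd}$), and your explicit mirror computation for part~2 is just the ``similar'' step the paper leaves to the reader. The proposal is correct and takes essentially the same route.
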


We see immediately 

\begin{proposition}\label{tpropXX} For any $A$ and $X$,  \bigskip
\begin{align*}
  \tilde X = \symm{A}{\tilde X}  &\Leftrightarrow  X\tpow{A}{2} = X\\
  \tilde X = \sym{A}{\tilde X}  &\Leftrightarrow  \tpow{A}{2}\dg  X. = \dg  X.\\
\end{align*}
\end{proposition}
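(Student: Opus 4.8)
The plan is to derive Proposition~\ref{tpropXX} as a direct corollary of the Basic Relations proposition just established, reading off the two equivalences from the two stated identities. First I would recall that by Basic Relation \#1 we have $\Mat(X\tpow{A}{2}) = A^*\tilde X A$ (the expression denoted $\symm{A}{\tilde X}$), and by \#2 we have $\Mat(\tpow{A}{2}\dg X.) = A\tilde X A^*$ (the expression $\sym{A}{\tilde X}$). Since $\Mat$ is a linear isomorphism from $\tpow{\V}{2}$ onto $\End(\V)$, two tensors are equal if and only if their images under $\Mat$ are equal; in particular $X\tpow{A}{2}=X$ holds in $\tpow{\V}{2}$ exactly when $\Mat(X\tpow{A}{2})=\Mat(X)$ holds in $\End(\V)$, i.e. exactly when $A^*\tilde X A = \tilde X$. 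That is precisely the first equivalence. The second follows the same way using the column-vector identity $\Mat(\tpow{A}{2}\dg X.)=\Mat(X)$ $\iff$ $A\tilde X A^* = \tilde X$, after noting that $\Mat(\dg X.)$ and $\Mat(X)$ name the same matrix $\tilde X$ (the $\dg\cdot.$ notation merely records that we are multiplying on the left by the column form).

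Concretely, the proof would be two short lines. For the first line: apply $\Mat$, which is injective, to the equation $X\tpow{A}{2}=X$, obtaining $\symm{A}{\tilde X}=\tilde X$ by Basic Relation \#1; conversely, if $\symm{A}{\tilde X}=\tilde X$ then applying $\Mat^{-1}$ and Basic Relation \#1 again gives $X\tpow{A}{2}=X$. For the second line, repeat verbatim with Basic Relation \#2 and the column form $\dg X.$, giving $\tpow{A}{2}\dg X.=\dg X. \iff \sym{A}{\tilde X}=\tilde X$. No computation beyond invoking the previous proposition is needed.

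I do not anticipate a genuine obstacle here; the only point requiring care is a bookkeeping one, namely making explicit that "$\tilde X$" appearing on the left of each displayed equivalence is literally $\Mat(X)=\Mat(\dg X.)$, so that the injectivity of $\Mat$ can be applied cleanly. If one wanted to avoid even mentioning $\Mat$, an alternative is to expand both sides in coordinates using $\tpow{A}{2}_{ab,cd}=A_{ac}A_{bd}$ as in the proof of the Basic Relations and compare, but that simply re-proves the preceding proposition and is strictly more work; the isomorphism argument is the natural one. I would therefore present the short version, citing the Basic Relations proposition and the fact that $\Mat$ is a linear isomorphism.
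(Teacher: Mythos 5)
Your proof is correct and is exactly the argument the paper intends: the proposition is stated as an immediate consequence of the Basic Relations, read off through the linear isomorphism $\Mat$, which is precisely what you do (including the minor bookkeeping point that $\Mat(X)=\tilde X$ whether $X$ appears in row or column form). No discrepancies to note.
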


\subsection{Trace Identities}

Using equation (\ref{teqtrace1}), we will find some  identities for traces of these quantities.\bigskip

\begin{proposition}\label{tpropTR} We have \bigskip

1. $\displaystyle X\tpow{A}{2}\dg  u. = \tr (\tilde X AJA^*)$ .\bigskip

2. $\displaystyle u\tpow{A}{2}\dg  X. = \tr (\tilde X A^*JA)$ .\bigskip

3. If $A$ is stochastic, then \ $\displaystyle X\tpow{A}{2}\dg  u. = \tr (\tilde XJ)$.
\end{proposition}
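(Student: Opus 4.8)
The plan is to prove the three parts of Proposition~\ref{tpropTR} by combining the Basic Relations with the trace identity \eqref{teqtrace1}. First I would establish part~1. By part~1 of the Basic Relations, $\Mat(X\tpow{A}{2}) = A^*\tilde X A$, so applying \eqref{teqtrace1} to the vector $X\tpow{A}{2}$ in place of $X$ gives
$$X\tpow{A}{2}\dg u. = \tr\bigl((A^*\tilde X A)\,J\bigr).$$
Then I would use cyclicity of the trace to rewrite $\tr(A^*\tilde X A J) = \tr(\tilde X A J A^*)$, which is the desired identity. The computation is routine; the only thing to be careful about is matching conventions (row vs.\ column vectors, and the direction in which $\Mat$ acts), but everything needed is already in \eqref{teqtrace1} and the preceding proposition.

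Part~2 is entirely parallel, using part~2 of the Basic Relations, $\Mat(\tpow{A}{2}\dg X.) = A\tilde X A^*$. Applying the other half of \eqref{teqtrace1} — namely $u\dg Y. = \tr(\tilde Y J)$ — to $Y = \tpow{A}{2}\dg X.$ yields $u\tpow{A}{2}\dg X. = \tr(A\tilde X A^* J)$, and cyclicity converts this to $\tr(\tilde X A^* J A)$. I would present parts~1 and~2 together to avoid repetition, noting that~2 follows ``similarly'' or ``by transposition''.

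For part~3, I would specialize part~1 to the case where $A$ is stochastic. The key observation is that a stochastic matrix fixes the all-ones column vector: $A\dg u. = \dg u.$, hence $AJ = A\dg u.u = \dg u.u = J$, and likewise $JA^* = (AJ)^* $ wait — more directly, from $AJ = J$ we also get, by taking transposes, $J A^* = (A J)^* = J^* = J$. Substituting $AJA^* = J$ into part~1 gives $X\tpow{A}{2}\dg u. = \tr(\tilde X J)$.

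I do not anticipate a genuine obstacle here — the proposition is a short formal consequence of results already proved. If anything, the one point requiring a moment's attention is the identity $AJ = J$ for stochastic $A$: it depends on the convention that ``stochastic'' means \emph{row}-stochastic (row sums equal~$1$), so that $A$ acting on the right of row vectors, or equivalently $A\dg u. = \dg u.$, holds; one should state this explicitly so the reader sees that $AJA^* = J$ uses stochasticity on both the left and right factors. With that remark in place, all three identities drop out immediately.
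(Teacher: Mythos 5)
Your proposal is correct and follows essentially the same route as the paper: apply the identity \eqref{teqtrace1} to the vector $X\tpow{A}{2}$ (resp.\ $\tpow{A}{2}\dg X.$), substitute the Basic Relations $\Mat(X\tpow{A}{2})=A^*\tilde XA$ and $\Mat(\tpow{A}{2}\dg X.)=A\tilde XA^*$, and rearrange cyclically inside the trace, with part~3 coming from $AJ=J=JA^*$ for stochastic $A$. Your explicit remark about row-stochasticity and the transpose argument for $JA^*=J$ is a fine clarification but does not change the argument.
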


\begin{proof} We have, using equation (\ref{teqtrace1}) and Basic Relation 1,
\begin{align*}
X\tpow{A}{2}\dg  u.  &= \tr \Mat(X\tpow{A}{2})J\\
 &= \tr (\symm{A}{\tilde X}J) 
 \end{align*}
and rearranging terms inside the trace yields \#1. Then \#3 follows since $A$ stochastic implies $AJ=J=JA^*$.
And \#2 follows similarly, using the second Basic Relation in the equation
$\displaystyle u\tpow{A}{2}\dg  X. = \tr \Mat(\tpow{A}{2}\dg  X.)J$. \par
\hfill\qedhere
\end{proof}

Using  equation (\ref{teqtrace1}) directly for $X$, we have \bigskip
\begin{align*}
X(I-\tpow{A}{2})\dg  u.  &=  \tr (\tilde X (J-AJA^*))\\
u(I-\tpow{A}{2})\dg  X.  &=  \tr (\tilde X (J-A^*JA))
\end{align*}
And the first line reduces to 
$$X(I-\tpow{A}{2})\dg  u.  = 0$$
for stochastic $A$.  \bigskip

\begin{subsection}{Convergence to tensor hierarchy}
\begin{proposition} \label{prop:tens}
For $\l\ge 1$, define
$$A_{\otimes\l}=\frac{1}{d} \tpow{C_1}{\l}+\cdots +\frac{1}{d} \tpow{C_d}{\l}\ .$$
Then the Ces\`aro limit of the powers $A_{\otimes\l}^{\,m}$ exists and equals 
$$\Omega_{\otimes\l}=\avg \tpow{K}{\l}.$$
the average taken over the kernel $\K$ of the semigroup $\SG$ generated by $\{C_1,\ldots,C_d\}$.
\end{proposition}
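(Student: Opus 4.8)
The plan is to recognize that Proposition \ref{prop:tens} is nothing but Proposition \ref{prop:hom} applied to a specific homomorphism. First I would define $\phi_\l:\SG\to\End(\tpow{\V}{\l})$ by $\phi_\l(w)=\tpow{w}{\l}$, i.e., the $\l$-th Kronecker power map. By the identity $\tpow{(\Mt(f_1)\Mt(f_2))}{\l}=\tpow{\Mt(f_1)}{\l}\tpow{\Mt(f_2)}{\l}$ established in Section \ref{sec:tensorh}, the map $\phi_\l$ is a homomorphism of finite matrix semigroups. Moreover, since each $C_i$ is a binary stochastic (hence nonnegative) matrix, the Kronecker powers $\tpow{C_i}{\l}$ are again nonnegative, so $\phi_\l$ preserves nonnegativity in the sense required by the remark following the corollary.

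Next I would simply observe that $A_{\otimes\l}$ in the statement coincides with $A_{\phi_\l}$ of Proposition \ref{prop:hom}: indeed
$$A_{\phi_\l}=\sum_{i=1}^d \mu^{(1)}(i)\,\phi_\l(C_i)=\frac1d\tpow{C_1}{\l}+\cdots+\frac1d\tpow{C_d}{\l}=A_{\otimes\l}\ .$$
Proposition \ref{prop:hom} then yields that the Ces\`aro limit $\lim_{M\to\infty}\frac1M\sum_{m=1}^M A_{\otimes\l}^{\,m}$ exists and equals $\Omega_{\phi_\l}=\avg{\phi_\l(K)}.=\avg{\tpow{K}{\l}}.$, the average over the kernel $\K$ with respect to $\lambda=\alpha\times\omega\times\beta$, which is exactly $\Omega_{\otimes\l}$. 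This completes the argument.

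I do not anticipate a genuine obstacle here; the content is a bookkeeping reduction to the already-proven Proposition \ref{prop:hom}. The only point requiring a word of care is confirming that $\phi_\l$ lands in a semigroup of matrices on which the earlier framework applies — but this is immediate since $\End(\tpow{\V}{\l})$ is just the matrix algebra $\QQ^{n^\l\times n^\l}$ and the image $\phi_\l(\SG)$ is a finite matrix semigroup. One could alternatively give a self-contained proof mirroring the proof of Proposition \ref{prop:hom}, checking directly that $A_{\otimes\l}^{\,m}=\sum_{w\in\SG}\mu^{(m)}(w)\,\tpow{w}{\l}$ by the homomorphism property and then passing to the Ces\`aro limit using the main limit theorem; but invoking Proposition \ref{prop:hom} is the cleanest route.
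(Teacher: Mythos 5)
Your proposal is correct and is exactly the route the paper intends: Proposition \ref{prop:tens} is the specialization of Proposition \ref{prop:hom} to the homomorphism $w\mapsto \tpow{w}{\l}$, whose multiplicativity is the Kronecker-power identity established in Section \ref{sec:tensorh}. The paper in fact states the proposition without a separate proof precisely because it follows immediately from Proposition \ref{prop:hom} in this way, so your bookkeeping reduction (including the check that $A_{\phi_\l}=A_{\otimes\l}$) supplies the intended argument.
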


We imagine the family $\{\tpow{\K}{\l}\}_{\l\ge1}$, as a hierarchy of kernels corresponding to the family of semigroups
generated by tensor powers $\tpow{C_i}{\l}$ in each level $\l$. Since each element $k\in\K$ is a stochastic matrix, in fact
a binary stochastic matrix, as each corresponds to a function, we have $kJ=J$ at every level. We thus have a mechanism to 
move down the hierarchy by the relation
$$\tpow{k}{\l}(J\otimes \tpow{I}{(\l-1)})=J\otimes \tpow{k}{(\l-1)}$$
which is composed of identical blocks $\tpow{k}{(\l-1)}$. \bigskip

Note that if $\rk \Omega_{\otimes\l}=1$, the invariant distribution can be computed as an element of the nullspace
of $I-\tpow{A}{\l}$. Otherwise, $\Omega_{\otimes\l}$ can be found computationally as the Abel limit 
$$\lim_{s\uparrow 1}(1-s) (I-s\tpow{A}{\l})^{-1}\ .$$

\end{subsection}

\begin{section}{The principal observables: $M$ and $N$ operators}
\subsection{Graph-theoretic context}
For the remainder of this paper, we work mainly in the context where the stochastic matrix $A=\frac12 \A$, with $\A$ the adjacency matrix of 
a 2-out regular digraph, assumed strongly connected and aperiodic. In other words, $A$ is an ergodic transition matrix for the Markov chain
induced on the vertices, $V$, of the graph. $A$ decomposes into two coloring matrices, which we call from now on $R$ and $B$, for
``red" and ``blue", respectively. Thus
$$A=\tfrac12(R+B)$$
where $R$ and $B$ are binary $n\times n$ stochastic matrices.  \bigskip

\begin{example} For a working example, consider
$$A= \left[ \begin {array}{cccc} 0&0&1/2&1/2\\0&0&1/2&1/2\\ 1/2&0&0&1/2\\ 0&1/2&1/2&0
\end {array} \right]\ .$$
The powers of $A$ converge to the idempotent matrix
$$\Omega= \left[ \begin {array}{cccc} 1/6&1/6&1/3&1/3\\1/6&1/
6&1/3&1/3\\1/6&1/6&1/3&1/3\\1/6&1/6&1/3&1/3\end {array} \right]  \ .$$

As one possible decomposition, we may take
$$R=\left[ \begin {array}{cccc} 0&0&0&1\\0&0&1&0\\ 1&0&0&0\\ 0&1&0&0\end {array} \right]\,,\qquad
B=\left[ \begin {array}{cccc} 0&0&1&0\\0&0&0&1\\ 0&0&0&1\\ 0&0&1&0\end {array} \right]$$
In the notation used previously, we write 
$$ R=[4312]\qquad\text{and}\qquad B=[3443]\ .$$
\end{example}
\subsection{Level 2 of the tensor hierarchy}
Let 
$$A_{\otimes 2}=\tfrac12(\tpow{R}{2}+\tpow{B}{2})$$
Then fixed points $X$ of $A_{\otimes 2}$ are vectors corresponding to solutions to the matrix equations
$$\half\,(\sym{R}{\tilde X}+\sym{B}{\tilde X})=\tilde X$$
and
$$\half\,(\symm{R}{\tilde X}+\symm{B}{\tilde X})=\tilde X$$

\subsubsection{$M$ and $N$ operators}
In Section \ref{sec:two}, we defined the map from vectors to matrices $X\to \tilde X=\Mat(X)$. Now it is convenient
to have a reverse map from matrices to vectors, say 
$$Y \to Y_{\rm vec}$$
where here we use $Y$ to denote a typical matrix to distinguish our conventional use of $X$ as a vector. \bigskip

Denote  by $\N$ the space of solutions to $$\half\,(\sym{R}{Y}+\sym{B}{Y})=Y \ .$$ \par
The nonnegative solutions denote by $\N_+$, and the space of nonnegative solutions with trace zero by $\N_0$.  \bigskip

Similarly, $\M$ denotes the space of solutions to $$\half\,(\symm{R}{Y}+\symm{B}{Y})=Y$$
with $\M_+$ nonnegative solutions, and $\M_0$, nonnegative trace zero solutions. \bigskip

Starting from the relation
$$\Omega_{\otimes 2}=\avg\tpow{K}{2}.$$
We have solutions in the space $\M$ in the form
$$X\Omega_{\otimes 2}=\avg K^* \tilde X K.=\sumprime k^* \tilde X k$$
and solutions in $\N$ in the form
$$\Omega_{\otimes 2}X=\avg \sym{K}{ \tilde X}.=\sumprime \sym{k}{\tilde X}$$
where the primed summation denotes an averaged sum, here with respect to the measure $\lambda$ on $\K$.

\begin{subsubsection}{Diagonal of $N\in\N$}
For any $Y$, observe that
$$(\sym{R}{Y})_{ij}=R_{i\lambda} Y_{\lambda\mu}R_{j\mu}=Y_{iR\,jR}$$
since $R$ acts as a function on the indices.  Similarly for $B$.\bigskip

\begin{proposition}\label{prop:diagn}
Let $N\in\N_+$. Then the diagonal of $N$ is constant.
\end{proposition}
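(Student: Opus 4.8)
The plan is to exploit the fixed-point equation $Y = \tfrac12(\sym{R}{Y} + \sym{B}{Y})$ together with the index-level formula just recorded, namely $(\sym{R}{Y})_{ij} = Y_{iR,\,jR}$ (and similarly for $B$). Restricting the fixed-point equation to the diagonal, $i = j$, gives
$$N_{ii} = \tfrac12\bigl(N_{iR,\,iR} + N_{iB,\,iB}\bigr),$$
so each diagonal entry of $N$ is the average of the diagonal entries indexed by $iR$ and $iB$. In other words, the diagonal vector $\delta = (N_{ii})$ is a fixed point of the stochastic matrix $A = \tfrac12(R+B)$ acting on $V$: writing $\delta$ as a function on vertices, $\delta(i) = \tfrac12(\delta(iR) + \delta(iB)) = (A\dg\delta.)_i$, i.e. $A\dg\delta. = \dg\delta.$.

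Now I would invoke ergodicity. Since $A$ is irreducible and aperiodic (the standing assumption on the 2-out digraph), the only right eigenvectors of $A$ with eigenvalue $1$ are the constant vectors — equivalently, $\lim_m A^m = \Omega$ has rank one with all rows equal to $\pi$, so $\dg\delta. = A^m\dg\delta. \to \Omega\dg\delta.$, a constant vector. Hence $\delta$ is constant, which is exactly the claim that the diagonal of $N$ is constant. The nonnegativity hypothesis $N \in \N_+$ is not strictly needed for this argument — constancy already follows from $N \in \N$ — but it is harmless to state, and in the broader development it guarantees the common diagonal value is $\ge 0$ and lets one normalize.

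The only subtlety, and the step I would be most careful about, is the reduction to the scalar equation: one must be sure that the diagonal restriction of the fixed-point equation really does close up on the diagonal, i.e. that $(\sym{R}{Y})_{ii}$ depends on $Y$ only through the single entry $Y_{iR,\,iR}$ and not through off-diagonal entries. This is immediate from the displayed identity $(\sym{R}{Y})_{ij} = Y_{iR,\,jR}$ with $i=j$, since $R$ is the matrix of a function and therefore $iR$ is a single well-defined vertex; so there is genuinely no obstacle here, only the need to cite that identity cleanly. Everything else is the standard Perron–Frobenius fact about eigenvalue-$1$ eigenvectors of an ergodic stochastic matrix, which is already in force from the graph-theoretic setup.
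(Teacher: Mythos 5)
Your proof is correct, and it rests on exactly the same key identity as the paper's: restricting the fixed-point equation to the diagonal gives $2N_{ii}=N_{iR\,iR}+N_{iB\,iB}$, so the diagonal is a harmonic (right-fixed) vector of $A$. Where you differ is only in the final step: the paper finishes with a self-contained maximum-principle argument --- take a vertex where the diagonal is maximal, observe that the average condition forces the maximum to propagate along both the red and blue edges, and use strong connectivity to reach every vertex --- whereas you quote the Perron--Frobenius/ergodicity fact that an irreducible (indeed aperiodic) stochastic matrix has only constant right eigenvectors for eigenvalue $1$, via $A^m\to\Omega$ of rank one. The two endings buy slightly different things: the paper's propagation argument is elementary, needs only irreducibility, and is essentially a proof of the eigenspace fact in this special case; yours is shorter given the standing assumptions but imports the spectral fact as a black box. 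Your side remark is also consistent with the paper: neither argument actually uses the nonnegativity of $N$, so constancy of the diagonal holds for all of $\N$, with $\N_+$ only guaranteeing the common value is nonnegative.
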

\begin{proof}
Assume that $\displaystyle N_{11}=\max_i N_{ii}$, is the largest diagonal entry. Start with 
$$2\,N_{11}=N_{1R\,1R}+N_{1B\,1B}$$
Since $N_{11}$ is maximal, both terms on the right-hand side agree with $N_{11}$. Generally,
if $N_{ii}=N_{11}$ then $N_{iR\, iR}=N_{iB\,iB}=N_{11}$. Proceeding inductively yields $N_{ii}=N_{11}$ for all $i$ since
the graph is strongly connected.
\end{proof}
\end{subsubsection}

\begin{subsubsection}{Diagonal of $M \in \M$}
Define the diagonal matrix $\omega=\diag{\pi}$, where $\pi$ is the invariant distribution for $A$.\bigskip
\begin{proposition}\label{prop:diagm}
Let $M\in\M_+$. Then the diagonal of $M$ is a scalar multiple of $\omega$.
\end{proposition}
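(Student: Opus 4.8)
The plan is to mirror the structure of the proof of Proposition~\ref{prop:diagn}, but working with the ``forward'' action $\symm{A}{Y}$ rather than the ``backward'' action $\sym{A}{Y}$, and keeping track of the measure $\pi$. First I would record the analogue of the index computation used for $N$: since $R$ acts as a function on indices, $(\symm{R}{Y})_{ij}=R_{\lambda i}Y_{\lambda\mu}R_{\mu j}=\sum_{a\in R^{-1}(i),\,b\in R^{-1}(j)}Y_{ab}$, and similarly for $B$. Restricting to the diagonal, the defining equation $\tfrac12(\symm{R}{M}+\symm{B}{M})=M$ gives, for each $i$,
$$2\,M_{ii}=\sum_{a,b\in R^{-1}(i)} M_{ab}+\sum_{a,b\in B^{-1}(i)} M_{ab}.$$

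Next I would argue that the off-diagonal terms in those sums contribute nothing. The natural way to see this is to test against the column vector $\dg u.$: since $R,B$ are stochastic, Proposition~\ref{tpropTR}(3) (or the displayed relation $X(I-\tpow{A}{2})\dg u.=0$ right after it) tells us that any fixed point $X$ of $A_{\otimes 2}$ satisfies $X\dg u.=\tr(\tilde XJ)=\sum_{ab}M_{ab}$, which is merely consistent, not immediately conclusive. The cleaner route is to use nonnegativity together with a maximum-type argument directly on a suitably normalized diagonal. Set $m_i=M_{ii}/p_i$ (well-defined once one knows $p_i>0$, which holds since $A$ is irreducible) and suppose $m_1=\max_i m_i$. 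The key inequality to extract is
$$2\,p_1 m_1 = 2\,M_{11}\ \ge\ \sum_{a\in R^{-1}(1)}M_{aa}+\sum_{a\in B^{-1}(1)}M_{aa}\ =\ \sum_{a\in R^{-1}(1)}p_a m_a+\sum_{a\in B^{-1}(1)}p_a m_a,$$
where the inequality drops the (nonnegative) off-diagonal entries. Because $A=\tfrac12(R+B)$ is stochastic with invariant distribution $\pi$, one has $\sum_{a\in R^{-1}(1)}p_a+\sum_{a\in B^{-1}(1)}p_a = 2\sum_a p_a A_{a1}=2p_1$, so the right-hand side is a weighted average of the $m_a$ with total weight $2p_1$; maximality of $m_1$ forces $m_a=m_1$ for every $a$ with $A_{a1}>0$, and it also forces the discarded off-diagonal terms to vanish. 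Iterating along edges of the (strongly connected) graph propagates $m_a=m_1$ to all vertices, giving $M=m_1\,\omega$ as claimed.

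The step I expect to be the main obstacle is justifying cleanly that the dropped off-diagonal entries are genuinely zero and that the weights really do sum correctly to $2p_1$; both hinge on combining the stationarity identity $\pi A=\pi$ with the binary-stochastic structure of $R$ and $B$, and one must be careful that $R^{-1}(i)$ and $B^{-1}(i)$ may overlap, so the bookkeeping of $a\in R^{-1}(1)$ versus $a\in B^{-1}(1)$ should be done with multiplicity (equivalently, summing $R_{a1}+B_{a1}=2A_{a1}$). Once the equality case in the averaging inequality is pinned down, the strong-connectivity propagation is routine and identical in spirit to the last line of the proof of Proposition~\ref{prop:diagn}.
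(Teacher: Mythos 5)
Your setup is correct down to the displayed identity $2M_{ii}=\sum_{a,b\in R^{-1}(i)}M_{ab}+\sum_{a,b\in B^{-1}(i)}M_{ab}$, and the weight bookkeeping $\sum_a(R_{a1}+B_{a1})p_a=2(\pi A)_1=2p_1$ is also right. The gap is in the extremal step: you take $m_1=\max_i M_{ii}/p_i$ and drop the nonnegative off-diagonal terms to get $2p_1m_1\ge\sum_a 2A_{a1}p_a m_a$. But maximality of $m_1$ yields the \emph{same} inequality, $\sum_a 2A_{a1}p_a m_a\le 2p_1m_1$, so your two bounds point in the same direction and nothing forces equality; the off-diagonal entries you discarded are exactly what can absorb the slack when some $m_a<m_1$ with $A_{a1}>0$, so no contradiction arises and neither $m_a=m_1$ nor the vanishing of the off-diagonal terms follows. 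This is the essential difference from Proposition \ref{prop:diagn}: there the diagonal identity $2N_{ii}=N_{iR\,iR}+N_{iB\,iB}$ has no off-diagonal remainder and exactly two terms on the right, so the maximum principle closes; here it does not.

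Two repairs. (i) Run your argument at the \emph{minimum}: if $m_{i_0}=\min_i M_{ii}/p_i$, then $2p_{i_0}m_{i_0}\ge\sum_a 2A_{a i_0}p_a m_a\ge 2p_{i_0}m_{i_0}$, which squeezes to equality, kills the off-diagonal remainder at $i_0$, forces $m_a=m_{i_0}$ for every in-neighbour $a$, and then propagates by strong connectivity. (ii) The paper's route is global rather than local: writing $D_i=M_{ii}$, the diagonal identity reads $D_i-(DA)_i=\tfrac12(\text{nonnegative off-diagonal terms})$; summing over $i$ and using stochasticity of $A$ makes the left-hand side vanish, hence every nonnegative remainder vanishes and $D=DA$, whence $D$ is a multiple of $\pi$ by irreducibility. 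Either fix is short, but as written your proof does not go through.
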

\begin{proof}
Denote the diagonal of $M$ by $D$, i.e., $M_{ii}=D_i$. Start with
$$2\,M_{ij}=(\symm{R}{M}+\symm{B}{M})_{ij}=R_{\lambda i} M_{\lambda\mu}R_{\mu j}+B_{\lambda i} M_{\lambda\mu}B_{\mu j}$$
Let $\bar\delta_{ij}=1-\delta_{ij}$, i.e., one if $i\ne j$, zero otherwise. Take $i=j$ and split off the diagonal terms on the right to get
\begin{align*}
2\,D_i &=R_{\lambda i} D_{\lambda}R_{\lambda i}+B_{\lambda i} D_{\lambda}B_{\lambda i}
              +R_{\lambda i} M_{\lambda\mu}R_{\mu i}\bar\delta_{\lambda\mu}+B_{\lambda i} M_{\lambda\mu}B_{\mu i}\bar\delta_{\lambda\mu}\\
&=R_{\lambda i} D_{\lambda}+B_{\lambda i} D_{\lambda}+R_{\lambda i} M_{\lambda\mu}R_{\mu i}\bar\delta_{\lambda\mu}+B_{\lambda i} M_{\lambda\mu}B_{\mu i}\bar\delta_{\lambda\mu}\\
&=2A_{\lambda i} D_{\lambda}+R_{\lambda i} M_{\lambda\mu}R_{\mu i}\bar\delta_{\lambda\mu}+B_{\lambda i} M_{\lambda\mu}B_{\mu i}\bar\delta_{\lambda\mu}
\end{align*}
using the fact that the entries of $R$ and $B$ are $0$'s and $1$'s. Thus, 
$$D_i-(DA)_i=\half\,(R_{\lambda i} M_{\lambda\mu}R_{\mu i}\bar\delta_{\lambda\mu}+B_{\lambda i} M_{\lambda\mu}B_{\mu i}\bar\delta_{\lambda\mu})$$
Now sum both sides over $i$. Since $A$ is stochastic, $\sum (DA)_i=\sum D_i$ and the left-hand side vanishes. Since the right-hand side is a 
sum of nonnegative terms adding to zero, the right-hand side is identically zero. Thus,
$$D=DA$$
is a left fixed point of $A$. By irreducibility, it must be a multiple of $\pi$.
\end{proof}
\end{subsubsection}

\begin{subsection}{Specification of operators $N$ and $M$}
In the following, we will use $M$ and $N$ to refer to the specific operators defined by
$$M=\Mat(u\Omega_{\otimes 2})=\avg K^* J K. $$
and
$$N=\Mat(\Omega_{\otimes 2}I_{\rm vec})=\avg KK^* . $$

Another special operator in $\M$ is defined as
$$\tilde M=\Mat(\Omega_{\rm vec}\Omega_{\otimes 2})=\avg K^* \Omega K. $$

We can pass to trace-zero operators by subtracting off the diagonals defining
\begin{align}\label{eq:MNdef}
M_0&=M-\tau\omega\\
N_0&=J-N
\end{align}
For now, $\tau$ is defined by the above relation. We will see that the diagonal of $N$ is $\diag{u}=I$. The reason for
complementing with respect to $J$, i.e. subtracting from 1 everywhere will become clear after Proposition \ref{prop:eestar}.\bigskip

Note that at level 1, 
\begin{align}\label{eq:upi}
u\Omega&=u\ud\pi=n\pi=n\pi A\\
\Omega \ud&=\ud =A\ud \label{eq:upii}
\end{align}
So for level two, we define the fields 
\begin{align*}
\pi_{\otimes 2}=u\Omega_{\otimes 2}\\
u_{\otimes 2}^\dagger=\Omega_{\otimes 2}I_{\rm vec}
\end{align*}
with analogous expressions for each level $\l$. Indeed, at level two these satisfy
\begin{align*}
\pi_{\otimes 2}A_{\otimes 2}=\pi_{\otimes 2}\\
A_{\otimes 2}u_{\otimes 2}^\dagger=u_{\otimes 2}^\dagger
\end{align*}
extending relations \eqref{eq:upi} and \eqref{eq:upii}. The families $\{u_{\otimes \l}\}$ and $\{\pi_{\otimes \l}\}$ provide
extensions to scalar fields the basic all-ones vector $u$ and invariant distribution $\pi$ of level one.
\end{subsection}
\begin{subsection}{Basic level two relations}
In detailing the structure of the kernel, we need notations corresponding to the range classes.
Given an idempotent $e\in\K$, define $\utilde\rho(e)$ to be the \textsl{state vector} of the range of $e$,
namely, it is a $0$\,-$1$ vector with $\utilde\rho(e)_i=1$ exactly when $ie=e$. The corresponding matrix
$$\rho(e)=\diag{\utilde\rho(e)}$$
\begin{proposition}\label{prop:level2}
We have the relations:
\begin{align*}
\Mat(\Omega_{\rm vec}\Omega_{\otimes 2})&=\tilde M=\frac{n}{r^2}\,\avg\utilde\rho^\dagger \utilde\rho.
 \qquad &\qquad \Mat(\Omega_{\otimes 2}\Omega_{\rm vec})&= \Omega\Omega^*=(\textstyle\sum p_i^2)\,J\\
\Mat(I_{\rm vec}\Omega_{\otimes 2})&= \avg K^*K.=n\omega\qquad &\qquad \Mat(\Omega_{\otimes 2}I_{\rm vec})&=N\\
\Mat(J_{\rm vec}\Omega_{\otimes 2})&= M\qquad &\qquad \Mat(\Omega_{\otimes 2}J_{\rm vec})&= J\\
\end{align*}
{\rm Note we are writing $J_{\rm vec}$ for parallel formulation as an alternative for $u$. }\bigskip

\begin{proof} We will give the proof of the first relation later, as it relies on Friedman's Theorem. Consider
\begin{align*}
\Mat(\Omega_{\otimes 2}\Omega_{\rm vec})&=\avg \sym{K}{ \Omega}.=\avg \Omega K^*.\\
&=\Omega\Omega^*=\ud\pi\pi^\dagger u=(\pi\pi^\dagger)\ud u
\end{align*}
where we use the fact that elements of $\K$ are stochastic matrices. Next,
observe that $k^*k$ is always diagonal. Indeed
$$(k^*k)_{ij}=k_{\lambda i}k_{\lambda j}$$
indicates a summation over $\lambda$ such that $k$ maps $\lambda$ to both $i$ and $j$. In fact it is the size of the block of the
partition that $k$ maps into $i$ when $i=j$. Furthermore, for any vector $v$, we have the tautology
$$\diag{v}=\diag{\diag{v}\ud}$$
We have
$$\Mat(I_{\rm vec}\Omega_{\otimes 2})=\avg K^* K. $$
and we compute
$$\avg K^*K{\dg  u.}.=\avg K^*{\dg  u.}.=\Omega^*\dg  u.=n\,\dg  \pi.\ .$$
hence the result. And
$$\Mat(\Omega_{\otimes 2}I_{\rm vec})=\avg \sym{K}{}.=N$$
by our definition. We have
$$\Mat(J_{\rm vec}\Omega_{\otimes 2})=\avg K^* J K.$$
the definition of $M$. And finally,
$$\Mat(\Omega_{\otimes 2}J_{\rm vec})=\avg \sym{K}{J}.=J$$
immediately.
\end{proof}
\end{proposition}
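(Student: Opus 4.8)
The plan is to verify each of the six displayed matrix identities in Proposition~\ref{prop:level2} by the same routine mechanism: translate the left-hand side, a product of a vector (one of $\Omega_{\rm vec}$, $I_{\rm vec}$, $J_{\rm vec}$) with $\Omega_{\otimes 2}$ in either order, into an average over the kernel using $\Omega_{\otimes 2}=\avg{\tpow{K}{2}}.$ together with the Basic Relations (Proposition on $\Mat(X\tpow{A}{2})$ and $\Mat(\tpow{A}{2}\dg X.)$) applied with $A=K$. Concretely, $\Mat(X\,\Omega_{\otimes 2})=\avg{K^*\tilde X K}.$ and $\Mat(\Omega_{\otimes 2}\dg X.)=\avg{K\tilde X K^*}.$, so each identity reduces to evaluating $\avg{K^* \tilde X K}.$ or $\avg{K \tilde X K^*}.$ for $\tilde X\in\{\Omega,\,I,\,J\}$. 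I would present these in the order: first the $\Omega\Omega^*$ relation, then the two involving $I$, then the two involving $J$, and finally defer the $\tilde M$ relation.

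First I would do $\Mat(\Omega_{\otimes 2}\Omega_{\rm vec})=\avg{K\Omega K^*}.$. Since each $K\in\K$ is stochastic, $K\Omega=\Omega$ (rows of $\Omega$ are fixed, as recalled after the definition of $\Omega$), so $K\Omega K^*=\Omega K^*=\Omega$, using again that $K$ stochastic gives $K\ud=\ud$ hence $\Omega K^*=(\ud\pi)K^*=\ud\pi$. Thus $\avg{K\Omega K^*}.=\Omega\Omega^*=\ud\pi\pi^\dagger u=(\sum p_i^2)J$. Next, $\Mat(I_{\rm vec}\Omega_{\otimes 2})=\avg{K^*K}.$: I note $K^*K$ is diagonal (the $(i,j)$ entry counts $\lambda$ with $K\lambda=i$ and $K\lambda=j$, forcing $i=j$), and its $i$-th diagonal entry is the size of the partition block mapped to $i$. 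Dotting with $\dg u.$ and using $K\dg u.=\dg u.$ gives $\avg{K^*K}.\dg u.=\avg{K^*}.\dg u.=\Omega^*\dg u.=n\dg\pi.$; since a diagonal matrix equals $\diag{(\text{that matrix})\dg u.}$, we get $\avg{K^*K}.=n\,\diag{\pi}=n\omega$. The identity $\Mat(\Omega_{\otimes 2}I_{\rm vec})=N$ is then just the definition $N=\avg{KK^*}.=\avg{\sym{K}{}}.$, and $\Mat(J_{\rm vec}\Omega_{\otimes 2})=\avg{K^*JK}.=M$ is the definition of $M$. Finally $\Mat(\Omega_{\otimes 2}J_{\rm vec})=\avg{KJK^*}.$: since $K$ is binary stochastic, $KJ=J$ and $JK^*=J$, so this is $J$.

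The one genuinely nontrivial identity is the first, $\tilde M=\Mat(\Omega_{\rm vec}\Omega_{\otimes 2})=\avg{K^*\Omega K}.=\tfrac{n}{r^2}\avg{\utilde\rho^\dagger\utilde\rho}.$. Here $K^*\Omega K=K^*\ud\pi K=(K^*\ud)(\pi K)$, and for $K$ stochastic $K^*\ud$ is not simply $\ud$, so one must analyze $\pi K$ and $K^*\ud$ in terms of the Rees coordinates of $K$ — the range class controls which coordinates of $\pi K$ and $K^*\ud$ are supported, and averaging over the local group (Haar measure $\omega$) symmetrizes within a range class. As the excerpt itself flags, this evaluation invokes Friedman's Theorem to pin down the resulting averaged weights and produce the clean constant $n/r^2$ with $r$ the rank; I would therefore follow the paper's lead and postpone this computation to the section where Friedman's Theorem is available, proving here only the remaining five identities. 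The expected main obstacle is precisely this $\tilde M$ relation: handling $\avg{K^*\Omega K}.$ requires understanding how the invariant distribution $\pi$ interacts with the column structure (ranges) of kernel elements and why the local-group averaging collapses everything to the range state vectors $\utilde\rho$ with equal weights.
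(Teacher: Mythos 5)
Your overall strategy is the same as the paper's: convert each side into $\avg K^*\tilde XK.$ or $\avg K\tilde XK^*.$ via $\Omega_{\otimes 2}=\avg \tpow{K}{2}.$ and the Basic Relations, evaluate for $\tilde X\in\{\Omega,I,J\}$, and defer the $\tilde M$ identity until Friedman's theorem is in hand. Your treatments of $\avg K^*K.=n\omega$ (diagonality of $K^*K$, contraction with $\dg u.$, the $\diag{v}=\diag{\diag{v}\ud}$ tautology), of the two definitional identities for $M$ and $N$, and of $\avg KJK^*.=J$ all coincide with the paper's proof and are correct.

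There is, however, a genuine error in your derivation of $\Mat(\Omega_{\otimes 2}\Omega_{\rm vec})=\Omega\Omega^*$. You assert that for each $k\in\K$ one has $k\Omega k^*=\Omega k^*=(\ud\pi)k^*=\ud\pi=\Omega$. The first equality is fine ($k\ud=\ud$ gives $k\Omega=\Omega$), but the step $(\ud\pi)k^*=\ud\pi$ is false: $(\ud\pi)k^*=\ud(\pi k^*)$ and $(\pi k^*)_j=\pi_{k(j)}$, which is not $\pi_j$ unless $\pi$ is invariant under the permutation-like action of $k$ on coordinates. Indeed your pointwise claim contradicts your own conclusion: if $k\Omega k^*=\Omega$ held for every $k$, the average would be $\Omega=\ud\pi$, whereas the asserted answer is $\Omega\Omega^*=(\textstyle\sum p_i^2)J$; these agree only when $\pi$ is uniform, and in the paper's working example $\pi=[1/6,1/6,1/3,1/3]$ they differ. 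The correct (and shorter) argument, which is what the paper does, is to stop at $k\Omega k^*=\Omega k^*$ and only then average over the kernel:
$$\avg \Omega K^*.=\Omega\,\avg K^*.=\Omega\Omega^*=\ud\pi\pi^\dagger u=(\textstyle\sum p_i^2)\,J\ .$$
With that one step repaired, your proof of the five non-deferred identities matches the paper's.
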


For some trace calculations, note that since $\tr\omega=1$, equation \eqref{eq:MNdef} effectively says 
$$\tau=\tr M$$
Now, we note that
$$\tr NJ=\tr \avg KK^*J.=\tr\avg K^*JK.=\tr M=\tau$$
as well.
\begin{subsubsection}{Interpretation of $\tau$}
In describing the kernel, we have two basic constants, the rank $r$ and $\tau$. We want to interpret $\tau$ as describing
features of the kernel. Start with 
\begin{proposition} \label{prop:eestar}
 Consider an idempotent $e\in\K$, determined by a partition and range $(\P,\R)$.
For all $k$ in the local group $G_e$ for which $e$ acts as the identity,
$$kk^*=ee^*$$
is a $0$\,-$1$ matrix such $(kk^*)_{ij}=1$ if $i$ and $j$ are in the same block of $\P$.
\end{proposition}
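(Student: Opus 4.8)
The plan is to exploit the fact that every element $k$ of the local group $G_e$ is a binary stochastic matrix representing a function whose kernel partition (the partition of $V$ by level sets) is exactly $\P$, and whose range is the range class $\R$. First I would recall that for any function $k\in\K$ the matrix $k^*k$ is diagonal, as established in the proof of Proposition \ref{prop:level2}: $(k^*k)_{ij}=k_{\lambda i}k_{\lambda j}$ forces $i=j$ since no element $\lambda$ can be sent to two distinct values. Dually, $(kk^*)_{ij}=k_{i\lambda}k_{j\lambda}=\sum_\lambda k_{i\lambda}k_{j\lambda}$ is $1$ precisely when there exists $\lambda$ with $k(i)=\lambda=k(j)$, i.e. when $i$ and $j$ lie in the same block of the partition induced by $k$, and $0$ otherwise (each row of $k$ has exactly one $1$, so the sum is at most one term). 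Hence $kk^*$ is the $0$-$1$ block-indicator matrix of the partition of $k$; this is the content of the last sentence of the statement.

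The remaining point is that this matrix does not depend on which $k\in G_e$ we take: all elements of $G_e$ induce the same partition $\P$, namely the partition attached to the idempotent $e$. This is immediate from the Rees/completely-simple description recalled in the Introduction: within a single cell labelled $(\P,\R)$, every element has partition $\P$ (the row label) and acts as a permutation of the range class $\R$ (the column label), with $e$ the identity of that permutation group. So for any $k\in G_e$ the level-set partition equals that of $e$, giving $kk^*=ee^*$, and applying the previous paragraph to $e$ identifies $ee^*$ as the block-indicator matrix of $\P$.

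If one prefers a purely computational argument avoiding appeal to the structure theory, I would instead argue: since $e$ is the identity of $G_e$, we have $ke=ek=e$ and $k^2\in G_e$, and in particular $e$ and $k$ have the same range $\R$; moreover $ek=k$ shows that $k$ is injective on $\R$, and since $|\R|=r=\operatorname{rk}k$, the level-set partition of $k$ is the coarsest partition with one block mapping onto each element of $\R$ — but $ke=e$ says $k$ followed by $e$ collapses back, forcing $k$ to have the same fibers as $e$. Writing this out, $(kk^*)_{ij}=\delta_{k(i),k(j)}$ and one checks $k(i)=k(j)\iff e(i)=e(j)$ using $ek=ke=e$ together with injectivity of $e$ on its range.

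The main obstacle is simply pinning down cleanly that membership in $G_e$ forces the fiber partition to equal that of $e$ — everything else is a one-line matrix computation. Depending on how much of the Rees coordinate machinery the reader is assumed to have internalized, either the structural one-liner ("same cell $\Rightarrow$ same partition label") or the short chase with the identities $ke=ek=e$ will do; I would present the structural version as the main line and relegate the identity-chase to a parenthetical remark.
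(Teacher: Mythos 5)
Your main argument is essentially the paper's own proof: the paper likewise just writes out $(kk^*)_{ij}=k_{i\lambda}k_{j\lambda}$ and observes that this counts $1$ precisely when $i$ and $j$ are mapped to the same range element, hence lie in the same block of the partition; the fact that every element of the cell carries the same partition label is taken from the kernel structure, exactly as in your second paragraph. One correction to your parenthetical alternative: since $e$ is the identity of the local group $G_e$, the correct identities are $ke=ek=k$, not $ke=ek=e$, so the step ``$ke=e$ says $k$ followed by $e$ collapses back'' is not available as written. A repaired identity-chase would use $ek=k$ (with the paper's right-composition convention this gives $e(i)=e(j)\Rightarrow k(i)=k(j)$) and the group inverse, $kk^{-1}=e$, for the converse $k(i)=k(j)\Rightarrow e(i)=e(j)$ -- or, in place of the inverse, the observation that both partitions have exactly $r$ blocks, so the coarsening must be an equality.
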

\begin{proof} Writing out $k_{i\lambda}k_{j\lambda}$ shows that this sum counts 1 precisely when both $i$ and $j$ map
to the same range element, i.e., they are in the same block of $\P$.
\end{proof}

We have an interpretation of the entries $N_{ij}$ as the probability that vertices $i$ and $j$ appear in the same block of a partition.
Thus, the entry $(J-N)_{ij}$ gives the probability that $i$ and $j$ are \textsl{split}, i.e., they never appear together in the same block of a partition. \bigskip

\begin{remark} Note that $ee^*$ is the same for all cells in a given row of the kernel. And is independent of $k$ in a given cell. Thus,
$$N=\avg KK^*.=\avg EE^*.$$
where it is sufficient in this last average to consider only the idempotents in any single column of the kernel.
\end{remark}

The column sum $(uee^*)_i$ is the size of the block of $\P$ containing $i$. Now $\tr Jee^*=\tr Jkk^*$ sums all the ones in $kk^*$, and for all local $k$,
$$\tr k^*Jk=\tr Jkk^*= \text{ equals the sum of the squares of the blocksizes of }\P.$$
It follows that 
\begin{proposition} $\tau=\tr M$ is the average sum of squares of the blocksizes over the partitions $\P$ of $\K$.
\end{proposition}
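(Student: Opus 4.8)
The plan is to assemble the identification of $\tau$ from the pieces already laid out in the surrounding text, so that the proof is essentially a one-paragraph synthesis. We have three facts in hand: first, $\tau = \tr M$ (from the observation that $\tau = \tr M$ following equation \eqref{eq:MNdef}, using $\tr\omega=1$); second, $M = \avg K^\ast J K.$ by the definition of $M$; and third, by Proposition \ref{prop:eestar}, for each local group element $k$ with idempotent $e$, the matrix $kk^\ast = ee^\ast$ is the $0$-$1$ matrix with a $1$ in position $(i,j)$ exactly when $i$ and $j$ lie in the same block of the partition $\P$ attached to $e$.

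First I would write $\tau = \tr M = \tr \avg K^\ast J K. = \avg \tr(K^\ast J K).$, pulling the trace inside the average (linearity of trace and of $\avg\cdot.$). Then I would use cyclicity of the trace to rewrite $\tr(K^\ast J K) = \tr(J K K^\ast)$, which is exactly the quantity analyzed in the paragraph just before the statement. Next I would invoke Proposition \ref{prop:eestar}: for $k$ in the local group of an idempotent $e$ with partition $\P$, we have $kk^\ast = ee^\ast$, so $\tr(Jkk^\ast) = \tr(Jee^\ast)$ counts all the $1$'s in $ee^\ast$. Since $(ee^\ast)_{ij}=1$ precisely when $i,j$ are in the same block of $\P$, the total number of such pairs $(i,j)$ is $\sum_{\text{blocks } b \text{ of }\P} |b|^2$, the sum of squares of the blocksizes of $\P$. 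This value depends only on the partition $\P$ — i.e., only on the row of the kernel — and is independent of the choice of $k$ within its cell, as noted in the remark after Proposition \ref{prop:eestar}.

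Finally I would combine these: since the quantity $\tr(Jkk^\ast)$ depends only on the partition of $k$ and equals the sum of squares of blocksizes of that partition, averaging over $\K$ with respect to $\lambda$ gives
$$\tau = \avg \tr(JKK^\ast). = \avg \textstyle\sum_{b\in\P(K)} |b|^2.,$$
which is precisely the average, over the partitions $\P$ appearing in $\K$ (weighted by $\lambda$, equivalently by the marginal $\alpha$ on partitions), of the sum of squares of the blocksizes. This is the claimed statement.

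I do not expect a genuine obstacle here; the statement is a corollary of Proposition \ref{prop:eestar} together with the already-established identity $\tau=\tr M = \tr NJ$. The only point requiring a word of care is the bookkeeping of the averaging measure: one should note that because $\tr(JKK^\ast)$ is constant on each row of the kernel, the average over $\K$ collapses to an average over partitions against the partition-marginal of $\lambda$ (the measure $\alpha$ in the Rees decomposition), so that "average sum of squares of the blocksizes over the partitions $\P$ of $\K$" is unambiguous. If a fully explicit formula is wanted, one can write $\tau = \sum_{\P} \alpha(\P)\sum_{b\in\P}|b|^2$, but the qualitative statement follows directly from the chain of equalities above.
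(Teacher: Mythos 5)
Your proof is correct and follows essentially the same route as the paper: both arguments reduce $\tau=\tr M$ to the average of $\tr(K^*JK)=\tr(JKK^*)$, apply Proposition \ref{prop:eestar} to identify $\tr(Jee^*)$ with the sum of squared blocksizes of the partition, and then average over the kernel. Your extra remark that the quantity is constant on rows, so the average collapses to the partition marginal $\alpha$, is a harmless (and slightly more explicit) bookkeeping point that the paper leaves implicit.
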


We continue with some relations useful for deriving and describing properties of the kernel.
\end{subsubsection}
\end{subsection}
\begin{subsection}{Useful relations between an idempotent and its range matrix}
We make some useful observations regarding an idempotent $e$ and its range matrix $\rho(e)$. \bigskip

\begin{proposition} \label{prop:rhoe}
 For any idempotent $e$,  \bigskip

1. $\rho(e)e=\rho(e)\,,\qquad  e^*\,\rho(e)=\rho(e)\,,\qquad e\rho(e)=e$. \par
2. $ ee^*\rho(e)=e \vstrut$.
\begin{proof}
For \#1, the first and last relations follow since $e$ fixes its range. Transposing the first yields the second.
And \#2 follows directly.
\end{proof}
\end{proposition}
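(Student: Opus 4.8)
The plan is to unwind the definitions and use that $e$ is an idempotent function that fixes each element of its range. Recall $\utilde\rho(e)$ is the $0$-$1$ vector with $\utilde\rho(e)_i = 1$ exactly when $ie = i$ (equivalently $i$ is in the range of $e$), and $\rho(e) = \diag{\utilde\rho(e)}$ is the diagonal projection onto the range coordinates. So $\rho(e)$ acts on row vectors by zeroing out the coordinates not in the range, and on column vectors likewise.

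For \#1, I would argue as follows. The relation $e\rho(e) = e$: applying $e$ (as a matrix, i.e.\ acting on standard basis row vectors) sends $\e_i \mapsto \e_{ie}$, and $ie$ always lies in the range of $e$ (since $e$ is idempotent, $(ie)e = ie$), so right-multiplying by the range projection $\rho(e)$ changes nothing; hence $e\rho(e)=e$. Next $\rho(e)e = \rho(e)$: the $i$-th row of $\rho(e)e$ is $\utilde\rho(e)_i\,\e_{ie}$; if $i$ is in the range then $ie = i$ so that row is $\e_i$, which is exactly the $i$-th row of $\rho(e)$, and if $i$ is not in the range both rows are zero. Thus $\rho(e)e = \rho(e)$. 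Transposing this last identity gives $e^*\rho(e) = \rho(e)$ since $\rho(e)^* = \rho(e)$. This disposes of all three relations in \#1.

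For \#2, combine the pieces: $ee^*\rho(e) = e(e^*\rho(e)) = e\,\rho(e) = e$, using the second relation of \#1 and then the third.

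There is no real obstacle here; the only thing to be careful about is keeping straight the left-versus-right action convention (the paper composes to the right and uses row vectors, so $\e_i \Mt(f) = \e_{f(i)}$), and the fact that $\rho(e)$ is symmetric so that transposition of $\rho(e)e = \rho(e)$ legitimately yields $e^*\rho(e) = \rho(e)$. Everything else is a one-line verification directly from the definition of an idempotent fixing its range.
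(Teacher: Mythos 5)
Your proof is correct and follows essentially the same route as the paper's (which simply notes that the first and third relations hold because $e$ fixes its range, obtains the second by transposing the first using the symmetry of $\rho(e)$, and derives \#2 by combining them); you have merely written out the row-by-row verifications that the paper leaves implicit. No issues.
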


For $k\in\K$, let $\utilde\rho(k)$ denote the state vector of the range of $k$ and $\rho(k)$ the corresponding diagonal matrix.

\begin{corollary} \label{cor:erhoe}  \bigskip

1. For any $k\in\K$, $k\rho(k)=k$.\par
2. $e^*\utilde\rho(e)^\dagger=\dg \utilde\rho(e).\vstrut$.\par
3. $e\dg\utilde\rho(e).=\ud$.
\begin{proof}
For \#1, write $k=ke$ with $\rho(k)=\rho(e)$. Then 
$$k\rho(k)=ke\rho(e)=ke=k$$
as in the Proposition. And \#2 and \#3 follow from the Proposition via $\diag{v}\ud=\dg v.$.
\end{proof}
\end{corollary}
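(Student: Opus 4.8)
The plan is to derive all three statements as formal consequences of Proposition \ref{prop:rhoe}, which already records the key idempotent identities $\rho(e)e=\rho(e)$, $e^*\rho(e)=\rho(e)$, $e\rho(e)=e$, and $ee^*\rho(e)=e$. For \#1, the observation to exploit is that every $k\in\K$ has a canonical factorization $k=ke$ through an idempotent $e$ in the same column (same range class), so $\rho(k)=\rho(e)$; then $k\rho(k)=ke\rho(e)=ke=k$, invoking $e\rho(e)=e$ from the Proposition. The only thing needing care here is the assertion $\rho(k)=\rho(e)$, i.e. that the range state vector is a column-invariant of the kernel — this follows from the Rees structure described in the introduction, since $k$ and $e$ lie in the same range class $\R$, and $\utilde\rho$ reads off exactly which elements are fixed, which is determined by $\R$.

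For \#2 and \#3, I would simply translate the matrix identities of Proposition \ref{prop:rhoe} into vector form using the tautology $\diag{v}\ud=\dg v.$ already noted in the text: applying $\diag{\utilde\rho(e)}\ud=\dg\utilde\rho(e).$ to the relation $e^*\rho(e)=\rho(e)$ (written as $e^*\diag{\utilde\rho(e)}=\diag{\utilde\rho(e)}$) and multiplying on the right by $\ud$ gives $e^*\dg\utilde\rho(e).=\dg\utilde\rho(e).$, which is \#2. Likewise, from $e\rho(e)=e$ multiply on the right by $\ud$: the left side is $e\diag{\utilde\rho(e)}\ud=e\dg\utilde\rho(e).$ and the right side is $e\ud=\ud$ since $e$ is stochastic (it is a function matrix, so every row sum is $1$); hence $e\dg\utilde\rho(e).=\ud$, which is \#3.

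None of the three steps presents a genuine obstacle; the computation is routine once the factorization $k=ke$ and the column-invariance of $\rho$ are granted. The one point I would be most careful to state explicitly is why $\rho(k)=\rho(e)$ when we write $k=ke$ — that is the only place where the structure of $\K$ (rather than pure matrix algebra) enters, and it is what makes \#1 meaningful for a general kernel element rather than just for idempotents.
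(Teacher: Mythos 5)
Your proposal is correct and follows essentially the same route as the paper: the factorization $k=ke$ with $\rho(k)=\rho(e)$ for \#1, and right-multiplication by $\ud$ via the identity $\diag{v}\ud=\dg v.$ for \#2 and \#3. The extra care you take in justifying $\rho(k)=\rho(e)$ and in noting $e\ud=\ud$ only makes explicit what the paper leaves implicit.
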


\end{subsection}
\end{section}

\begin{section}{Projections}\label{sec:proj}
We can compute the row and column projections in the kernel using our operators $M$ and $N$.  \bigskip

The idempotents having the same range $\R$, say, form a \textsl{left-zero semigroup}. That is, if $e_1$ and $e_2$ have the same
range, then $e_1e_2=e_1$, since $e_1$ determines the partition and the range is fixed by both idempotents. Correspondingly, 
the idempotents having the same partition $\P$, say, form a \textsl{right-zero semigroup}. That is, if $e_1$ and $e_2$ have the same
partition, then $e_1e_2=e_2$, since the partition for both is the same and an element in the range of $e_2$ is mapped by $e_1$ into an element
in the same block of the common partition, which in turn is mapped back to the original element by $e_2$.  \bigskip

\begin{remark} Note that in this terminology Proposition \ref{prop:rhoe} says that the pair $\{e,\rho(e)\}$ form a left-zero semigroup.
\end{remark}

\begin{notation}
It is convenient to fix an enumeration of the partitions $\{\P_1,\P_2,\ldots\}$ and ranges $\{\R_1,\R_2,\ldots\}$.
We set $\rho_j$ to be the matrix $\diag{\utilde\rho_j}$ where  $\utilde\rho_j$ is the state vector for $\R_j$.
\end{notation}

\begin{subsection}{Column projections} \label{sec:cproj}
Denote $e_{ij}$ the idempotents for range $\R_j$. Define the column projection to be the average
$$P_j=\alpha_\lambda e_{\lambda j} \ .$$
The measure $\alpha$ is the component of $\lambda$ on the partitions. 

\begin{proposition}\label{prop:col}\hfill\break

1. $P_j^2=P_j$ is an idempotent.\medskip

2. $P_j=N  \rho_j$.
\end{proposition}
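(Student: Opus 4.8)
The plan is to prove the two parts in sequence, using the left-zero structure of idempotents sharing a range together with the defining formulas $N=\avg KK^*.=\avg EE^*.$ and Corollary \ref{cor:erhoe}. First I would establish part 1. Writing $P_j=\alpha_\lambda e_{\lambda j}$, compute $P_j^2=\alpha_\lambda\alpha_\mu\,e_{\lambda j}e_{\mu j}$. Since $e_{\lambda j}$ and $e_{\mu j}$ have the common range $\R_j$, they lie in a left-zero semigroup, so $e_{\lambda j}e_{\mu j}=e_{\lambda j}$; hence $P_j^2=\alpha_\lambda e_{\lambda j}\,(\alpha_\mu\cdot 1)=\alpha_\lambda e_{\lambda j}=P_j$, using $\sum_\mu\alpha_\mu=1$ since $\alpha$ is a probability measure on the partitions.

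For part 2, the idea is to compute $N\rho_j$ using the remark following Proposition \ref{prop:eestar}, namely $N=\avg EE^*.$ where the average may be taken over the idempotents $E$ in any single \emph{column} of the kernel — in particular over $\{e_{\lambda j}\}_\lambda$ with range $\R_j$ fixed, weighted by $\alpha$. Thus $N=\alpha_\lambda\, e_{\lambda j}e_{\lambda j}^*$, and so $N\rho_j=\alpha_\lambda\, e_{\lambda j}e_{\lambda j}^*\rho_j=\alpha_\lambda\, e_{\lambda j}e_{\lambda j}^*\rho(e_{\lambda j})$ since $\rho_j=\rho(e_{\lambda j})$ for every $\lambda$ (all these idempotents have range $\R_j$). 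By Proposition \ref{prop:rhoe}, part 2, $e_{\lambda j}e_{\lambda j}^*\rho(e_{\lambda j})=e_{\lambda j}$, whence $N\rho_j=\alpha_\lambda e_{\lambda j}=P_j$.

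The one point requiring care — and the step I expect to be the main obstacle — is justifying that $N$ equals the $\alpha$-weighted average $\alpha_\lambda e_{\lambda j}e_{\lambda j}^*$ over a single column rather than the full average over $\K$. This rests on the remark after Proposition \ref{prop:eestar}: $ee^*$ depends only on the partition (the row), being the same for all cells in a row and independent of the choice of $k$ within a cell, so averaging $KK^*$ over $\K$ collapses to averaging $EE^*$ over the partitions with weight $\alpha$, which is exactly what the idempotents in column $j$ realize. Once that reduction is in hand, the computation is the short chain above; everything else is the left-zero identity and Proposition \ref{prop:rhoe}.
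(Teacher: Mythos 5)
Your proposal is correct and follows essentially the same route as the paper: part 1 via the left-zero property and $\sum_\mu\alpha_\mu=1$, and part 2 via the reduction $N=\alpha_\lambda e_{\lambda j}e_{\lambda j}^*$ (justified by the remark after Proposition \ref{prop:eestar}) combined with Proposition \ref{prop:rhoe}. The only cosmetic difference is that you invoke $ee^*\rho(e)=e$ in one step where the paper applies $e^*\rho(e)=\rho(e)$ and then $e\rho_j=e$ separately.
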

\begin{proof}
For \#1, with $u$ a vector of all ones,
\begin{align*}
P_j^2&=\alpha_\lambda \alpha_\mu e_{\lambda j} e_{\mu j}\cr
&=\alpha_\lambda \alpha_\mu u_\mu e_{\lambda j}\cr
&=\alpha_\lambda e_{\lambda j}=P_j
\end{align*}
using the fact that the idempotents with a common range form a left-zero semigroup and that
the $\alpha$'s are a probability distribution.\medskip

For \#2, recall the relation $e^* \rho(e)= \rho(e)$ from Proposition \ref{prop:rhoe}, and we have
$$N \rho_j=\alpha_\lambda e_{\lambda j}e_{\lambda j}^* \rho_j
=\alpha_\lambda e_{\lambda j} \rho_j=P_j$$
with $e_{i j} \rho_j=e_{ij}$ as each $e_{ij}$ fixes the common range.
\end{proof}
\end{subsection}

\begin{subsection}{Row projections}\label{sec:rproj}
Analogously, define $Q_i=\beta_\mu e_{i\mu}$, the average idempotent with common partition
$\P_i$. As in \#1 of Prop. \ref{prop:col}, $Q_i^2=Q_i$, where now the idempotents form a 
right-zero semigroup.  \bigskip

We need the fact [to be proved later] that the average of the range matrices is $r$ times $\omega=\diag{\pi}$. We state this in the form
\begin{equation}\label{eq:omega}
\omega=r^{-1}\beta_\mu \rho_\mu
\end{equation}
where $\beta$ is the part of the measure $\lambda$ on the range classes.  \bigskip

\begin{proposition}\label{prop:row}
$$Q_i=r\,e_ie_i^*\omega$$
where $e_i$ is any idempotent with partition $\P_i$.
\end{proposition}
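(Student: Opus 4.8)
The plan is to expand $Q_i = \beta_\mu e_{i\mu}$ directly, using the identity $e e^* \rho(e) = e$ from Proposition \ref{prop:rhoe}(2). Fix any idempotent $e_i$ with partition $\P_i$. By Proposition \ref{prop:eestar}, the matrix $e_{i\mu} e_{i\mu}^*$ depends only on the partition $\P_i$, hence equals $e_i e_i^*$ for every range index $\mu$. So I would first write, for each $\mu$,
$$e_{i\mu} = e_{i\mu} e_{i\mu}^* \rho_\mu = e_i e_i^* \rho_\mu,$$
where $\rho_\mu = \diag{\utilde\rho_\mu}$ is the range matrix of $\R_\mu$ (and $\rho(e_{i\mu}) = \rho_\mu$ since $e_{i\mu}$ has range $\R_\mu$).

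Next I would average over $\mu$ against $\beta$. Since $e_i e_i^*$ does not depend on $\mu$, it pulls out of the $\beta$-average:
$$Q_i = \beta_\mu e_{i\mu} = e_i e_i^* \bigl(\beta_\mu \rho_\mu\bigr).$$
Now invoke the stated relation \eqref{eq:omega}, namely $\omega = r^{-1}\beta_\mu \rho_\mu$, equivalently $\beta_\mu \rho_\mu = r\,\omega$. Substituting gives
$$Q_i = e_i e_i^* \cdot r\,\omega = r\, e_i e_i^* \omega,$$
which is exactly the claimed formula. One should also remark that the result is independent of the choice of $e_i$ among idempotents with partition $\P_i$, which is immediate because $e_i e_i^*$ is — by Proposition \ref{prop:eestar} — the same $0$-$1$ matrix (the block-membership matrix of $\P_i$) for all such idempotents.

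The only real subtlety is bookkeeping about which matrices depend on the partition versus the range: the step $e_{i\mu} e_{i\mu}^* = e_i e_i^*$ must be justified by Proposition \ref{prop:eestar} (constancy along a row), and the step $e_{i\mu} = e_{i\mu} e_{i\mu}^* \rho_\mu$ by Proposition \ref{prop:rhoe}(2) together with the identification $\rho(e_{i\mu}) = \rho_\mu$. Everything else is a one-line pull-out of a constant matrix from the $\beta$-average and a substitution of \eqref{eq:omega}. Since \eqref{eq:omega} is explicitly flagged as "to be proved later," I would treat it as a black box here, exactly as the paper does; no other obstacle arises.
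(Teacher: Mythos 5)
Your proof is correct and follows essentially the same route as the paper: the paper's argument starts from $r\,e_ie_i^*\omega=e_ie_i^*\beta_\mu\rho_\mu$ and uses $e_{i\mu}e_{i\mu}^*=e_ie_i^*$ together with $e_{i\mu}e_{i\mu}^*\rho_\mu=e_{i\mu}$ to arrive at $\beta_\mu e_{i\mu}=Q_i$, which is exactly your chain of identities read in the reverse direction. The ingredients you invoke (Proposition \ref{prop:eestar} for constancy along a row, Proposition \ref{prop:rhoe} for $ee^*\rho(e)=e$, and equation \eqref{eq:omega} treated as given) match the paper's.
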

\begin{proof}
First, recall that $e_{ij}e_{ij}^*=e_ie_i^*$ for any idempotent $e_{ij}$ with partition $\P_i$.
And $e_ie_i^* \rho(e_i)=e_i$. Using equation \eqref{eq:omega} we have
\begin{align*}
r\,e_ie_i^*\omega&=e_ie_i^*\beta_\mu \rho_\mu\cr
&=\beta_\mu e_{i\mu} e_{i\mu}^* \rho_\mu\cr
&=\beta_\mu e_{i\mu}=Q_i
\end{align*}
as required.
\end{proof}

\end{subsection}

\begin{subsection}{Average idempotent}
We now compute the average idempotent $\avg E.$.

\begin{theorem} The average idempotent satisfies
$$\avg E.=rN\,\omega$$
\begin{proof}
From Proposition \ref{prop:row}, we need to average $Q_i$ over the partitions. We get
$$\avg E.=\alpha_\lambda Q_\lambda
=r\,\alpha_\lambda e_{\lambda 1}e_{\lambda 1}^*\omega=r\,N\omega$$
using the idempotents with common range $\R_1$.
\end{proof}
\end{theorem}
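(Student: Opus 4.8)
The plan is to piece the result together directly from the two projection propositions just established. The key observation is that the average idempotent $\avg E.$ is, by definition of the measure $\lambda = \alpha\times\omega\times\beta$ on $\K$, an iterated average: averaging $E$ over the whole kernel is the same as first averaging over a fixed partition-row (giving the row projections $Q_i = \beta_\mu e_{i\mu}$ of Section~\ref{sec:rproj}) and then averaging the $Q_i$ over the partitions with weights $\alpha$. So the first step is simply to write
$$\avg E. = \alpha_\lambda\, Q_\lambda,$$
invoking Proposition~\ref{prop:row} to replace each $Q_\lambda$ by $r\, e_{\lambda}e_{\lambda}^*\omega$, where $e_\lambda$ may be taken to be any idempotent with partition $\P_\lambda$.

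The second step is to choose a convenient representative: for each partition-row $\lambda$, take the idempotent $e_{\lambda 1}$ lying in the column of range class $\R_1$. Then
$$\avg E. = r\,\alpha_\lambda\, e_{\lambda 1} e_{\lambda 1}^* \,\omega = r\Bigl(\alpha_\lambda\, e_{\lambda 1} e_{\lambda 1}^*\Bigr)\omega,$$
since $\omega$ does not depend on $\lambda$ and can be pulled out of the average. The third step is to recognize the bracketed quantity as $N$. By the Remark following Proposition~\ref{prop:eestar}, $N = \avg KK^*. = \avg EE^*.$ and moreover this last average is insensitive to the column — it suffices to average $e_{\lambda j}e_{\lambda j}^*$ over the partitions for any single fixed column $j$, because $e_{\lambda j}e_{\lambda j}^* = e_\lambda e_\lambda^*$ depends only on the row. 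Taking $j=1$ gives exactly $N = \alpha_\lambda\, e_{\lambda 1}e_{\lambda 1}^*$, and substituting yields $\avg E. = rN\omega$.

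There is no real obstacle here — the argument is a two-line chain of substitutions — but the one point requiring care is the legitimacy of the iterated-averaging step and of the column-independence of $e_\lambda e_\lambda^*$. Both are already in hand: the product structure of $\lambda$ is the content of the limit theorem and Theorem~\ref{thm:idem}, and the fact that $ee^*$ is constant along a row is Proposition~\ref{prop:eestar} together with its Remark. So the proof is essentially bookkeeping, matching the definition of $N$ against the row-indexed average of the $Q_i$.
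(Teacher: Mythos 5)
Your proof is correct and follows exactly the paper's own argument: average the row projections $Q_\lambda = r\,e_\lambda e_\lambda^*\omega$ over the partitions with weights $\alpha$, pick representatives in a single column, and identify $\alpha_\lambda e_{\lambda 1}e_{\lambda 1}^*$ with $N$ via the column-independence of $ee^*$. The extra care you take in justifying the iterated-averaging step and the identification of $N$ is sound but not a different route.
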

\end{subsection}
\end{section}

\section{Equipartitioning}

In the current framework, as in \cite{BuMu}, we express Friedman's Theorem this way:
\begin{theorem}[\cite{BuMu,FRC}, version 2]\label{thm:bmf2}
For $k\in\K$, let $\utilde\rho(k)$ denote the $0$-$1$ vector with support the range of $k$. Then
$$\pi k=\frac{1}{r}\utilde\rho(k)$$
where $r$ is the rank of the kernel.
\end{theorem}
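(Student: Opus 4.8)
The plan is to show that the row vector $\pi k$ is fixed by the dynamics at level one in a way that forces it to be a scalar multiple of the range indicator $\utilde\rho(k)$, and then to pin down the scalar using the normalization $\pi\ud = 1$. First I would observe that for any $k\in\K$, the range class $\R$ of $k$ is common to an entire column of the kernel, and one may write $k = ek$ where $e$ is the idempotent with the same $(\P,\R)$ as $k$ — more usefully, $k = k'e'$ for a suitable idempotent, and in particular $k\rho(k) = k$ by Corollary~\ref{cor:erhoe}. The key structural input is that for a random kernel element $K$ drawn from $\lambda$, we have $\Omega = \avg K. = \ud\pi$, so $\pi$ is characterized (up to scale) as the unique left-invariant vector of $A$, equivalently $\pi K_0 = \pi$ on average; but I need a statement valid for each individual $k$, not just on average.

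The cleaner route: fix $k\in\K$ and consider $v := \pi k$. Since $k$ has range $\R$, every entry of $v$ is supported on $\R$, i.e. $v = v\rho(k)$, because $k = k\rho(k)$ gives $\pi k = \pi k\rho(k)$. Next I would show $v$ is constant on $\R$. For this, use that $\K$ is a two-sided ideal: for any $w\in\SG$, $kw\in\K$, and more specifically pick elements of the local group $G_e$ (with $e$ the idempotent of $k$'s cell) that permute $\R$ transitively; since $\pi\Omega = \pi$ and $\Omega = \avg K.$ with the Haar/uniform factor $\omega$ on each local group, averaging $\pi k g$ over $g$ in the local group (which permutes $\R$) both preserves $\pi k g$ having the form "$\pi$ times a kernel element with range $\R$" and symmetrizes it over $\R$. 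Combined with the fact that all such averages must again be expressible through $\Omega$-type quantities, one deduces $\pi k$ takes a single constant value $c$ on $\R$ and $0$ off $\R$, hence $\pi k = c\,\utilde\rho(k)$. Finally, $c$ is determined by $\pi k \ud = \pi (k\ud) = \pi\ud = 1$ since $k$ is stochastic ($k\ud = \ud$), while $\utilde\rho(k)\ud = |\R| = r$; therefore $c = 1/r$.

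The main obstacle is the middle step — proving $\pi k$ is \emph{constant} on the range $\R$, not merely supported there. Supportedness is immediate from $k\rho(k)=k$, but constancy genuinely uses the completely simple structure: that the local group acts as the full symmetry needed, or equivalently that $\pi$ is left-invariant under the whole semigroup action in the averaged sense and $\K$ absorbs everything. I would handle this by exploiting left-invariance $\pi = \pi A = \pi A^m$ together with $A^m \to$ (Cesàro) $\avg K.$, writing $\pi = \pi\,\avg K. = \avg \pi K.$ and then noting that $\pi K$ for a uniformly-chosen-local-group $K$ in a fixed cell is exactly the symmetrization of $\pi e$ over the permutation action on $\R$; a short argument shows $\pi e$ restricted to $\R$ must already be constant because any two elements of $\R$ are swapped by some element of the local group while $\pi$ is (on average, hence after the symmetrizing average) invariant. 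Care is needed to make the averaging argument apply cell-by-cell rather than only globally; I expect this to require invoking Theorem~\ref{thm:idem} or the direct-product structure $\lambda = \alpha\times\omega\times\beta$ so that the Haar factor $\omega$ on the local group does the symmetrizing work transparently.
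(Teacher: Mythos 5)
Your outline handles the two easy parts correctly: the support statement $\pi k=\pi k\,\rho(k)$ follows from Corollary \ref{cor:erhoe}, and the normalization $c=1/r$ from $\pi k\ud=\pi\ud=1$ together with $\utilde\rho(k)\ud=r$. The gap is exactly where you locate it --- constancy of $\pi k$ on the range $\R$ --- and the mechanism you propose does not close it. First, the local group $G_e$ is merely \emph{some} group of permutations of $\R$; it need not be transitive, and certainly need not swap two prescribed elements of $\R$. In the extreme case of a rectangular band (all local groups trivial), which does occur, your symmetrization does nothing at all, yet the theorem still holds. Second, even when $G_e$ is large, Haar-averaging $\pi kg$ over $g\in G_e$ only constrains the average $\pi k\bar g$, where $\bar g$ is the mean group element; to deduce anything about $\pi k$ itself you would need $\pi kg=\pi k$ for each individual $g$, which is essentially the statement being proved. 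Third, the global identity $\pi=\pi\Omega$ mixes all cells and ranges, and nonnegativity alone does not localize it to a single cell.

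The paper supplies the missing ingredient by a genuinely different, second-order argument. From $A_{\otimes2}=\tpow{A}{2}+\tpow{\Delta}{2}$ with $\Delta=\tfrac12(R-B)$, the fixed-point equation $ANA^*+\Delta N\Delta^*=N$ for $N=\Mat(\Omega_{\otimes2}I_{\rm vec})$, sandwiched between $\pi$ and $\pi^\dagger$, forces the kernel average of $\|\pi\Delta K\|^2$ to vanish, hence $\pi\Delta k=0$ for \emph{every} $k\in\K$ (Proposition \ref{propN}); induction on word length then gives $\pi wk=\pi k$ for all $w\in\SG$. Next, $\pi\Delta N=0$ plugged back into the same fixed-point equation shows $N\pi^\dagger$ is fixed by $A$, hence $\pi N=au$ is constant; the left-zero algebra of the idempotents in a column ($eNe^*=ee^*$) converts this into $\pi ee^*=au$, which is precisely the constancy on the range, and $ee^*\rho(e)=e$ finishes with $\pi e=a\utilde\rho(e)$, $a=1/r$. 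So the constancy comes from a positivity (quadratic-form) argument at level two of the tensor hierarchy, not from transitivity of the local group; to repair your proof you need some substitute for Proposition \ref{propN}, and I do not see one inside the averaging framework you describe.
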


We will refer to this theorem as BM/F. \bigskip

\begin{corollary} For any $k\in\K$, we have
$$\pi kk^*=\frac{1}{r}\,u$$
\begin{proof} By Corollary \ref{cor:erhoe}, we have
$$k\rho(k)=k\qquad \text{which implies}\qquad k\utilde\rho(k)^\dagger=\ud$$
Now transpose.
\end{proof}
\end{corollary}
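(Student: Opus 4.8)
The plan is to prove the corollary directly from BM/F (Theorem~\ref{thm:bmf2}) together with Corollary~\ref{cor:erhoe}, essentially transcribing the hint in the statement. First I would invoke Corollary~\ref{cor:erhoe}, part 1, which gives $k\rho(k)=k$ for any $k\in\K$. Writing this out entrywise, $\rho(k)=\diag{\utilde\rho(k)}$ is a diagonal $0$-$1$ matrix, so $k\rho(k)=k$ says that right-multiplying $k$ by the indicator of its own range leaves $k$ unchanged — natural, since every column of $k$ that is nonzero is indexed by a range element. Using the identity $\diag{v}\dg u.=\dg v.$ with $v=\utilde\rho(k)$, I would then multiply the relation $k\rho(k)=k$ on the right by $\dg u.$ to obtain
$$k\,\utilde\rho(k)^\dagger = k\rho(k)\dg u. = k\dg u. = \ud,$$
the last step because $k$ is stochastic (it is the matrix of a function, so each row sums to $1$).

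Next I would transpose this vector identity. Since $(k\,\utilde\rho(k)^\dagger)^* = \utilde\rho(k)\,k^*$ and $(\ud)^* = u$, transposing gives $\utilde\rho(k)\,k^* = u$. Now apply Theorem~\ref{thm:bmf2}, which states $\pi k = \frac{1}{r}\,\utilde\rho(k)$, equivalently $\utilde\rho(k) = r\,\pi k$. Substituting,
$$r\,\pi k k^* = \utilde\rho(k)\,k^* = u,$$
and dividing by $r$ yields $\pi k k^* = \frac{1}{r}\,u$, which is exactly the claim.

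I do not expect any real obstacle here: the argument is a two-line chain of substitutions once BM/F is available, and all the supporting facts — $k$ stochastic, $\diag{v}\ud=\dg v.$, the range-fixing property $k\rho(k)=k$ — are already established in the excerpt (Corollary~\ref{cor:erhoe} and the surrounding discussion). The only point requiring a moment's care is the direction of the transpose and keeping straight that $\utilde\rho(k)$ is a row vector while $\utilde\rho(k)^\dagger$ is its column counterpart, so that the shapes match: $k$ is $n\times n$, $\utilde\rho(k)^\dagger$ is $n\times 1$, the product is $n\times 1$ and equals $\ud$. If one wanted an alternative derivation avoiding the explicit transpose, one could instead start from $e^*\utilde\rho(e)^\dagger = \dg\utilde\rho(e).$ (part 2 of Corollary~\ref{cor:erhoe}) applied with $e$ the idempotent of the same cell as $k$, together with $\rho(k)=\rho(e)$ and $k^* = e^*k^*$; but the transpose route is cleanest and matches the stated hint.
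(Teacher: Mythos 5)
Your proposal is correct and follows essentially the same route as the paper: obtain $k\utilde\rho(k)^\dagger=\ud$ from $k\rho(k)=k$, transpose to get $\utilde\rho(k)\,k^*=u$, and substitute $\utilde\rho(k)=r\,\pi k$ from Theorem~\ref{thm:bmf2}. You have merely made explicit the steps the paper compresses into ``Now transpose.''
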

This corollary can be rephrased as stating that each block of any partition has $\pi$-probability equal to $1/r$.
Here we will give an independent proof with a functional analytic flavor. \bigskip

\begin{subsection}{A and $\Delta$}
Working with two colors $R$ and $B$, it is convenient to introduce the operator $\Delta$ 
$$\Delta=\half(R-B)$$
Then we check that:
$$A_{\otimes 2}=\tfrac12(\tpow{R}{2}+\tpow{B}{2})=\tpow{A}{2}+\tpow{\Delta}{2}$$
Hence, elements $Y\in\N$ satisfy
$$AYA^*+\Delta Y\Delta^*=Y$$
and those in $\M$ satisfy
$$A^*YA+\Delta^* Y\Delta=Y$$
In particular, $N$ and $M$ satisfy these equations accordingly:
\begin{align} \label{eq:N}
ANA^*+\Delta N\Delta^*=N \\
A^*MA+\Delta^* M\Delta=M  \label{eq:M}
\end{align}
\end{subsection}
\subsection{Friedman's Theorem d'apr\`es Budzban-Mukherjea}
First, some notation \bigskip

\begin{notation} Let $\E(\K)$ denote the set of all idempotents of the kernel $\K$.     \end{notation}

Start with

\begin{proposition} \label{propN}
For every $k\in\K$, $\pi\Delta k=0$. Equivalently, \hfill\break $\pi Rk=\pi Bk=\pi k$.
\label{proppidelta}
\end{proposition}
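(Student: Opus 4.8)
The plan is to exploit the fixed-point equation \eqref{eq:N} for $N$ together with the structure of $N$ as a convex combination of matrices $kk^*$, using the fact that $\pi$ is the invariant distribution for $A$ and hence for the ``classical'' part of the level-two dynamics. First I would left-multiply \eqref{eq:N} by $\pi$ and right-multiply by $\dg u.$, getting $\pi A N A^* \dg u. + \pi \Delta N \Delta^* \dg u. = \pi N \dg u.$. Since $A$ is stochastic, $A^* \dg u. = \dg u.$ and $\pi A = \pi$, so the first term is $\pi N \dg u.$; this forces $\pi \Delta N \Delta^* \dg u. = 0$. Now $N = \avg KK^*. = \sumprime k k^*$ is a nonnegative combination, and $\Delta N \Delta^* = \sumprime (\Delta k)(\Delta k)^*$ is a nonnegative combination of positive-semidefinite matrices (each $(\Delta k)(\Delta k)^*$ has nonnegative diagonal and in fact $v (\Delta k)(\Delta k)^* \dg v. = \|v \Delta k\|^2 \ge 0$). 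Testing against $u$: $u (\Delta k)(\Delta k)^* \dg u. = \| u\Delta k\|^2$. But $u\Delta = \tfrac12 u(R-B) = \tfrac12(u - u) = 0$ since $R$ and $B$ are stochastic, so that term vanishes identically and gives no information — I need to test against $\pi$ instead of $u$ on the left.

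So the better route: left-multiply \eqref{eq:N} by $\pi$ and right-multiply by $\dg \pi.$, obtaining $\pi A N A^* \dg\pi. + \pi\Delta N\Delta^*\dg\pi. = \pi N\dg\pi.$. Again $\pi A = \pi$ and $A^*\dg\pi. = \dg{\pi A}. = \dg\pi.$, so the first term equals $\pi N \dg\pi.$ and we conclude $\pi\Delta N\Delta^*\dg\pi. = 0$, i.e. $\|\pi\Delta k\|^2$ summed with nonnegative weights $\lambda(k)$ vanishes. Hence $\pi\Delta k = 0$ for every $k$ in the support of $\lambda$; but since $\lambda$ has full support on $\K$ (its support generates, and it is supported on all of $\K$ by the Rees decomposition $\alpha\times\omega\times\beta$ with $\omega$ the full Haar measure and — one must check — $\alpha,\beta$ of full support, which holds here because the limiting measure charges every cell), this gives $\pi\Delta k = 0$ for all $k\in\K$. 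The equivalent statement $\pi Rk = \pi Bk = \pi k$ then follows: $\pi\Delta k = 0$ means $\pi Rk = \pi Bk$, and since $A = \tfrac12(R+B)$ with $\pi A k = \pi k$ (as $\pi A = \pi$), averaging gives $\pi k = \tfrac12(\pi Rk + \pi Bk) = \pi Rk = \pi Bk$.

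The main obstacle I anticipate is the support issue: the clean argument via \eqref{eq:N} only yields $\pi\Delta k = 0$ for $k$ in the support of $\lambda$, and one must argue this is all of $\K$. One way around this, avoiding any support subtlety, is to instead prove the statement directly for an arbitrary $k\in\K$ by writing $k = k_0 k$ for a suitable idempotent and propagating: fix $k\in\K$ and note it suffices to handle generators. Actually the cleanest fix is to observe that $\pi Rk = \pi k$ and $\pi Bk = \pi k$ for \emph{all} $k\in\K$ follows by noting $Rk, Bk \in\K$ (since $\K$ is an ideal) and using BM/F, Theorem~\ref{thm:bmf2}: $\pi k = \tfrac1r\utilde\rho(k)$ and $\pi Rk = \tfrac1r\utilde\rho(Rk)$, and $\utilde\rho(Rk) = \utilde\rho(k)$ because $R$ is a bijection on the common range $\R$ when restricted appropriately — but wait, the paper explicitly wants an \emph{independent} proof not using BM/F. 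So I would present the \eqref{eq:N} argument and handle support by the following remark: every idempotent $e\in\K$ lies in the support of $\lambda$ (each cell carries positive $\alpha\times\omega\times\beta$ mass), giving $\pi\Delta e = 0$; then for general $k = ek'$ in a cell with idempotent $e$, write $k = e k$... hmm, this still needs care. I would ultimately settle for: the \eqref{eq:N} computation gives $\sumprime \lambda(k)\|\pi\Delta k\|^2 = 0$ hence $\pi\Delta k=0$ on $\mathrm{supp}\,\lambda$, and $\mathrm{supp}\,\lambda = \K$ by the structure theorem since $\alpha,\omega,\beta$ are each fully supported (for $\omega$, Haar; for $\alpha,\beta$, because the Cesàro-limiting measure of an irreducible aperiodic chain charges every partition and every range class appearing in $\K$). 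The equivalent reformulation is then immediate as above.
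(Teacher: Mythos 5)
Your final argument is essentially the paper's own proof: multiply \eqref{eq:N} on the left by $\pi$ and on the right by $\pi^\dagger$, use $\pi A=\pi$ and $A^*\pi^\dagger=\pi^\dagger$ to isolate $\pi\Delta N\Delta^*\pi^\dagger=\avg\|\pi\Delta K\|^2.=0$, conclude termwise by nonnegativity, and recover the equivalent form by averaging $\pi Rk=\pi Bk$ against $\pi Ak=\pi k$. The support-of-$\lambda$ point you raise is a legitimate refinement that the paper passes over silently (it treats the average as charging every $k\in\K$, which holds since $\alpha$, $\omega$, $\beta$ are fully supported); note only that in your discarded first attempt the claims $A^*\dg u.=\dg u.$ and $u\Delta=0$ are false for merely stochastic $A$ (they require double stochasticity), but they play no role in your final argument.
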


\begin{proof} Multiplying eq.\tsp(\ref{eq:N}) on the left by $\pi$ and on the right by $\pi^\dagger$, we have,
using $\pi A=\pi,\,A^*\pi^\dagger=\pi^\dagger$,
$$\pi{N}\pi^\dagger+\pi\sym{\Delta}{N}\pi^\dagger=\pi N\dg\pi.$$
Since $N=\avg KK^*.$, we have
\begin{align*}
\pi\sym{\Delta}{N}\pi^\dagger&=0\cr 
&=\avg \pi\Delta KK^*\Delta^*\pi^\dagger.=\avg\|\pi\Delta K\|^2. 
\end{align*}
All terms are nonnegative so that $\pi\Delta k=0$ for every $k$. From the definition of $\Delta$ the alternative
formulation results after averaging $\pi Rk=\pi Bk$ to $\pi Ak=\pi k$.\hfill\qedhere
\end{proof}

And directly from the definition of $N$
\begin{corollary} \label{cor:pideltan} We have $\pi\Delta N =0$.\end{corollary}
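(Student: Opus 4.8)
The plan is to derive $\pi\Delta N = 0$ directly from Proposition \ref{propN}, which already gives $\pi\Delta k = 0$ for every $k\in\K$. Since $N$ is defined as the averaged sum $N = \avg KK^*. = \sumprime kk^*$ over random kernel elements $K$ with respect to $\lambda$, I would simply write
$$\pi\Delta N = \pi\Delta\,\avg KK^*. = \avg (\pi\Delta K)K^*. = \avg 0\cdot K^*. = 0,$$
using linearity of the averaging and the fact that $\pi\Delta k = 0$ for each individual $k$ appearing in the average.

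The key step is recognizing that the operator $\pi\Delta(\cdot)$ acts on each summand $kk^*$ by first hitting $k$, and that factor is annihilated by Proposition \ref{propN}. No positivity or maximality argument is needed here — unlike in the proof of Proposition \ref{propN} itself, where one had to extract $\pi\Delta k = 0$ from a sum of nonnegative squared norms; that work is already done, and this corollary just propagates it through the definition of $N$.

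There is essentially no obstacle: the only thing to be mildly careful about is that the averaging $\avg\,\cdot\,.$ is linear and commutes with left multiplication by the fixed matrix $\pi\Delta$, which is immediate since $\avg\,\cdot\,.$ is a convex combination (a $\lambda$-weighted sum) of the matrices $KK^*$. One could equally phrase it with the explicit primed sum $\sumprime$ to make the interchange manifest. I expect this to be a one-line proof in the paper.
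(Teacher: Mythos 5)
Your proof is correct and is exactly what the paper intends: the corollary is stated "directly from the definition of $N$," i.e., $N=\avg KK^*.$ combined with $\pi\Delta k=0$ for each $k\in\K$ and linearity of the average. Nothing more is needed.
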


We extend to words in the semigroup generated by $R$ and $B$. \bigskip

\begin{proposition} For every $w\in\SG$, and $k\in\K$, $\pi wk=\pi k$.
\begin{proof} This follows by induction on $l$, the length of $w$. Proposition \ref{proppidelta}
is the result for $l=1$. Let $w_{l+1}$ be a word of
length $l+1$, then $w_{l+1}=$ one of $Rw_l$, $Bw_l$, where $w_l$ has length $l$. Since $w_lk\in\K$ for $k\in\K$, we have, by
Proposition \ref{proppidelta},
\begin{align*}
\pi w_{l+1} k &=\pi R w_lk = \pi B w_lk \cr &= \pi w_l k \hbox to 100 pt{}{\rm (averaging)}\cr
&= \pi k \hbox to 100pt{} \hbox{\rm (by induction)}
\end{align*}\hfill\qedhere
\end{proof} \label{proppwe}
\end{proposition}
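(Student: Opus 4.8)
The plan is to prove $\pi w k = \pi k$ for all $w \in \SG$ and $k \in \K$ by induction on the length $l$ of the word $w$, since $\SG$ is generated by the two colors $R$ and $B$. The base case $l = 1$ is exactly Proposition \ref{proppidelta} (equivalently Proposition \ref{propN}), which gives $\pi R k = \pi B k = \pi k$ for every $k \in \K$; this is the substantive input, and it was obtained by sandwiching equation \eqref{eq:N} for $N = \avg KK^*.$ between $\pi$ and $\pi^\dagger$ and extracting a sum of squared norms $\avg \|\pi \Delta K\|^2.$ that must vanish termwise.

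For the inductive step, suppose the claim holds for all words of length $l$ and let $w_{l+1}$ have length $l+1$. Writing the first letter explicitly, $w_{l+1} = R w_l$ or $w_{l+1} = B w_l$ with $w_l$ of length $l$. The key observation is that $\K$ is an ideal of $\SG$, so $w_l k \in \K$ whenever $k \in \K$; hence $w_l k$ is itself a legitimate argument for the base case. Applying Proposition \ref{proppidelta} to the kernel element $w_l k$ gives $\pi R (w_l k) = \pi B (w_l k) = \pi (w_l k)$, and then the inductive hypothesis applied to $w_l$ gives $\pi w_l k = \pi k$. Chaining these, $\pi w_{l+1} k = \pi w_l k = \pi k$, which closes the induction.

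There is no real obstacle here: the proof is a clean two-line induction once the two facts are in hand, namely (i) the base case from Proposition \ref{proppidelta} and (ii) the ideal property $w_l k \in \K$ for $k \in \K$, which is part of the definition of the kernel recalled in the introduction and Appendix \ref{sec:sgrpskrnls}. The only point requiring a word of care is making sure the base case is being invoked with the \emph{new} kernel element $w_l k$ rather than the original $k$; this is precisely what lets the single-letter result propagate to arbitrary words. One could alternatively phrase the whole argument as: the left-multiplication operators $\pi \mapsto \pi$ restricted to the affine action on $\K$ are all trivial because $\pi R = \pi B = \pi A = \pi$ already holds \emph{after} projecting onto kernel elements, but the induction on word length is the most transparent presentation and matches the structure of the surrounding results.
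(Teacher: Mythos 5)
Your proof is correct and follows essentially the same route as the paper: induction on word length, with the base case supplied by Proposition \ref{proppidelta} and the inductive step obtained by applying that base case to the kernel element $w_l k$ (using that $\K$ is an ideal) and then invoking the inductive hypothesis. The point you flag as needing care --- that the single-letter result must be applied to $w_l k$ rather than to $k$ --- is exactly the pivot of the paper's own argument.
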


\begin{proposition} \label{propBMF}
For any $e\in\E(\K)$, $w\in \SG$, $$\pi we=(1/r)\,\utilde\rho(e) \qquad \hbox{\ and\ }\qquad\pi w ee^*=(1/r)\,u$$
\begin{proof} From Corollary \ref{cor:pideltan}, $\pi\Delta N=0$.
Equivalently, $N\Delta^*\dg\pi.   = 0$. 
Hence, multiplying equation \eqref{eq:N} on the right by  $\dg\pi.$ , 
using $A^*\dg\pi. = \dg\pi.$, we find
$$                               AN\dg\pi.  = N \dg\pi.$$
So $N\dg\pi.$  is fixed by $A$, hence is a constant vector. Transposing back, say
$\pi N = au$ for some constant $a$. Now let $e\in \E(\K)$ and take the $e_i$ with the
same range as $e$ in the averaging defining $N$ as $\avg EE^*.$. Then from Proposition \ref{proppwe} we have
$\pi e e_i=\pi e_i$. Multiplying by $e_i^*$ and averaging yields
$ \pi eN=\pi N$, hence $ \pi eNe^*=\pi N e^*$.
Since the idempotents of a column are a left-zero semigroup, 
$ee_i=e,\,e_i^*e^*=e^*$. Thus, 
$$eNe^*=e\,\alpha_\mu e_\mu e_\mu^*\,e^*=ee^*$$
Thus on the one hand $ \pi eNe^*=\pi N e^*$
and on the other $\pi eNe^*=\pi ee^*$.
Or $\pi ee^* =\pi Ne^*=aue^*=au$ for any idempotent $e$.  Thus, by Prop. \ref{prop:rhoe}, \#2, $\pi e=a\utilde\rho(e)$ and
$\pi e\dg u.=1=ar$ yields $a=1/r$, where $r$ is the rank of the kernel. So, for any $w\in\SG$,
$\pi we=\pi e = (1/r)\utilde\rho(e)$ and Cor. \ref{cor:erhoe}, \#3 yields $\pi wee^*=(1/r)u$, as required.
\end{proof}
\end{proposition}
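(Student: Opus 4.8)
The plan is to prove Proposition \ref{propBMF} in two stages: first pin down the vector $\pi N$, then bootstrap from $N$ to an arbitrary idempotent $e$, and finally extend across the semigroup using Proposition \ref{proppwe}. The starting point is Corollary \ref{cor:pideltan}, which says $\pi\Delta N=0$; transposing gives $N\Delta^*\ud=0$. Multiplying the defining relation \eqref{eq:N}, namely $ANA^*+\Delta N\Delta^*=N$, on the right by $\ud$ and using $A^*\ud=\ud$ (from \eqref{eq:upii}), the $\Delta$-term drops and we get $AN\ud=N\ud$. So $N\ud$ is a right $1$-eigenvector of $A$, hence constant by irreducibility; transposing, $\pi N=au$ for some scalar $a$.

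Next I would leverage Proposition \ref{proppwe} ($\pi wk=\pi k$ for $w\in\SG$, $k\in\K$) together with the left-zero structure of a column of the kernel. Fix an idempotent $e$, and in the average $N=\avg EE^*.$ use the idempotents $e_i$ sharing the range of $e$ (legitimate by the Remark following Proposition \ref{prop:eestar}). From $\pi e e_i=\pi e_i$ (Proposition \ref{proppwe}), multiplying on the right by $e_i^*$ and averaging gives $\pi eN=\pi N$, hence $\pi eNe^*=\pi Ne^*$. On the other side, since the $e_i$ with common range form a left-zero semigroup, $ee_i=e$ and $e_i^*e^*=e^*$, so $eNe^*=\alpha_\mu\, e e_\mu e_\mu^* e^*=ee^*$. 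Comparing the two expressions: $\pi ee^*=\pi Ne^*=au\,e^*=au$, the last equality because $e^*$ is column-stochastic transposed — i.e. $ue^*=u$ since each column of $e^*$ (row of $e$) has a single one.

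From $\pi ee^*=au$ I would then peel off one factor. By Proposition \ref{prop:rhoe}\#2, $ee^*\rho(e)=e$, so $\pi e=\pi ee^*\rho(e)=au\rho(e)=a\,\utilde\rho(e)$. Normalizing via $\pi e\ud$: the right-hand side sums the entries of $\utilde\rho(e)$, which is $r$ (the rank), while the left-hand side is $\pi e\ud=\pi\ud=1$ since $e\ud=\ud$; thus $a=1/r$. Finally, for any $w\in\SG$, Proposition \ref{proppwe} gives $\pi we=\pi e=(1/r)\utilde\rho(e)$, and Corollary \ref{cor:erhoe}\#3 ($e\dg\utilde\rho(e).=\ud$) then yields $\pi wee^*=\pi e\dg\utilde\rho(e).^{\!*}\!$-type manipulation — more precisely $\pi wee^*=(1/r)\utilde\rho(e)e^*=(1/r)\,u$ using $\utilde\rho(e)e^*=u$ (transpose of $e\dg\utilde\rho(e).=\ud$).

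The only genuinely delicate point is the step $eNe^*=ee^*$: one must be careful that the idempotents chosen in the average all lie in the same column as $e$ so that $ee_\mu=e$ and $e_\mu^*e^*=e^*$ hold uniformly, and that the weights $\alpha_\mu$ sum to $1$. Everything else is bookkeeping with the stochasticity relations $ue^*=u$, $e\ud=\ud$, and the eigenvector characterizations already established. I expect no real obstacle beyond keeping the transposes straight, since the functional-analytic skeleton — eigenvector rigidity from \eqref{eq:N} plus the semigroup-theoretic collapse $eNe^*=ee^*$ — is exactly the mechanism the earlier propositions were built to support.
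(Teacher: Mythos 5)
Your overall strategy is the paper's: use Corollary \ref{cor:pideltan} and equation \eqref{eq:N} to pin down $\pi N=au$, then collapse $eNe^*$ to $ee^*$ via the left-zero structure of a column of idempotents together with Proposition \ref{proppwe}, and finally normalize and extend over $w\in\SG$. Your second stage is correct and agrees with the paper step for step. The first stage, however, fails as written because you have substituted the all-ones vector $\ud$ for the stationary vector $\pi^\dagger$ throughout. Concretely: (i) transposing $\pi\Delta N=0$ gives $N\Delta^*\pi^\dagger=0$ (using $N=N^*$), not $N\Delta^*\ud=0$; (ii) the identity you invoke, $A^*\ud=\ud$, is not what \eqref{eq:upii} says --- that equation gives $A\ud=\ud$ --- and $A^*\ud=\ud$ holds only when $A$ is doubly stochastic; the invariance actually available is $A^*\pi^\dagger=\pi^\dagger$, the transpose of $\pi A=\pi$; (iii) even if one granted $AN\ud=N\ud$, constancy of $N\ud$ would give, after transposing, $uN=cu$ rather than $\pi N=au$, so your stated conclusion does not follow from your chain; and indeed $N\ud$ need not be constant in general, since its $i$-th entry is the average cardinality of the partition block containing $i$, while BM/F equalizes the $\pi$-measures of the blocks, not their sizes.

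The repair is exactly the paper's computation: multiply \eqref{eq:N} on the right by $\pi^\dagger$, use $A^*\pi^\dagger=\pi^\dagger$ and $N\Delta^*\pi^\dagger=0$ to kill the $\Delta$-term and obtain $AN\pi^\dagger=N\pi^\dagger$, conclude that $N\pi^\dagger$ is a multiple of $\ud$ by ergodicity of $A$, and transpose (using the symmetry of $N$) to get $\pi N=au$. With that correction, the remainder of your argument --- $\pi eN=\pi N$ from Proposition \ref{proppwe} applied to the idempotents $e_i$ sharing the range of $e$, $eNe^*=ee^*$ from the left-zero column, $\pi ee^*=au$ via $ue^*=u$, then $\pi e=a\utilde\rho(e)$ from Proposition \ref{prop:rhoe} \#2, the normalization $a=1/r$ from $\pi e\ud=1$, and the extension to $\pi we$ and $\pi wee^*$ via Proposition \ref{proppwe} and Corollary \ref{cor:erhoe} \#3 --- is correct and coincides with the paper's proof.
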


Now taking $w\in\K$ yields $\pi k=(1/r)\utilde\rho(k)$, the theorem of Budzban-Mukherjea/Friedman. \bigskip

Averaging the second relation in the above Proposition yields
\begin{corollary} \label{cor:pideltann} For any $w\in\SG$,
$\displaystyle \pi w N =\tfrac{1}{r}\,u$. \end{corollary}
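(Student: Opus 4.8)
The statement to be proved is Corollary \ref{cor:pideltann}: for any $w\in\SG$, $\pi w N = \tfrac{1}{r}u$. The plan is to average the second relation from Proposition \ref{propBMF} over the idempotents of a fixed column, recovering $N$ as an average $\avg EE^*.$, exactly as in the Remark following Proposition \ref{prop:eestar}.

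First I would recall from the Remark after Proposition \ref{prop:eestar} that $N = \avg KK^*. = \avg EE^*.$, where in the latter average it suffices to run over the idempotents $\{e_{\lambda 1}\}$ of a single column (say range class $\R_1$), weighted by the partition measure $\alpha$: that is, $N = \alpha_\lambda\, e_{\lambda 1} e_{\lambda 1}^*$. Next, Proposition \ref{propBMF} gives, for each idempotent $e_{\lambda 1}$ and the fixed word $w$, the identity $\pi w\, e_{\lambda 1} e_{\lambda 1}^* = (1/r)\,u$. Since the right-hand side does not depend on $\lambda$, I would multiply through by $\alpha_\lambda$ and sum over $\lambda$:
\[
\pi w N = \pi w\,\alpha_\lambda e_{\lambda 1}e_{\lambda 1}^* = \alpha_\lambda\,(1/r)\,u = (1/r)\,u,
\]
using $\sum_\lambda \alpha_\lambda = 1$ because $\alpha$ is a probability distribution on the partitions. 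That is the whole argument.

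There is essentially no obstacle here — the work has all been done in Proposition \ref{propBMF}. The only point requiring a moment's care is the interchange of the linear map $w'\mapsto \pi w w'$ with the (finite) convex combination defining $N$, which is immediate by linearity, and the justification that the column-indexed average $\alpha_\lambda e_{\lambda 1}e_{\lambda 1}^*$ indeed equals $N$; this is precisely the content of the Remark after Proposition \ref{prop:eestar}, where it is observed that $ee^*$ depends only on the row (partition) of the cell, so that averaging over any single column reproduces $N = \avg EE^*.$. One could equally phrase the conclusion as saying that $\pi w N$ is the constant vector $u/r$ for every $w$, consistent with Corollary \ref{cor:pideltan} (the case that led to $\pi N = u/r$) and with the interpretation that each block of each partition carries $\pi$-mass $1/r$.
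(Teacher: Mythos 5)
Your argument is correct and is essentially the paper's own proof: the paper obtains the corollary precisely by averaging the second relation $\pi w ee^* = (1/r)\,u$ of Proposition \ref{propBMF} over the idempotents, using $N=\avg EE^*.$ as in the Remark after Proposition \ref{prop:eestar}. You have simply spelled out the averaging (over a single column, weighted by $\alpha$) explicitly, which matches the intended reasoning.
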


\begin{subsubsection}{More on projections}
Referring to the column projections $P_j$ of \S\ref{sec:proj}, we have
$$\pi P_j=\pi N\rho_j=\tfrac{1}{r}\,u \rho_j=\tfrac{1}{r}\,\utilde\rho_j$$
And for row projections $Q_j$, via Proposition \ref{propBMF},
$$\pi Q_j=r \pi e_ie_i^*\omega =u\omega=\pi$$
Since $Q_j$ is a stochastic matrix, $Q_j\ud=\ud$, we have the result
\begin{proposition} Each of the row projections $Q_j$ commutes with $\Omega$.
\end{proposition}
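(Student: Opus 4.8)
The plan is to establish that $Q_j\Omega = \Omega Q_j$ using the two facts already in hand: $\pi Q_j = \pi$ (just derived) and $Q_j$ is a stochastic matrix, so $Q_j\ud = \ud$. Recall that $\Omega = \ud\pi$, since $\Omega$ is the idempotent stochastic matrix with identical rows $\pi$. The strategy is then purely formal: multiply out both products using this factorization of $\Omega$.

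First I would compute $Q_j\Omega = Q_j\ud\pi$. Since $Q_j$ is stochastic, $Q_j\ud = \ud$, so $Q_j\Omega = \ud\pi = \Omega$. Next I would compute $\Omega Q_j = \ud\pi Q_j$. Since $\pi Q_j = \pi$ by the preceding display (using Proposition~\ref{propBMF} together with $u\omega = \pi$), we get $\Omega Q_j = \ud\pi = \Omega$. Hence $Q_j\Omega = \Omega = \Omega Q_j$, which is the claimed commutation relation; in fact both products equal $\Omega$ itself. I would present this as a short chain of equalities:
\begin{align*}
Q_j\Omega &= Q_j\,\ud\pi = \ud\pi = \Omega,\\
\Omega Q_j &= \ud\pi\,Q_j = \ud\pi = \Omega.
\end{align*}

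There is essentially no obstacle here: the only ingredients are the two one-line facts stated immediately before the proposition, plus the standard representation $\Omega = \ud\pi$ from Section~3 (the limiting matrix with identical rows equal to the invariant distribution). If one wanted to be scrupulous, the single point worth a word is why $\pi Q_j = \pi$: this is the displayed computation $\pi Q_j = r\,\pi e_ie_i^*\omega = u\omega = \pi$, which invokes Proposition~\ref{prop:row} for the form of $Q_j$ and Proposition~\ref{propBMF} (in the guise $\pi e_i e_i^* = u$, i.e. the case $w = e_i$) to collapse $r\,\pi e_i e_i^* = u$. Everything else is bookkeeping with $\ud$ and $\pi$.
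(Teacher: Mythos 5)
Your proof is correct and is essentially the paper's own argument: the paper also derives $\pi Q_j=r\,\pi e_ie_i^*\omega=u\omega=\pi$ via Proposition \ref{propBMF}, notes $Q_j\ud=\ud$, and concludes $Q_j\Omega=\Omega=\Omega Q_j$ from $\Omega=\ud\pi$. (Only a cosmetic slip: the identity is $\pi e_ie_i^*=\tfrac1r u$, not $u$, which you in fact use correctly when writing $r\,\pi e_ie_i^*=u$.)
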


\end{subsubsection}

\subsubsection{Local groups}
Let $G_{ij}$ be the local group with partition $\P_i$ and range $\R_j$.
For convenience, when averaging over $G_{ij}$, we use the abbreviated form
 $\displaystyle\sideset{}{'}\sum=\frac{1}{|G_{ij}|}\,\sum$.\bigskip

An important fact is
\begin{lemma}\label{lem:avg}
 The average group element is $\ud\utilde\rho/r$, specifically,
$$\sideset{}{'}\sum_{\substack{{}\\G_{ij}}} k = r^{-1}\ud\utilde\rho_j$$
\end{lemma}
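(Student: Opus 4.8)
The plan is to average the fixed-point equation defining $\N$ over a single local group $G_{ij}$ rather than over all of $\K$, and then identify the resulting fixed object. Let $S = \sideset{}{'}\sum_{G_{ij}} k$ be the average group element. First I would record two elementary facts about $S$. Since every $k\in G_{ij}$ is a stochastic matrix, $S\ud = \ud$; and since every $k\in G_{ij}$ has range $\R_j$, Corollary \ref{cor:erhoe}\,\#1 gives $k\rho_j = k$, hence $S\rho_j = S$, which means the nonzero columns of $S$ sit precisely in the positions indexed by $\R_j$. The target claim $S = r^{-1}\ud\utilde\rho_j$ is exactly the statement that $S$ is the rank-one stochastic matrix whose $i$-th row equals $r^{-1}\utilde\rho_j$ for every $i$, so it suffices to show every row of $S$ is the same and then pin down that common row by a counting/normalization argument.

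The mechanism for ``every row is the same'' should come from the $\N$-relation. Averaging $\tfrac12(\sym{R}{Y}+\sym{B}{Y}) = Y$ is not quite what I want because $k$ itself need not lie in $\N$; instead I would use the left-invariance coming from BM/F. By Proposition \ref{propBMF} (or Theorem \ref{thm:bmf2}), $\pi k = (1/r)\utilde\rho_j$ for every $k\in G_{ij}$, so $\pi S = (1/r)\utilde\rho_j$ already, which is the desired common row. So the real content is: the rows of $S$ do not depend on the row index, i.e. $S = \ud\,(\pi S)$, equivalently $S = \Omega' S$ where $\Omega'$ has all rows equal. To get this I would exploit that left-multiplication by any group element permutes $G_{ij}$: for $k_0\in G_{ij}$, $k_0 S = S$ since $k\mapsto k_0 k$ is a bijection of $G_{ij}$. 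More usefully, I would bring in the idempotent $e = e_{ij}$: since $ee_{ij'} = e$ within a column and, crucially, for the partition row we have (as in the proof of Proposition \ref{propBMF}) relations tying $e$ to the averaging; but the cleanest route is: the columns of $S$ indexed by $\R_j$ are indicator-like, and because $G_{ij}$ acts on $\R_j$ as the full isomorphic copy of the local group (a transitive permutation group on... — no, it is a permutation group, not necessarily transitive).

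So I would instead argue directly from the block structure. Fix a block $\mathcal{B}$ of the partition $\P_i$; it maps bijectively, under the common action, onto $\R_j$, and each $k\in G_{ij}$ restricted to $\mathcal{B}$ is a bijection $\mathcal{B}\to\R_j$. For a fixed $b\in\mathcal{B}$ and fixed $j'\in\R_j$, the entry $S_{b,j'}$ is the fraction of $k\in G_{ij}$ with $k(b)=j'$. Since $G_{ij}$ is a group acting on $\R_j$ via the quotient permutation representation, I claim this fraction is $1/r$: the key point is that $G_{ij}$ acts on $\R_j$ as a group which is transitive on $\R_j$ — indeed the local group contains, for the idempotent $e$, elements realizing every permutation of $\R_j$ up to the kernel of the action, but more to the point the range $\R_j$ is a single orbit because $\K$ is completely simple (the local group acts on its range class as a transitive, in fact the claim is it's the full symmetric group in the example but in general it is some transitive group — this transitivity is what the paper's setup asserts when it says ``each local group is isomorphic to a permutation group acting on its range class'' together with completeness). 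Granting transitivity, Burnside/orbit-counting in the group $G_{ij}$ acting on $\R_j$ gives that the number of $k$ with $k(b)=j'$ is $|G_{ij}|/|\R_j| = |G_{ij}|/r$, independent of $j'$; and for $b,b'$ in the same block the bijections differ by composition, giving the same count; and for $b,b'$ in different blocks one uses that all blocks map onto the same $\R_j$ and the group structure again equalizes. Hence $S_{b,j'} = 1/r$ for all $b\in V$ and $j'\in\R_j$, and $0$ otherwise, i.e. $S = r^{-1}\ud\utilde\rho_j$. As a sanity check, $S\ud = (r/r)\ud = \ud$, consistent with stochasticity, and $\pi S = r^{-1}(\pi\ud)\utilde\rho_j = r^{-1}\utilde\rho_j$, consistent with BM/F.

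\textbf{Main obstacle.} The delicate step is the transitivity / uniform-counting claim: showing that within a single local group $G_{ij}$ the number of elements sending a fixed vertex to a fixed range element is exactly $|G_{ij}|/r$, uniformly. This needs the precise structure of completely simple semigroups — that the local group acts on its range class as a transitive permutation group (equivalently, that the Rees-matrix structure is such that the group coordinate acts freely/transitively appropriately), plus that all blocks of the partition are equinumerous with $\R_j$ and map onto it. If one prefers to avoid invoking orbit-counting, the alternative is to use the already-proved identity $\avg E.=rN\omega$ together with $\pi e = r^{-1}\utilde\rho(e)$ and reconstruct $S$ as $S = e\cdot(\text{average of }G_{ij}) = \ldots$; but the slickest finish is probably: $S = e S$ (left-zero absorption within the column won't help here since we're fixing the cell), so instead note $S$ is fixed on the left by every element of $G_{ij}$, hence by the idempotent $e$, so $S = eS$; combined with $\pi S = r^{-1}\utilde\rho_j$ and the fact that $eS$ has all its rows within a block equal, plus $S\rho_j=S$, one deduces $S = \ud\,(\pi S)$ once one knows $e$ collapses rows appropriately — and that last fact is again exactly the structural input about the partition. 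I would therefore organize the write-up around the clean statement ``$G_{ij}$ acts transitively on $\R_j$, so the $G_{ij}$-average is the uniform stochastic map onto $\R_j$'', cite the completely-simple structure in Appendix \ref{sec:sgrpskrnls} for transitivity, and let BM/F supply the normalization.
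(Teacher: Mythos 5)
Your route is genuinely different from the paper's, and it has a real gap at exactly the point you flag: the transitivity of $G_{ij}$ on $\R_j$. You propose to cite the completely simple structure of Appendix \ref{sec:sgrpskrnls} for this, but complete simplicity does not give transitivity: a rectangular band (all local groups trivial) is completely simple, and there the average over $G_{ij}$ is the idempotent $e_{ij}$ itself, a $0$\,-$1$ matrix, which is not $r^{-1}\ud\utilde\rho_j$ when $r>1$. So the statement is false for a general completely simple semigroup, and your counting argument cannot close without an extra hypothesis. The input that actually works here is strong connectivity of the graph: for $i,j\in\R_j$ there is a word $w\in\SG$ with $iw=j$, and since $ie_{ij}=i$ and $je_{ij}=j$, the element $e_{ij}we_{ij}\in e_{ij}\K e_{ij}=G_{ij}$ sends $i$ to $j$. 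With transitivity secured, your orbit--stabilizer count does give $|\{k\in G_{ij}:k(b^*)=j'\}|=|G_{ij}|/r$; but you should also repair the claim that $k$ restricted to a block is a bijection onto $\R_j$ --- it is \emph{constant} on each block (and blocks need not have $r$ elements). What you need instead is that each block of $\P_i$ meets $\R_j$ in exactly one point $b^*$ (if it contained two, $e_{ij}$ would identify them while fixing both), so $k(b)=k(b^*)$ reduces every row to a row indexed by $\R_j$, where transitivity applies.

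For comparison, the paper's proof bypasses all of this. It identifies $G_{ij}$ with $e\K e$ for $e=e_{ij}$, observes that the Haar average over the local group equals the $\lambda$-average $\avg eKe.$ (the pushforward of $\alpha\times\omega\times\beta$ under $k\mapsto eke$ is Haar measure on $e\K e$, by the Rees structure in Theorem \ref{thm:idem}), and then computes
$$\avg eKe.=e\Omega e=e\ud\pi e=\ud\pi e=r^{-1}\ud\utilde\rho_j$$
using BM/F (Proposition \ref{propBMF}) in the last step. The equidistribution you are trying to extract from the group action is already packaged in $\Omega=\ud\pi$ together with $\pi e=r^{-1}\utilde\rho(e)$; you invoked BM/F only to identify the common row $\pi S$, whereas it can be made to do the entire job in one line.
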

\begin{proof}
From the structure of the kernel, for a fixed $e$, $e\K e$ is the local group having $e$ as local identity.
So take $e=e_{ij}$, the identity for $G_{ij}$.
Since $\Omega=\avg K.$, we have, via Prop.\tsp\ref{propBMF},
$$\avg eKe.=e\Omega e=e\ud\pi e=\ud\pi e=r^{-1}\ud\utilde\rho(e)$$\hfill\qedhere
\end{proof}

Now we see that to average over the kernel, we need only average the above relation over the ranges. This yields
\begin{equation}\label{eq:omm}
\Omega=r^{-1}\ud\beta_\mu\utilde\rho_\mu=r^{-1}\ud u\beta_\mu\rho_\mu=r^{-1}J\beta_\mu\rho_\mu
\end{equation}
since $\utilde\rho(e)=u\rho(e)$. 
Comparing diagonals, we thus derive equation \eqref{eq:omega}
$$\omega=r^{-1}\beta_\mu \rho_\mu $$
used in computing the row projections, finding as well
$$\pi=r^{-1}\beta_\mu\utilde\rho_\mu $$
We are now in a position to prove the first relation of Proposition \ref{prop:level2}:
\begin{theorem}
$$\Mat(\Omega_{\rm vec}\Omega_{\otimes 2})=\frac{n}{r^2}\,\avg\utilde\rho^\dagger \utilde\rho.=\frac{n}{r^2}\avg \rho J\rho .$$
\begin{proof}
\begin{align*}
\avg K^*\Omega K.&=\avg K^*\ud\pi K.=r^{-1}\avg K^*\ud\utilde\rho(K).\\
&=r^{-1}\sumprime_{i,j}\sumprime_{G_{ij}} k^*\ud\utilde\rho_j
\end{align*}
summing over the cells of the kernel, the primes indicating averaging. By Lemma \ref{lem:avg}, we have
\begin{align*}
r^{-1}\sumprime_{i,j}\sumprime_{G_{ij}}k^*\ud\utilde\rho_j&=r^{-2}\sumprime_{i,j}\dg \utilde\rho_j.u\ud \utilde\rho_j
=nr^{-2}\sumprime_{i,j}\dg \utilde\rho_j.\utilde\rho_j\\
&=nr^{-2}\sumprime_{i,j}\rho_j\ud u\rho_j=nr^{-2}\avg \rho J\rho.
\end{align*}\hfill\qedhere
\end{proof}
\end{theorem}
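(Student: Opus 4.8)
The plan is to unwind the average $\avg K^*\Omega K.$ by decomposing the averaging over $\K$ into an averaging over cells $(i,j)$ followed by an averaging over each local group $G_{ij}$, exactly as in the proof of equation \eqref{eq:omm}. First I would substitute $\Omega=\ud\pi$ (all rows equal to $\pi$) and then apply BM/F, or rather its consequence $\pi K=r^{-1}\utilde\rho(K)$ from Theorem \ref{thm:bmf2}, to replace $\pi K$ by $r^{-1}\utilde\rho(K)$ inside the average. This reduces the expression to $r^{-1}\avg K^*\ud\utilde\rho(K).$. Since $\utilde\rho(K)$ depends only on the range class $\R_j$ of $K$, not on the individual group element, the inner sum over $G_{ij}$ only touches the factor $K^*$, so I can pull $\ud\utilde\rho_j$ out and am left with $\bigl(\sumprime_{G_{ij}} k^*\bigr)\ud\utilde\rho_j$.

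Next I would invoke Lemma \ref{lem:avg}: the average group element over $G_{ij}$ is $r^{-1}\ud\utilde\rho_j$, so its transpose, the average of $k^*$, is $r^{-1}\dg\utilde\rho_j.u$. Feeding this back in gives $r^{-2}\dg\utilde\rho_j.u\ud\utilde\rho_j$ for each cell. Here $u\ud=n$ is just the scalar $n$, so each cell contributes $nr^{-2}\dg\utilde\rho_j.\utilde\rho_j$. Averaging over the cells — which, since the integrand depends only on $j$, is the same as averaging over the range classes with weight $\beta$ — yields $nr^{-2}\avg\utilde\rho^\dagger\utilde\rho.$, establishing the first equality. For the second equality I would simply rewrite $\utilde\rho_j=u\rho_j$ and $\dg\utilde\rho_j.=\rho_j\ud$ (using $\diag{v}\ud=\dg v.$, as in Corollary \ref{cor:erhoe}), so that $\dg\utilde\rho_j.\utilde\rho_j=\rho_j\ud u\rho_j=\rho_j J\rho_j$, whence $\avg\utilde\rho^\dagger\utilde\rho.=\avg\rho J\rho.$.

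The only genuine input beyond bookkeeping is BM/F (Theorem \ref{thm:bmf2}), which is why Proposition \ref{prop:level2} flagged this relation as deferred; everything else is the same cell-by-cell averaging already used to derive \eqref{eq:omega} and \eqref{eq:omm}. The main point to be careful about is the interchange of the two layers of averaging and the observation that $\utilde\rho(K)$ is a cell-invariant (indeed column-invariant) quantity, so that the local-group average acts only on $K^*$; once that is seen, Lemma \ref{lem:avg} does all the work and no genuine obstacle remains.
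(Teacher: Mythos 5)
Your proposal is correct and follows essentially the same route as the paper: substitute $\Omega=\ud\pi$, apply BM/F to get $\pi K=r^{-1}\utilde\rho(K)$, decompose the average into cells and local groups, apply Lemma \ref{lem:avg} to the inner average of $k^*$, and convert $\dg\utilde\rho_j.\utilde\rho_j$ to $\rho_j J\rho_j$ via $\utilde\rho_j=u\rho_j$. All the key steps, including the observation that $\utilde\rho(K)$ is constant on each cell so the local-group average acts only on $K^*$, match the paper's argument.
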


\section{Properties of $M$, $N$, and $\Omega$}
Here we look at some relations among the main operators of interest, including $\tilde M$. \bigskip

Start with
\begin{proposition}\hfill\break
1. $\Omega N=r^{-1}J\vstrut$.\medskip

2. $M\Omega=n\Omega^*\Omega$.\medskip

\begin{proof} $\Omega N=\ud\pi N=r^{-1}\ud u=r^{-1}J$ as required. And, noting that $J\Omega=n\Omega$,
$$M\Omega=\avg K^*JK\Omega.=\avg K^*J\Omega.=n\avg K^*.\Omega=\Omega^*\Omega$$
via $k\Omega=\Omega$, for all $k\in\K$.
\end{proof}
\end{proposition}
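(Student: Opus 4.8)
The plan is to prove the two identities directly from the definitions of $M$, $N$, and $\Omega$, using the kernel-averaging representation $\Omega=\avg K.$, $N=\avg KK^*.$, $M=\avg K^*JK.$, together with the already-established facts that every $k\in\K$ is stochastic (so $k\ud=\ud$ and $kJ=J$) and the Budzban--Mukherjea/Friedman relation $\pi k=\tfrac1r\utilde\rho(k)$ from Theorem \ref{thm:bmf2}, equivalently $\pi N=\tfrac1r u$ from Corollary \ref{cor:pideltann} (taking $w$ trivial).

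For part 1, I would write $\Omega=\ud\pi$ (the limiting matrix has identical rows equal to $\pi$), so that $\Omega N=\ud(\pi N)$. By Corollary \ref{cor:pideltann}, $\pi N=\tfrac1r u$, hence $\Omega N=\tfrac1r\ud u=\tfrac1r J$. This is a one-line computation once the factored form $\Omega=\ud\pi$ and the value of $\pi N$ are in hand; no obstacle here.

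For part 2, start from $M\Omega=\avg K^*JK.\,\Omega=\avg K^*JK\Omega.$. Since every $k\in\K$ is stochastic, $K\Omega=\Omega$ (each row of $\Omega$ is a right-fixed vector of $A$, and more directly $k\Omega=k\ud\pi=\ud\pi=\Omega$), so this collapses to $\avg K^*J\Omega.$. Next use $J\Omega=\ud u\,\ud\pi=n\,\ud\pi=n\Omega$, giving $M\Omega=n\avg K^*\Omega.=n\avg K^*.\,\Omega=n\Omega^*\Omega$, using $\avg K.=\Omega$ so $\avg K^*.=\Omega^*$. Again each move is routine; the only thing to be careful about is pulling the constant-in-$K$ matrix $\Omega$ outside the average, which is legitimate since averaging is linear.

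The main (minor) obstacle is purely bookkeeping: making sure the factorization $\Omega=\ud\pi$ and the identities $J\ud=n\ud$, $u\,\ud=n$, $kJ=J$, $k\ud=\ud$ are invoked consistently with the row-vector conventions of the paper, and that $\pi N=\tfrac1r u$ is cited correctly from the equipartitioning section. There is no real analytic or combinatorial difficulty; the proposition is a formal consequence of the averaging representations and BM/F.
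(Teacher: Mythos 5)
Your proposal is correct and follows essentially the same route as the paper: part 1 via $\Omega=\ud\pi$ and $\pi N=r^{-1}u$, and part 2 via $k\Omega=\Omega$, $J\Omega=n\Omega$, and $\avg K^*.=\Omega^*$. No gaps; the argument matches the paper's proof step for step.
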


And consequently,
\begin{corollary} $NM$ commutes with $\Omega$.
\begin{proof}
First,
$$NM\Omega=n N\Omega^*\Omega=n(\Omega N)^*\Omega=(n/r)J\Omega=(n^2/r)\Omega$$
And second,
\begin{align*}
\Omega NM&=r^{-1}JM=r^{-1}\avg JK^*JK.=r^{-1}\avg (KJ)^*JK.\\
&=r^{-1}\avg J^2K.=r^{-1}nJ\Omega=(n^2/r)\Omega
\end{align*}
as required.
\end{proof}
\end{corollary}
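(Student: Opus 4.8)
The plan is to show that $NM$ commutes with $\Omega$ by verifying $NM\Omega = \Omega NM$, computing both sides separately and checking they agree. Both computations should reduce to a scalar multiple of $\Omega$, which makes the comparison transparent. Throughout I would lean on the three facts already established just above: (i) $\Omega N = r^{-1}J$ (part 1 of the preceding proposition), (ii) $M\Omega = n\,\Omega^*\Omega$ (part 2), and (iii) the all-purpose identities $k\Omega = \Omega$ and $kJ = J$ valid for every $k \in \K$ (kernel elements are binary stochastic), plus $J\Omega = n\Omega$.

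For the first computation, I would start from $NM\Omega$, substitute $M\Omega = n\,\Omega^*\Omega$ to get $n\,N\Omega^*\Omega$, then rewrite $N\Omega^* = (\Omega N)^*$ using that $N$ is symmetric ($N = \avg KK^*. $ is manifestly symmetric) and apply (i) to obtain $(\Omega N)^* = (r^{-1}J)^* = r^{-1}J$. That leaves $n r^{-1} J\Omega = (n^2/r)\,\Omega$ after using $J\Omega = n\Omega$. This is exactly the short chain already sketched in the excerpt, so I would just present it as the displayed equation
\begin{align*}
NM\Omega &= n\,N\Omega^*\Omega = n\,(\Omega N)^*\Omega = (n/r)\,J\Omega = (n^2/r)\,\Omega.
\end{align*}

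For the second computation, I would start from $\Omega NM$ and apply (i) immediately: $\Omega N = r^{-1}J$, so $\Omega NM = r^{-1}JM$. Now expand $M = \avg K^* J K.$ to write $JM = \avg J K^* J K.$, and rewrite $J K^* = (K J)^*$; since $kJ = J$ this is $(KJ)^* = J^* = J$. Hence $JM = \avg J \cdot JK. = \avg J^2 K.$. Using $J^2 = nJ$ and then $kJ$... — more carefully, $\avg J^2 K. = n\avg JK. = nJ\avg K. = nJ\Omega = n^2\Omega$. So $\Omega NM = r^{-1} n^2 \Omega = (n^2/r)\,\Omega$, matching the first computation. Combining the two displays gives $NM\Omega = \Omega NM$, which is the claim.

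I do not anticipate a genuine obstacle here — the proof is a routine manipulation once the preceding proposition is in hand. The one place to be careful is the bookkeeping with transposes and with where the averaging bracket $\avg\cdot.$ sits relative to the fixed matrix $J$: one must remember that $\avg\cdot.$ is linear, that $J$ and $\Omega$ can be pulled in and out of the average freely since they do not depend on the random $K$, and that $J = J^*$ and $N = N^*$. As long as those symmetry facts are used correctly, both sides collapse to $(n^2/r)\,\Omega$ and the commutation follows.
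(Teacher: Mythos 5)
Your proposal is correct and follows essentially the same route as the paper: both halves use $M\Omega = n\,\Omega^*\Omega$ with the symmetry of $N$ and $\Omega N = r^{-1}J$ for the first computation, and $\Omega N = r^{-1}J$ together with $JK^* = (KJ)^* = J$ inside the average for the second, each collapsing to $(n^2/r)\Omega$. No gaps.
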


Observe in the proof the relation
$$JM=n^2\Omega$$

The case of doubly stochastic $A$ turns out to be particularly interesting. \par
Recall the basic fact that two symmetric matrices commute if and only if their product is symmetric.

\begin{theorem} If $A$ is doubly stochastic, then $\{M, N, J\}$ generate a commutative algebra.
\begin{proof} If $A$ is doubly stochastic, then $u=n\pi$ and $n\Omega=J$. Thus 
$JM=n^2\Omega=nJ$ is symmetric. And $JN=n\Omega N=(n/r)J$ is symmetric. So $J$ commutes with
$M$ and $N$. We check that $M$ and $N$ commute:
\begin{align*}
MN &= \avg K^*{\dg  u.}uK.N\\ &= n^2\,\avg K^*{\dg  \pi.}\pi KN.\\
&= (n^2/r)\,\avg K^*{\dg  \pi.}u.\\ &= (n^2/r)\,\avg \Omega K.^*\\ &= (n^2/r)\,(\Omega^*)^2 =(n/r)\,J\,.\qedhere
\end{align*}
which is symmetric as well.
\end{proof}
\end{theorem}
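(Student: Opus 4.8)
The plan is to verify that the three generators $M$, $N$, $J$ pairwise commute, using the criterion that two symmetric matrices commute if and only if their product is symmetric. All three matrices are symmetric: $J$ obviously, $N=\avg KK^*.$ is a sum of symmetric matrices $kk^*$, and $M=\avg K^*JK.$ is a sum of symmetric matrices $k^*Jk$. So it suffices to show each of the three products $JM$, $JN$, $MN$ is symmetric; commutativity of the algebra they generate then follows since any product of symmetric commuting matrices is again symmetric.

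First I would invoke the standing hypothesis that $A$ is doubly stochastic. Since $A$ is stochastic, $\pi A=\pi$; double stochasticity gives $uA=u$, and uniqueness of the invariant distribution forces $\pi=u/n$, hence $\omega=\diag\pi=I/n$ and $n\Omega=n\ud\pi=\ud u=J$. Next I would handle the two easy products. From the relation $JM=n^2\Omega$ recorded just above the theorem (which comes out of the corollary's proof, $\Omega NM=r^{-1}JM=(n^2/r)\Omega$, or directly from $M\Omega=n\Omega^*\Omega$ together with $J=n\Omega$), we get $JM=n^2\Omega=nJ$, which is symmetric. For $JN$, use part 1 of the preceding proposition, $\Omega N=r^{-1}J$: multiplying by $n$ gives $JN=n\Omega N=(n/r)J$, again symmetric. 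Hence $J$ commutes with both $M$ and $N$.

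The remaining point, and the only one requiring a genuine computation, is that $MN=NM$, equivalently that $MN$ is symmetric. Here I would write $M=\avg K^*JK.=\avg K^*\ud u K.$ and, using $\pi=u/n$ so that $\ud u=n\ud\pi=n\ud\pi$, express things through $\Omega=\ud\pi$; then $MN=n^2\avg K^*\dg\pi.\pi K.N$. The key input is Friedman's theorem in the form $\pi K=\frac1r\utilde\rho(K)$ combined with its corollary $\pi KK^*=\frac1r u$, so that $\pi KN=\frac1r u$ (Corollary \ref{cor:pideltann}), collapsing the factor and leaving $MN=(n^2/r)\avg K^*\dg\pi.u.=(n^2/r)\avg\Omega K.^*=(n^2/r)(\Omega^*)^2=(n/r)J$, where the last step uses $k\Omega=\Omega$ for all $k\in\K$ and $n\Omega^*=J$. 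Since $(n/r)J$ is symmetric, $M$ and $N$ commute.

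The only delicate step is the collapse in the $MN$ computation: one must be careful that the averaging over $\K$ factors correctly and that the hypotheses of Corollary \ref{cor:pideltann} (namely $\pi w N=\frac1r u$ for $w\in\SG$, here with $w=K$) genuinely apply inside the average — but this is exactly what Friedman's theorem and Proposition \ref{propBMF} were set up to deliver, so there is no real obstacle, only bookkeeping. Once $MN=(n/r)J$ is in hand, every generator-product is a scalar multiple of $J$ or of $\Omega\propto J$, all symmetric, so the algebra generated by $\{M,N,J\}$ is commutative.
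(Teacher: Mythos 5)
Your proposal is correct and follows essentially the same route as the paper: the same reductions $JM=n^2\Omega=nJ$ and $JN=n\Omega N=(n/r)J$, and the same computation of $MN=(n/r)J$ by writing $J=n\,\pi^\dagger\pi\cdot n$ inside the average and collapsing $\pi KN=(1/r)u$ via the corollary to Friedman's theorem. Your explicit remarks that all three generators are symmetric and that pairwise-commuting symmetric generators yield a commutative algebra merely make precise what the paper leaves implicit.
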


Another feature involves $\tilde M$.
\begin{theorem}\label{thm:nom}
$$N\tilde M=\frac{n}{r}\,\Omega$$
\end{theorem}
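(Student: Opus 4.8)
The plan is to express $\tilde M$ and $N$ using the structural formulas already derived and then to carry out the averaging over the cells of the kernel. Recall that from Proposition~\ref{prop:level2} we have $\tilde M = \dfrac{n}{r^2}\,\avg\utilde\rho^\dagger\utilde\rho. = \dfrac{n}{r^2}\,\avg\rho J\rho.$, and from the Remark following Proposition~\ref{prop:eestar} together with Proposition~\ref{prop:col} we have $N=\avg EE^*. = \avg KK^*.$ with $N\rho_j = P_j = \alpha_\lambda e_{\lambda j}$. The key point to exploit is the interplay between $N$ and the range matrices $\rho_j$: multiplying $N$ on the right by $\rho_j$ collapses it to the column projection $P_j$, whose rows are range state vectors.

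First I would write $N\tilde M = \dfrac{n}{r^2}\,N\avg\rho_\mu J\rho_\mu. = \dfrac{n}{r^2}\,\beta_\mu\, (N\rho_\mu)\, J\rho_\mu = \dfrac{n}{r^2}\,\beta_\mu\, P_\mu J\rho_\mu$, pulling the $\beta$-average out front and using $N\rho_\mu = P_\mu$. Next, using $P_\mu = \alpha_\lambda e_{\lambda\mu}$ and that each $e_{\lambda\mu}$ is stochastic so $e_{\lambda\mu}J = J$, I get $P_\mu J = \alpha_\lambda e_{\lambda\mu} J = J$ since the $\alpha$'s sum to one. Hence $N\tilde M = \dfrac{n}{r^2}\,\beta_\mu\, J\rho_\mu = \dfrac{n}{r^2}\, J\,\beta_\mu\rho_\mu$. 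Now invoke equation~\eqref{eq:omega}, $\omega = r^{-1}\beta_\mu\rho_\mu$, i.e. $\beta_\mu\rho_\mu = r\omega$, so $N\tilde M = \dfrac{n}{r^2}\cdot r\, J\omega = \dfrac{n}{r}\, J\omega$. Finally, by equation~\eqref{eq:omm} (or directly since $\Omega = \ud\pi$ and $J\omega = \ud u\,\diag\pi = \ud\pi = \Omega$, using $u\,\diag\pi = \pi$), we have $J\omega = \Omega$, giving $N\tilde M = \dfrac{n}{r}\,\Omega$, as claimed.

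The steps are essentially bookkeeping once the right identities are assembled, so there is no serious obstacle; the one place to be careful is the order of operations in $N\avg\rho_\mu J\rho_\mu.$ — one must keep $N$ adjacent to the \emph{left} factor $\rho_\mu$ so that the collapse $N\rho_\mu = P_\mu$ applies, and then separately use stochasticity of the idempotents to kill the middle $J$. I would present the computation as a single \texttt{align*} display. As a cross-check, one can verify $\Omega N\tilde M = \dfrac{n}{r}\Omega^2 = \dfrac{n}{r}\Omega$ is consistent with $\Omega N = r^{-1}J$ and $J\tilde M$: indeed $\Omega N\tilde M = r^{-1}J\tilde M = r^{-1}\cdot\dfrac{n}{r^2}\,\beta_\mu J\rho_\mu J\rho_\mu$, and $\rho_\mu J\rho_\mu$ has row sums equal to $r$ times $\utilde\rho_\mu$, so $J\rho_\mu J\rho_\mu = r J\rho_\mu$, recovering the same answer after using \eqref{eq:omega} and $J\omega=\Omega$.
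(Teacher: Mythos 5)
Your proof is correct and follows essentially the same route as the paper: write $\tilde M=\frac{n}{r^2}\avg\rho J\rho.$, collapse $N\rho_\mu$ to the column projection $P_\mu$, use $P_\mu J=J$, and average with $\beta_\mu\rho_\mu=r\omega$ (the paper cites \eqref{eq:omm} directly, which is the same identity). No gaps.
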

\begin{proof}
We have
$$N \tilde M=nr^{-2} N \avg \rho J\rho.$$
Note that for any $P_j$, $P_j J=J$. Fixing a range, we drop subscripts for convenience. And
with $P=P_j$ the corresponding column projection,
$$N\rho J\rho=P J\rho=J\rho$$
By equation \eqref{eq:omm}, averaging yields $r\Omega$. Hence the result.
\end{proof}

\section{Zeon hierarchy}
Now we pass from the tensor hierarchy to the zeon hierarchy by looking at representations of the semigroup
$F_\circ(V)$ acting on zeon Fock space. We begin with the basic definitions and constructions. 
Then we proceed with statements parallelling those of \S\ref{sec:tensorh}, including proofs when they
illustrate important differences between the two systems. \bigskip

\begin{remark} The introductory material in this section is largely taken from \cite{PF}.
\end{remark}

Consider the exterior algebra generated by a chosen basis $\{\e_i\}\subset V$, with relations
$\e_i\wedge \e_j=-\e_j\wedge \e_i$. 
Denoting multi-indices by roman capital letters $\rI=(i_1,i_2,\ldots,i_k)$, $\rJ$, $\rK$, etc.,
at level $k$, a basis for $V^{\wedge k}$ is given by 
$$\e_{\rI}=\e_{i_1}\wedge\cdots \wedge \e_{i_k}$$
with $\rI$ running through all $k$-subsets
of $\{1, 2,\ldots,n\}$, i.e., $k$-tuples with distinct components. For $\Mt(f)$, $f\in F(V)$, define the matrix 
$$(\spow{\Mt(f)}{k})_{\rI\rJ}=|(\Mt(f)^{\wedge k})_{\rI\rJ}|$$
taking absolute values entry-wise.
It is important to observe that we are not taking the fully symmetric representation of $\End(\V)$, 
which would come by looking at the action on boson Fock space, spanned by
symmetric tensors. However, note that the fully symmetric representation is given by maps induced by the
action of $\Mt(f)$ on the algebra generated by commuting variables $\{\e_i\}$.
We take this viewpoint as the starting point of the construction of the zeon Fock space, $\Z$, to be
defined presently. \bigskip

\begin{definition}     
A {\sl zeon algebra\/} is a commutative, associative algebra
generated by elements $\e_i$ such that $\e_i^2=0$, $i\ge 1$.
\end{definition}

For a standard zeon algebra, $\Z$, the elements $\e_i$ are finite in number, $n$,
and are the basis of an $n$-dimensional vector space, $\V\approx \QQ^n$. 
We assume no further relations among the generators $\e_i$.
Then the $k^{\rm th}$ {\sl zeon tensor power\/} of $\V$, denoted $\spow{\V}{k}$,
is the degree $k$ component of the graded algebra $\Z$,  with basis 
$$ \e_{\rI}=\e_{i_1}\cdots \e_{i_k}$$
analogously to the exterior power except now the variables commute. The assumptions on the $\e_i$ imply that
$\spow{\V}{k}$ is isomorphic to the subspace of symmetric tensors 
spanned by elementary tensors with no repeated factors. {\it As vector spaces},
$$ \spow{\V}{k} \approx \V^{\wedge k} $$

The {\sl zeon Fock space\/} is $\Z$ presented as a graded algebra
$$ {\Z}=\QQ\oplus (\bigoplus_{k\ge1}\spow{\V}{k})$$
Since $\V$ is finite-dimensional, $k$ runs from 1 to $n=\dim \V$.\bigskip

A linear operator $W \in\End(\V)$ extends to the operator $\spow{W}{k}\in \End(\spow{\V}{k})$.
The {\sl second quantization\/} of $W$ is the induced map on $\Z$. \bigskip

For the exterior algebra, the $\rI\rJ^{\it th}$ component of $W^{\wedge k}$ 
is the determinant of the corresponding submatrix of $W$, with
rows indexed by $\rI$ and columns by $\rJ$. Having dropped the signs,
the $\rI\rJ^{\it th}$ component of $\spow{W}{k}$ is the permanent of the corresponding submatrix of $W$.\bigskip

For $W$ of the form $\Mt(f)$ corresponding to a function $f\in F(V)$, the resulting components of 
$\spow{W}{k}$ are exactly the absolute values of the entries of $W^{\wedge k}$, as we wanted.
At each level $k$, there is an induced map
$$ \End(\V) \to \End(\spow{\V}{k})\ ,\qquad \Mt(f)\to \spow{\Mt(f)}{k}$$
satisfying 

\begin{equation}\label{zhom}
\spow{(\Mt(f_1f_2))}{k}=\spow{(\Mt(f_1)\Mt(f_2))}{k}=\spow{\Mt(f_1)}{k}\spow{\Mt(f_2)}{k}
\end{equation}

giving, for each $k$, a representation of the semigroup $F_\circ(V)$ as endomorphisms
of $\spow{\V}{k}$. However, for general $W_1$, $W_2$, the homomorphism property, \eqref{zhom},
no longer holds, i.e., $ \spow{(W_1W_2)}{k}$ does not necessarily equal $\spow{W_1}{k}\,\spow{W_2}{k}$.
It is not hard to see that a sufficient condition is that
$W_1$ have at most one non-zero entry per column or that $W_2$ have at most one non-zero entry per row. 
For example, if one of them is diagonal,  as well as  the case where both correspond to functions.\bigskip

What is the function, $f_k$, corresponding to $\spow{\Mt(f)}{k}$, i.e., such that $M(f_k)=\spow{\Mt(f)}{k}$ ?
For degree 1, we have from \eqref{Xf}
\begin{equation}
\e_i\Mt(f)=\e_{f(i)}
\end{equation}

And for the induced map at degree $k$, taking products in $\Z$,

\begin{align*}
\e_{\rI}\spow{\Mt(f)}{k} &=(\e_{i_1}\Mt(f))\,(\e_{i_2}\Mt(f))\cdots (\e_{i_k}\Mt(f)) \cr
                                &= \e_{f(i_1)}\,\e_{f(i_2)}\cdots \e_{f(i_k)}
\end{align*}

We see that the degree $k$ maps are those induced on $k$-subsets of $V$ mapping
$$\{i_1,\ldots,i_k\} \to \{f(i_1),\ldots,f(i_k)\}$$
with the property that the image in the zeon algebra is zero if $f(i_l)=f(i_m)$ for any pair $i_l,i_m$.
Thus the second quantization of $\Mt(f)$ corresponds to the induced map,
the second quantization of $f$, extending the domain of $f$ from $V$ to
the power set $2^V$.\bigskip

Now we proceed to the degree 2 component in the zeon case.

\subsection{The degree 2 component of $\Z$}

Working in degree 2, we denote indices $\rI=(i,j)$, as usual, instead of $(i_1,i_2)$.\bigskip

For given $n$, $X$, $X'$, etc., are vectors in $\spow{\V}{2}\approx\QQ^{\binom{n}{2}}$.\bigskip

As a vector space, $\V$ is isomorphic to $\QQ^n$. 

Denote by ${\rm Sym}(\V)$ the space of
symmetric matrices acting on $\V$.\bigskip

{\bf Definition}\quad
The mapping  
$${\rm Mat}\colon\,\spow{\V}{2}\to {\rm Sym}(\V)$$
is the linear embedding taking
the vector $X=(x_{ij})$ to the symmetric matrix $\hat X $ with components 
$$\hat X_{ij}=
\begin{cases}
x_{ij}, & \text{for } i<j \cr
0, & \text{for } i=j
\end{cases}
$$
and the property $\hat X_{ji}=\hat X_{ij}$ fills out the matrix.\bigskip

We will use the explicit notation ${\rm Mat}(X)$ as needed for clarity.\bigskip

Equip $\spow{\V}{2}$ with the inner product 
$$\avg X,X'.=\frac12\,\tr \hat X\hat X'=X_{\lambda\mu} X'_{\lambda\mu}$$

\subsection{Basic Identities}

Multiplying $X$ with $u$, we observe that
\begin{equation}\label{eqtrace1}
X\dg u.=\frac12\,\tr \hat X J=u\dg  X.
\end{equation}
Observe also that if $D$ is diagonal, then $\tr D=\tr DJ$.\bigskip

\begin{proposition}\label{propBR}  {\it (Basic Relations)} We have\bigskip

1. $\Mat(X\spow{A}{2}) = \symm{A}{\hat X}-D^+$, where $D^+$ is a diagonal matrix satisfying $\vstrut\tr D^+=\tr \symm{A}{\hat X}$.\bigskip

2. $\Mat(\spow{A}{2}\dg  X.) = \sym{A}{\hat X}-D^-$, where $D^-$ is a diagonal matrix satisfying $\vstrut\tr D^-=\tr \sym{A}{\hat X}$.\bigskip

3. If $A$ and $X$ have nonnegative entries, then $D^+$ and $D^-$ have nonnegative entries. In particular, in that case,
vanishing trace for $D^{\pm}$ implies vanishing of the corresponding matrix.
\end{proposition}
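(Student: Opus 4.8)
The plan is to compute $\Mat(X\spow{A}{2})$ directly from the zeon tensor-power formula and compare with the known tensor (non-zeon) computation of Basic Relation 1 in Section \ref{sec:two}, identifying the discrepancy as a diagonal matrix. First I would recall that $\spow{A}{2}$ acts on $\spow{\V}{2}$ with basis $\{\e_i\e_j : i<j\}$, and that its matrix entries are permanents of $2\times 2$ submatrices: $(\spow{A}{2})_{(ab),(cd)} = A_{ac}A_{bd} + A_{ad}A_{bc}$ for $a<b$, $c<d$. Computing $(X\spow{A}{2})_{cd}$ for $c<d$ and then symmetrizing, one gets, for $i \ne j$, an expression that matches $(A^*\hat X A)_{ij} = (\symm{A}{\hat X})_{ij}$ — here one uses that $\hat X$ is symmetric with zero diagonal, so the sums over all $\lambda,\mu$ (including $\lambda = \mu$, which contribute zero) reproduce exactly the permanent-weighted sums over the zeon basis. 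Thus the off-diagonal entries of $\Mat(X\spow{A}{2})$ and of $\symm{A}{\hat X}$ agree, so their difference $D^+ := \symm{A}{\hat X} - \Mat(X\spow{A}{2})$ is diagonal.

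Next I would pin down the trace of $D^+$. By construction $\Mat$ lands in matrices with zero diagonal (the zeon $\Mat$ map sends $\spow{\V}{2}$ into symmetric matrices vanishing on the diagonal), so $\tr \Mat(X\spow{A}{2}) = 0$, whence $\tr D^+ = \tr \symm{A}{\hat X}$. Part 2 is entirely parallel: use $(\spow{A}{2}\dg X.)$, the right action, compare with $\sym{A}{\hat X} = A\hat X A^*$ off the diagonal, and set $D^- := \sym{A}{\hat X} - \Mat(\spow{A}{2}\dg X.)$, again diagonal with $\tr D^- = \tr \sym{A}{\hat X}$ since $\Mat(\cdot)$ has zero diagonal.

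For part 3, I would argue entrywise. The diagonal entry $(D^+)_{ii}$ equals $(\symm{A}{\hat X})_{ii} = \sum_{\lambda,\mu} A_{\lambda i}\hat X_{\lambda\mu} A_{\mu i}$, which is a sum of products of nonnegative quantities when $A$ and $X$ (hence $\hat X$) have nonnegative entries; alternatively, $(D^+)_{ii}$ is visibly the "lost" contribution from repeated-index terms killed by the zeon relation $\e_i^2 = 0$, i.e. terms $A_{\lambda i}A_{\lambda i}\hat X_{\lambda\lambda}$... — more carefully, it is the part of the symmetric-tensor sum supported on the diagonal of $\spow{\V}{2}$, which is manifestly a sum of nonnegative terms. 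Either way $D^+ \ge 0$ entrywise, and likewise $D^- \ge 0$. Since a nonnegative diagonal matrix with zero trace is zero, the last sentence follows. The main obstacle I anticipate is bookkeeping: carefully matching the permanent expansion of $\spow{A}{2}$ against $A^*\hat X A$ while tracking exactly which index-coincidence terms are dropped, so that the residual is cleanly seen to be diagonal and nonnegative — this is where a sign or factor-of-two slip is most likely, given the $\frac12$ in the inner product and the $i<j$ convention in the definition of $\hat X$.
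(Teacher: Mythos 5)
Your proof is correct and takes essentially the same route as the paper: compare the permanent expansion of $X\spow{A}{2}$ with $\symm{A}{\hat X}$ (the paper does this bookkeeping with a theta symbol for the $i<j$ convention), note that $\Mat$ produces zero-diagonal matrices so the difference $D^+$ is diagonal with $\tr D^+=\tr\symm{A}{\hat X}$, and your entrywise argument for part 3, $(D^+)_{ii}=(\symm{A}{\hat X})_{ii}=A_{\lambda i}\hat X_{\lambda\mu}A_{\mu i}\ge 0$, agrees with the paper's explicit formula $D^+_{ii}=2\,x_{\lambda\mu}A_{\lambda i}A_{\mu i}$ stated in the remark. Only your parenthetical aside about the ``lost'' zeon terms is off (the terms killed by $\e_i^2=0$ are those sending distinct $\lambda\neq\mu$ to a common image $i$, not terms involving $\hat X_{\lambda\lambda}$), but your main argument does not rely on it.
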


\begin{proof} The components of $X\spow{A}{2}$ are
\begin{align*}
(X\spow{A}{2})_{\,ij}  &= \theta_{ij}\theta_{\lambda\mu}
(x_{\lambda\mu}A_{\lambda i}A_{\mu j}+x_{\lambda\mu}A_{\mu i}A_{\lambda j}) \\
 &= \theta_{ij}(\symm{A}{\hat X})_{\,ij}
\end{align*}
with the {\sl theta symbol\/} for pairs of single indices
$$ \theta_{ij}= 
\begin{cases}
1, & \text{if } i<j\cr  0, & \text{otherwise}
\end{cases}
$$
Note that the diagonal terms of $\hat X$ vanish anyway.
And $\symm{A}{\hat X}$ will be symmetric if $\hat X$ is.
Since the left-hand side has zero diagonal entries, we can remove the theta symbol and compensate by subtracting off
the diagonal, call it $D^+$. Taking traces yields \#1. And \#2 follows similarly.
\end{proof}

\begin{remark}
Observe that $D^+$ and $D^-$ may be explicitly given by
\begin{align*}
                    D^+_{\,ii}  &= 2\,x_{\lambda\mu}A_{\lambda i}A_{\mu i} \\
                    D^-_{\,ii}  &= 2\,x_{\lambda\mu}A_{i\lambda }A_{i\mu} 
\end{align*}
where for $D^+$ the $A$ elements are taken within a given column, while for $D^-$, the $A$ elements are in a given row.
\end{remark}

Connections between zeons and nonnegativity that we develop here give some indication that their natural
place is indeed in probability theory and quantum probability theory. \bigskip

Parallelling Proposition \ref{tpropXX}, we see the role of nonnegativity appearing.
\begin{proposition}\label{propXX} Let $X$ and $A$ be nonnegative. Then 
\begin{align*}
  \hat X = \symm{A}{\hat X}  &\Rightarrow  X\spow{A}{2} = X\\
  \hat X = \sym{A}{\hat X}  &\Rightarrow  \spow{A}{2}\dg  X. = \dg  X.\\
\end{align*}
\end{proposition}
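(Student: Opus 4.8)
The plan is to mirror the proof of Proposition \ref{tpropXX}, but to pay attention to the diagonal-correction terms $D^{\pm}$ supplied by Proposition \ref{propBR}, and to see that nonnegativity is exactly what forces those corrections to vanish. So first I would assume $\hat X=\symm{A}{\hat X}$ and apply Basic Relation \#1: this gives
$$\Mat(X\spow{A}{2})=\symm{A}{\hat X}-D^+=\hat X-D^+.$$
Thus $\Mat(X\spow{A}{2})$ differs from $\hat X=\Mat(X)$ only by the diagonal matrix $D^+$, and since $\Mat$ is a linear embedding, $X\spow{A}{2}=X$ will follow the moment I show $D^+=0$.

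Next I would extract the trace condition. By \#1, $\tr D^+=\tr\symm{A}{\hat X}=\tr\hat X=0$, since $\hat X$ has zero diagonal by construction of the map $\Mat$ on $\spow{\V}{2}$. Now invoke \#3: because $A$ and $X$ are nonnegative, $D^+$ has nonnegative entries (explicitly $D^+_{ii}=2\,x_{\lambda\mu}A_{\lambda i}A_{\mu i}\ge 0$), and a nonnegative diagonal matrix with vanishing trace is the zero matrix. Hence $D^+=0$, so $\Mat(X\spow{A}{2})=\hat X=\Mat(X)$, and injectivity of $\Mat$ gives $X\spow{A}{2}=X$. The second implication is identical with \#2 in place of \#1, using $D^-$ and $\spow{A}{2}\dg X.$; here one transposes/reads off $\Mat(\spow{A}{2}\dg X.)=\sym{A}{\hat X}-D^-=\hat X-D^-$, concludes $D^-=0$ the same way, and recovers $\spow{A}{2}\dg X.=\dg X.$.

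There is essentially no obstacle here — the whole content is already packaged in Proposition \ref{propBR}, and the only thing to be careful about is the logical direction: unlike the tensor case (Proposition \ref{tpropXX}), this is an implication, not an equivalence, precisely because the map $\Mat\colon\spow{\V}{2}\to\mathrm{Sym}(\V)$ is only an embedding and the matrix fixed-point equation $\hat X=\symm{A}{\hat X}$ can have symmetric solutions with nonzero diagonal that do not come from any zeon vector. So the one "subtle" point worth a sentence in the writeup is that nonnegativity is doing real work: it is what lets us pass from "trace of the diagonal correction is zero" to "the diagonal correction is zero", and without it the conclusion can fail.
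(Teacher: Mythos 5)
Your proof is correct and follows essentially the same route as the paper's: apply Basic Relation \#1 (resp.\ \#2), note that $\tr D^{\pm}=\tr\symm{A}{\hat X}$ (resp.\ $\tr\sym{A}{\hat X}$) vanishes because $\hat X$ has zero diagonal, and use nonnegativity via \#3 to conclude $D^{\pm}=0$. Your closing remark on why only an implication holds is tangential commentary and not needed, but it does not affect the argument.
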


\begin{proof} We have
$$D^+= \symm{A}{\hat X}-\Mat(X\spow{A}{2})$$
If $\hat X = \symm{A}{\hat X}$, then since $\hat X$ has vanishing trace, $\tr \symm{A}{\hat X}=0$. So
$\tr D^+=0$, hence $D^+=0$, and $\hat X = \symm{A}{\hat X}=\Mat(X\spow{A}{2})$. 
The second implication follows similarly. 
\end{proof}

\subsection{Trace Identities}

We continue with trace identities for zeons. The proofs follow from the above relations much as they do in the tensor case.

\begin{proposition}\label{propTR} We have \bigskip

1. $\displaystyle X\spow{A}{2}\dg  u. = \half \,\tr (\hat X A(J-I)A^*)$ .\bigskip

2. $\displaystyle u\spow{A}{2}\dg  X. = \half \,\tr (\hat X A^*(J-I)A)$ .\bigskip

3. If $A$ is stochastic, then \ $\displaystyle X\spow{A}{2}\dg  u. = \half \,\tr (\hat X (J-AA^*))$.
\end{proposition}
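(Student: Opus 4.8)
The plan is to mimic the tensor-case derivation in Proposition \ref{tpropTR}, but now carrying along the diagonal correction terms $D^{\pm}$ forced by the zeon relations. The starting point is Basic Relation \#1 from Proposition \ref{propBR}, namely $\Mat(X\spow{A}{2}) = \symm{A}{\hat X}-D^+$, combined with the zeon trace identity \eqref{eqtrace1}, which reads $X\dg u. = \tfrac12\tr(\hat X J)$ and, more to the point here, tells us how to extract $Y\dg u.$ from a matrix $Y$: for any $Y$ in the image of $\Mat$ we have $(Y_{\rm vec})\dg u. = \tfrac12\tr(Y J)$. Applying this to $Y=\Mat(X\spow{A}{2})$ gives
\begin{align*}
X\spow{A}{2}\dg u. &= \tfrac12\tr\bigl((\symm{A}{\hat X}-D^+)J\bigr)\\
&= \tfrac12\tr(\symm{A}{\hat X}J) - \tfrac12\tr(D^+ J).
\end{align*}

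Next I would rearrange each piece. For the first term, cyclically permuting inside the trace turns $\tr(A\hat X A^* J)$ into $\tr(\hat X A^* J A)$ — wait, for \#1 we want the $AJA^*$ arrangement, so $\tr(A^*\hat X A J)=\tr(\hat X A J A^*)$; this is exactly the analogue of the tensor computation. For the second term, I use the remark following Proposition \ref{propBR} that $D^+_{ii}=2x_{\lambda\mu}A_{\lambda i}A_{\mu i}$, equivalently $D^+ = \diag{v}$ where $v_i = (A^*\hat X A)_{ii}$ (up to the factor coming from $\hat X$ being the full symmetric fill-in, so that $2x_{\lambda\mu}=\hat X_{\lambda\mu}$ summed over all ordered pairs). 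Then, invoking the observation in the excerpt that $\tr D = \tr DJ$ for diagonal $D$, I get $\tr(D^+ J) = \tr D^+ = \tr(A^*\hat X A) = \tr(\hat X A A^*)$. Combining, $X\spow{A}{2}\dg u. = \tfrac12\tr(\hat X A J A^*) - \tfrac12\tr(\hat X A A^*) = \tfrac12\tr(\hat X A(J-I)A^*)$, which is \#1. Part \#2 is entirely parallel, starting from Basic Relation \#2 and $D^-_{ii}=2x_{\lambda\mu}A_{i\lambda}A_{i\mu}$, yielding $\tfrac12\tr(\hat X A^*(J-I)A)$. Part \#3 then follows by specializing: if $A$ is stochastic then $AJ=J$ and $JA^*=J$, so $AJA^*=J$, and \#1 collapses to $\tfrac12\tr(\hat X(J-AA^*))$.

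The one genuine subtlety — and the step I expect to be the main obstacle — is bookkeeping the factor of $2$ and the precise meaning of $D^+_{ii}=2x_{\lambda\mu}A_{\lambda i}A_{\mu i}$ against the convention that $\hat X$ is the symmetric matrix with $\hat X_{ij}=x_{ij}$ for $i<j$ and $\hat X_{ji}=\hat X_{ij}$. I must be careful that when I write $\tr(A^*\hat X A) = \sum_i \sum_{\lambda,\mu} A_{\lambda i}\hat X_{\lambda\mu}A_{\mu i}$, the sum over all ordered pairs $(\lambda,\mu)$ of $\hat X_{\lambda\mu}=\hat X_{\mu\lambda}$ reproduces exactly $\sum_i 2x_{\lambda\mu}A_{\lambda i}A_{\mu i}=\sum_i D^+_{ii}=\tr D^+$, so that the identification $\tr(D^+J)=\tr D^+ = \tr(\hat X AA^*)$ is correct with no stray factor. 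Once that is pinned down, everything else is cyclic-trace manipulation identical in spirit to Proposition \ref{tpropTR}, and the stochastic specialization in \#3 is immediate from $AJ=J=JA^*$.
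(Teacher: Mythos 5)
Your proposal is correct and is exactly the argument the paper intends: it states that the proofs "follow from the above relations much as they do in the tensor case," and you fill in precisely those steps — apply equation \eqref{eqtrace1} to $X\spow{A}{2}$, invoke Basic Relation \#1 with the diagonal correction $D^+$ (the diagonal of $A^*\hat X A$), use $\tr D^+ = \tr(D^+J)$ and cyclicity to get $\half\tr(\hat X AJA^*)-\half\tr(\hat X AA^*)$, and similarly for \#2, with \#3 from $AJA^*=J$ for stochastic $A$. Your factor-of-two bookkeeping for $D^+_{ii}=2x_{\lambda\mu}A_{\lambda i}A_{\mu i}$ versus the symmetric fill-in of $\hat X$ is handled correctly, so there is no gap.
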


Using  equation (\ref{eqtrace1}) directly for $X$, we have
\begin{align}
X(I-\spow{A}{2})\dg  u.  &=  \half \,\tr (\hat X (J-AJA^*+AA^*)) \label{eqbp1}\\
u(I-\spow{A}{2})\dg  X.  &=  \half \,\tr (\hat X (J-A^*JA+A^*A))\label{eqbp2}
\end{align}

For zeons a new feature appears. 
For stochastic $A$, equation (\ref{eqbp1}) yields
\begin{lemma} [\rm ``integration-by-parts for zeons"]
\begin{equation} \label{eqparts}
X(I-\spow{A}{2})\dg  u. = \half \,\tr A^*\hat XA
\end{equation}
\end{lemma}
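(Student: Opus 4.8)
The plan is to start from the already-established identity \eqref{eqbp1}, namely
\[
X(I-\spow{A}{2})\dg  u. = \half\,\tr\bigl(\hat X(J-AJA^*+AA^*)\bigr),
\]
and simplify the right-hand side using the hypothesis that $A$ is stochastic. The key observation is that a stochastic matrix satisfies $AJ=J$, hence $AJA^*=JA^*$, and moreover $JA^*=J$ as well since the columns of $A^*$ are the rows of $A$, each summing to $1$; so $AJA^*=J$. Substituting this into the bracket collapses $J-AJA^*+AA^*$ to simply $AA^*$. Therefore
\[
X(I-\spow{A}{2})\dg  u. = \half\,\tr(\hat X AA^*).
\]

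To reach the stated form $\half\,\tr A^*\hat X A$, I would then use cyclicity of the trace together with symmetry of $\hat X$: write $\tr(\hat X AA^*) = \tr(A^*\hat X A)$ by moving the leading $A^*$ around the cycle, i.e. $\tr(\hat X AA^*)=\tr(A^*\hat X A A^* A^{-*}\cdots)$ — more cleanly, cyclicity gives directly $\tr(\hat X AA^*)=\tr(A^*\hat X A)$ since one simply rotates the rightmost $A^*$ to the front. (No symmetry of $\hat X$ is even needed for this step; it is pure cyclicity $\tr(PQ)=\tr(QP)$ with $P=\hat X A$ and $Q=A^*$.) This yields exactly \eqref{eqparts}.

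The only thing requiring a moment of care — and the one place I would double-check — is the claim $AJA^*=J$, which needs both $AJ=J$ (rows of $A$ sum to $1$) and $JA^*=J$ (columns of $A^*$, i.e. rows of $A$ again, sum to $1$); both follow from $A$ being (row-)stochastic, and in fact Proposition \ref{propTR}\,\#3 already records $AJA^* = AA^* $-adjusted statements in the same spirit, so this is consistent with what the paper has used. Thus the proof is genuinely just: specialize \eqref{eqbp1} to stochastic $A$, cancel $J-AJA^*=0$, and apply cyclicity of the trace. I do not anticipate any real obstacle; the "integration-by-parts" name simply reflects that the discrete operator $I-\spow{A}{2}$ has been moved off $X$ and onto a quadratic expression in $A$ acting on $\hat X$, leaving a boundary-type term $\tr A^*\hat X A$ that, unlike in the tensor case \eqref{teqtrace1} where the analogous quantity vanished, is now generally nonzero.
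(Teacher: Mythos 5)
Your proof is correct and is exactly the paper's route: the lemma is stated immediately after \eqref{eqbp1} precisely as its specialization to stochastic $A$, where $AJ=J$ and $JA^{*}=J$ kill the $J-AJA^{*}$ terms, leaving $\half\,\tr(\hat X AA^{*})=\half\,\tr(A^{*}\hat X A)$ by cyclicity of the trace. The garbled intermediate expression in your second paragraph is harmless since you immediately replace it with the clean cyclicity argument $\tr(PQ)=\tr(QP)$ with $P=\hat X A$, $Q=A^{*}$.
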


\subsection{Zeon hierarchy. $M$ and $N$ operators}

Consider the semigroup generated by the matrices $\spow{C_i}{\l}$, corresponding to the action on $\l$-sets of vertices.
The map $w\to \spow{w}{\l}$ is a homomorphism of matrix semigroups. 
We now have our main working tool in this context.
\begin{proposition} \label{prop:sup}
For $1\le \l\le n$, define
$$A_\l=\frac{1}{d} \spow{C_1}{\l}+\cdots +\frac{1}{d} \spow{C_d}{\l}\ .$$
Then the Ces\`aro limit of the powers $A_\l^{\,m}$ exists and equals 
$$\Omega_\l=\avg \spow{K}{\l}.$$
the average taken over the kernel $\K$ of the semigroup $\SG$ generated by $\{C_1,\ldots,C_d\}$.
\end{proposition}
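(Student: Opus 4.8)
The plan is to mirror the argument already used for the tensor-hierarchy statement in Proposition~\ref{prop:tens}, adapting it to the zeon setting and being careful about the one place where the two systems genuinely differ. The key observation is that the map $w\to \spow{w}{\l}$ is a homomorphism of matrix semigroups \emph{when restricted to elements corresponding to functions}, which is exactly the content of \eqref{zhom}: since each generator $C_i=\Mt(f_i)$ and every word $w\in\SG$ corresponds to a function on $V$, the matrices $\spow{w}{\l}$ do compose correctly, $\spow{(w w')}{\l}=\spow{w}{\l}\spow{w'}{\l}$. In particular, setting $\phi(w)=\spow{w}{\l}$ gives a genuine homomorphism of finite matrix semigroups $\phi\colon\SG\to\SG'$ where $\SG'$ is generated by $\{\spow{C_i}{\l}\}$, and it preserves nonnegativity. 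Thus the statement is very nearly a direct instance of Proposition~\ref{prop:hom}.

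\textbf{The steps, in order.} First I would verify the homomorphism claim: each $C_i$ has exactly one nonzero entry per row (it is a binary stochastic matrix representing a function), hence so does every product, so the sufficient condition noted after \eqref{zhom} applies and $\phi$ is multiplicative on all of $\SG$. Second, I would write $A_\l = \sum_{w\in\SG}\mu^{(1)}(w)\,\phi(w) = A_\phi$ in the notation of Proposition~\ref{prop:hom}, with $\mu^{(1)}$ the measure assigning mass $1/d$ to each generator. Third, by the same one-line induction as in the proof of Proposition~\ref{prop:hom} (using multiplicativity of $\phi$ and the definition of convolution powers), one gets $A_\l^{\,m} = \sum_{w\in\SG}\mu^{(m)}(w)\,\spow{w}{\l}$. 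Fourth, averaging $M^{-1}\sum_{m=1}^M$ and letting $M\to\infty$, the Ces\`aro limit of $\mu^{(m)}$ exists and is the measure $\lambda$ concentrated on $\K$ (Theorem~\cite{HM}, Th.~2.7, already invoked in the excerpt), so the Ces\`aro limit of $A_\l^{\,m}$ exists and equals $\sum_{w\in\K}\lambda(w)\,\spow{w}{\l}=\avg\spow{K}{\l}.$, which is $\Omega_\l$ by definition.

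\textbf{The main obstacle}, and the only real difference from the tensor case, is precisely that $\phi$ is \emph{not} multiplicative on arbitrary nonnegative matrices — so one cannot blithely quote the tensor-hierarchy proof verbatim. The care needed is to confirm that multiplicativity holds on the specific semigroup $\SG$ generated by the coloring matrices, which it does because every element of $\SG$ is a function matrix; this is what licenses the identity $A_\l^{\,m}=\sum_w\mu^{(m)}(w)\spow{w}{\l}$. Note it is essential that the averaging measure $\mu^{(m)}$ is supported on $\SG$ (words in the $C_i$), so that only function matrices ever appear and no spurious cross-terms from the failure of \eqref{zhom} intrude; the matrix $A_\l$ itself is a convex combination of function matrices and need not be a function matrix, but that is harmless since we only ever expand its \emph{powers} back into the semigroup. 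Once this point is made explicit, the convergence and the identification of the limit are routine, exactly as in Propositions~\ref{prop:hom} and~\ref{prop:tens}.
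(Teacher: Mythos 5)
Your proposal is correct and follows essentially the same route the paper intends: the paper's (implicit) proof consists of the remark preceding the proposition that $w\to\spow{w}{\l}$ is a homomorphism on the function-matrix semigroup $\SG$ (the content of \eqref{zhom} and the discussion of when multiplicativity can fail), combined with Proposition~\ref{prop:hom} exactly as you do. Your explicit emphasis that multiplicativity need only hold on $\SG$, not on arbitrary nonnegative matrices, is precisely the point the paper makes just before stating the result.
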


With $r$ the rank of the kernel, we see that $\Omega_\l$ vanishes for $\l>r$.  \bigskip

Here we have $A_\l$ in general \textit{substochastic}, i.e., it may have zero rows or rows that do not sum to one.
As noted in the Appendix \ref{app:b}, the Abel limits exist for the $A_\l$. They agree with the Ces\`aro limits.

\subsection{$M$ and $N$ operators via zeons}

Here we will show how the operators $M$ and $N$ appear via zeons. The main tool are the trace identities.
Note that we automatically get $N_0$ while the diagonal of $M$ is cancelled out in the inner product with $\hat X$.

\begin{proposition}\label{propMN} For any $X$,
$$X\Omega_2\dg  u.=X\dg  u_2.=\avg X,u_2.=\half\,\tr \hat X\hat u_2 = \half\,\tr \hat XN_0$$
where $N_0=\avg J-KK^*.$, taken over the kernel.\bigskip

And
$$u\Omega_2\dg  X.=\pi_2\dg  X.=\avg \pi_2,X.=\half\,\tr \hat \pi_2\hat X = \half\,\tr \hat XM$$
where $M=\avg K^*JK.$, taken over the kernel.
\begin{proof} We have $\Omega_2=\avg \spow{K}{2}.$. Using Proposition \ref{propTR} for stochastic matrices, 
$$X\Omega_2\dg  u.=\avg X\spow{K}{2}{\dg  u.}.=\half\, \tr \avg \hat X(J-KK^*).= \half\,\tr \hat XN_0$$
Similarly,
$$u\Omega_2\dg  X.=\avg u\spow{K}{2}{\dg  X.}.=\half\, \tr \avg \hat XK^*(J-I)K).=
\half\, \tr \avg \hat XK^*JK. $$ 
where, since $K^*K$ is diagonal, $\tr \hat X K^*K=0$ as the diagonal of $\hat X$ vanishes.\hfill\qedhere
\end{proof}
\end{proposition}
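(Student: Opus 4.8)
The plan is to evaluate the scalar $X\Omega_2\dg u.$ (respectively $u\Omega_2\dg X.$) in two ways and match the results. Since $\Omega_2=\avg\spow{K}{2}.$ by Proposition \ref{prop:sup}, linearity of the average lets me work with a single kernel element $k\in\K$ and then average. The key input is the zeon trace identity of Proposition \ref{propTR}\,\#3: for stochastic $A$ one has $X\spow{A}{2}\dg u.=\half\,\tr(\hat X(J-AA^*))$. Applied to each $k\in\K$ — which is stochastic, indeed binary stochastic, since it represents a function — this gives $X\spow{k}{2}\dg u.=\half\,\tr(\hat X(J-kk^*))$. Averaging over $\K$ and pulling the average inside the trace yields $X\Omega_2\dg u.=\half\,\tr(\hat X\,\avg J-KK^*.)=\half\,\tr\hat X N_0$, which is the first claim once $N_0$ is identified with $\avg J-KK^*.$ — consistent with the earlier definition $N_0=J-N$ and $N=\avg KK^*.$. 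The chain of equalities $X\Omega_2\dg u.=X\dg u_2.=\avg X,u_2.=\half\,\tr\hat X\hat u_2$ is then just unwinding the definition $u_{\otimes 2}^\dagger=\Omega_{\otimes 2}I_{\rm vec}$ adapted to the zeon setting, together with the inner product formula $\avg X,X'.=\half\tr\hat X\hat X'$ and the pairing identity \eqref{eqtrace1}.

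For the second statement I use Proposition \ref{propTR}\,\#2: $u\spow{A}{2}\dg X.=\half\,\tr(\hat X A^*(J-I)A)$. With $A=k$ stochastic this reads $u\spow{k}{2}\dg X.=\half\,\tr(\hat X(k^*Jk-k^*k))$. Now comes the one nontrivial observation, already flagged in the statement: $k^*k$ is a diagonal matrix (its $(i,j)$ entry counts preimages common to $i$ and $j$ under the function $k$, which is nonzero only when $i=j$), while $\hat X$ has vanishing diagonal by construction of the zeon $\Mat$ map; hence $\tr(\hat X k^*k)=0$. Therefore $u\spow{k}{2}\dg X.=\half\,\tr(\hat X k^*Jk)$, and averaging gives $u\Omega_2\dg X.=\half\,\tr(\hat X\,\avg K^*JK.)=\half\,\tr\hat X M$, with $M=\avg K^*JK.$ as defined. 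Again the identifications $u\Omega_2\dg X.=\pi_2\dg X.=\avg\pi_2,X.=\half\,\tr\hat\pi_2\hat X$ follow from the definition $\pi_{\otimes 2}=u\Omega_{\otimes 2}$ and the inner product formula.

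The main obstacle — really the only place where anything must be checked rather than merely cited — is the vanishing of $\tr(\hat X k^*k)$: one needs that $k^*k$ is diagonal for every $k\in\K$, not merely for idempotents. This is true because each $k$ is the matrix of a function on $V$, so $k$ has exactly one nonzero entry (a $1$) in each row, and then $(k^*k)_{ij}=\sum_\lambda k_{\lambda i}k_{\lambda j}$ forces $i=j$ for a nonzero contribution. I would state this as a one-line lemma or inline remark. Everything else is bookkeeping: matching the zeon trace normalization (the ubiquitous factor $\half$) against the inner product and against the definitions of $u_{\otimes 2}^\dagger$, $\pi_{\otimes 2}$, $M$, and $N_0$, and noting that averaging commutes with the trace and with $\Mat$. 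A minor point worth a sentence is that Proposition \ref{propTR}, as stated, requires stochasticity, which holds for each $k\in\K$ even though $A_\l$ itself is only substochastic — so one must average \emph{after} applying the identity to individual kernel elements, not apply it to $A_2$ directly.
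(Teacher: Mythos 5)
Your proposal is correct and follows essentially the same route as the paper: apply the trace identities of Proposition \ref{propTR} to each (binary stochastic) kernel element $k$, average, and kill the $\tr(\hat X K^*K)$ term by observing that $K^*K$ is diagonal while $\hat X$ has zero diagonal. Your added remarks --- that $k^*k$ is diagonal because $k$ represents a function, and that the identity must be applied to individual kernel elements rather than to the merely substochastic $A_2$ --- are sound and only make explicit what the paper leaves implicit.
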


From these relations we see that 
$$u_2^\dagger=\Omega_2\ud \quad \Rightarrow \quad N_0=\avg J-KK^*.=\Mat(u_2)$$
and
$$\pi_2=u\Omega_2 \quad \Rightarrow \quad M_0=\avg K^*JK.-\tau\omega=\Mat(\pi_2)$$

Cf., Proposition \ref{prop:level2}. Here $M$ and $N$ only require applying $\Omega_2$ to $u$.

\subsection{The spaces $\M_0$ and $\N_0$}

Now we have $$\displaystyle A_2=\half\,(\spow{R}{2}+\spow{B}{2})=\spow{A}{2}+\spow{\Delta}{2} \ .$$

Define the mappings $F,G\colon {\rm Sym}(\V)\to{\rm Sym}(\V)$ by
$$ F(Y)=\sym{A}{Y}+\sym{\Delta}{Y} \qquad\text{ and }\qquad G(Y)=\symm{A}{Y}+\symm{\Delta}{Y}\ .$$
consistent with
$$ F(Y)=\half\,(\sym{R}{Y}+\sym{B}{Y}) \qquad\text{ and }\qquad G(Y)=\half\,(\symm{R}{Y}+\symm{B}{Y})\ .\vstrut$$
First, some observations
\begin{proposition}  
For any vector $v$, \rm $G(\diag{v}) = \diag{vA}$ .
\end{proposition}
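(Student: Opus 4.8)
The plan is to compute $G(\diag{v})$ directly from the definition $G(Y)=\symm{A}{Y}+\symm{\Delta}{Y}$, translating back through the decomposition $\spow{A}{2}=\half(\spow{R}{2}+\spow{B}{2})$ so that $G(Y)=\half\bigl(\symm{R}{Y}+\symm{B}{Y}\bigr)$, and then use that $R$ and $B$ are binary stochastic matrices representing functions on $V$. First I would recall the key entrywise identity already exploited in the proof of Proposition~\ref{prop:diagm}: for a function matrix $R$, $(\symm{R}{Y})_{ij}=R_{\lambda i}Y_{\lambda\mu}R_{\mu j}$, and since $R$ has exactly one $1$ per row, $R_{\lambda i}=1$ iff $\lambda$ is the unique preimage-free statement — more precisely $R_{\lambda i}=[\,\lambda R=i\,]$ reading $R$ as acting on indices.

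Next I would substitute $Y=\diag{v}$, so $Y_{\lambda\mu}=v_\lambda\delta_{\lambda\mu}$, and the double sum collapses: $(\symm{R}{\diag{v}})_{ij}=\sum_\lambda R_{\lambda i}v_\lambda R_{\lambda j}$. The diagonal-ness of the result should fall out: $R_{\lambda i}R_{\lambda j}$ is nonzero only when $\lambda R=i$ and $\lambda R=j$, forcing $i=j$; hence $\symm{R}{\diag{v}}$ is diagonal with $(\symm{R}{\diag{v}})_{ii}=\sum_{\lambda:\lambda R=i}v_\lambda=(vR)_i$, i.e. $\symm{R}{\diag{v}}=\diag{vR}$. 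The same holds for $B$, giving $G(\diag{v})=\half\bigl(\diag{vR}+\diag{vB}\bigr)=\diag{\tfrac12 v(R+B)}=\diag{vA}$ since $A=\tfrac12(R+B)$.

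The only mild subtlety — and the step I would be most careful about — is index-direction bookkeeping: one must confirm that $(\symm{R}{Y})_{ij}=R_{\lambda i}Y_{\lambda\mu}R_{\mu j}$ is the correct reading of $\Mat(\spow{R}{2}\dg X.)$ (as opposed to $\symm{R}{\cdot}$ with $R$ and $R^*$ swapped), matching the convention fixed in Basic Relation~2 and used in Proposition~\ref{prop:diagm}, so that the surviving sum is genuinely $vR$ and not $vR^*$. Once that orientation is pinned down against the already-established conventions, everything else is the routine collapse of the sum on a diagonal argument, and no appeal to nonnegativity, stochasticity beyond $A=\tfrac12(R+B)$, or the kernel structure is needed.
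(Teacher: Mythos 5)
Correct, and essentially the paper's own proof: you expand $(\symm{R}{\diag{v}})_{ij}=R_{\lambda i}v_{\lambda}R_{\lambda j}$, use that $R$ (and likewise $B$) is a function matrix to force $i=j$, read off the diagonal entry as $(vR)_i$, and average to get $\diag{vA}$, exactly as the paper does. The only nit concerns your convention-checking aside: the reading $(\symm{R}{Y})_{ij}=R_{\lambda i}Y_{\lambda\mu}R_{\mu j}$ is the one fixed by Basic Relation 1, i.e.\ for $\Mat(X\spow{R}{2})$, and used in Proposition \ref{prop:diagm} (Basic Relation 2 governs $\sym{R}{\cdot}$), but the entrywise formula you actually compute with is the right one, so the argument stands.
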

\begin{proof} Let $Y=G(\diag{v})$. Then
$2\,Y_{ij}=R_{\lambda i}v_{\lambda}R_{\lambda j}+B_{\lambda i}v_{\lambda}B_{\lambda j}$.
A term like $R_{li}$ means that $R$ maps $l$ to $i$. Since $R$ is a function, $l$ can only map to both $i$ and $j$ if $i=j$.
So $Y$ is diagonal. If $i=j$, then we get, using $R_{ij}^2=R_{ij}$, $B_{ij}^2=B_{ij}$,
$$2\,Y_{ii}=R_{\lambda i}v_{\lambda}+B_{\lambda i}v_{\lambda}=2\,v_\lambda A_{\lambda i}$$
as required.
\end{proof}

For example, taking $v=u$, we see that $$G(I)=I \ \text{if and only if}\  A \ \text{is doubly stochastic.}$$

First, observe
\begin{proposition} 
$F(J)=J$ and $G(\omega)=\omega$. That is, $J\in\N_+$ and $\omega\in \M_+$.
\end{proposition}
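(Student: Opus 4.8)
The plan is to verify the two assertions $F(J)=J$ and $G(\omega)=\omega$ directly from the definitions, since both reduce to elementary facts about stochastic matrices together with the vanishing-trace bookkeeping built into the zeon $\Mat$ map. For $F(J)=J$, I would start from $F(Y)=\tfrac12(\sym{R}{Y}+\sym{B}{Y})$ and recall from Proposition \ref{propBR}, \#2, that $\Mat(\spow{A}{2}\dg X.)=\sym{A}{\hat X}-D^-$ with $D^-$ nonnegative when $A,X$ are nonnegative; applied at level $\l=2$ to $R$ and $B$ this says precisely that $F$ is the matrix form of applying $A_2=\tfrac12(\spow{R}{2}+\spow{B}{2})$ on the right. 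Now $J$ corresponds (under $\Mat$) to the vector $u$ of all ones, and by the third relation of Proposition \ref{prop:level2}, $\Mat(\Omega_{\otimes 2}J_{\rm vec})=J$; more directly, since each of $R,B$ is a binary stochastic matrix one has $\sym{R}{J}=RJR^*=JR^*=J$ and likewise $\sym{B}{J}=J$, so $F(J)=\tfrac12(J+J)=J$. The only subtlety is confirming that no off-diagonal mass is lost in passing through the zeon $\Mat$ embedding — but since $R,B$ are functions, $RJR^*$ already has the correct zero diagonal forced only where a column collision occurs, and the computation $J(I-A_2)\dg u.=0$-type identities of Proposition \ref{propTR}\#3 guarantee consistency; I would spell this out as: $\hat J$ has vanishing trace after the diagonal correction is absorbed into the definition, and $F$ preserves the relevant space.

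For $G(\omega)=\omega$, I would invoke the preceding Proposition in the excerpt: $G(\diag{v})=\diag{vA}$ for any vector $v$. Taking $v=\pi$, the invariant distribution of $A$, gives $G(\diag{\pi})=\diag{\pi A}=\diag{\pi}=\omega$, since $\pi A=\pi$. This is immediate and is really the whole content of the second half.

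Finally I would package the membership statements: $F(J)=J$ means $\hat J=\sym{A}{\hat J}+\sym{\Delta}{\hat J}$ in the notation of the section, i.e. $J$ is a fixed point of the map defining $\N$, and since $J$ is (entrywise) nonnegative, $J\in\N_+$; similarly $G(\omega)=\omega$ with $\omega$ nonnegative gives $\omega\in\M_+$. I do not expect any real obstacle here — the statement is essentially a sanity check that the two canonical objects $J$ and $\omega$ sit inside the solution spaces $\N_+$ and $\M_+$ respectively — so the ``hard part,'' such as it is, is only the careful handling of the diagonal-correction terms $D^{\pm}$ in the zeon setting, which is dispatched by noting that $\hat J$ and $\hat\omega$... wait, $\omega$ is diagonal so $\hat\omega=0$ in the zeon embedding; here one must instead read $G(\omega)=\omega$ as a statement about matrices in ${\rm Sym}(\V)$ before embedding, which the displayed Proposition already establishes. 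So the proof is two lines: $F(J)=\tfrac12(RJR^*+BJB^*)=\tfrac12(J+J)=J$ and $G(\omega)=\diag{\pi A}=\diag\pi=\omega$.
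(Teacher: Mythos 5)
Your proof is correct and essentially matches the paper's: the paper computes $F(J)=\sym{A}{J}+\sym{\Delta}{J}=J+0$ using $AJ=J=JA^*$ and $\Delta J=0$, which is the same computation as your $\tfrac12(RJR^*+BJB^*)=J$, and for the second half both you and the paper simply apply $G(\diag{v})=\diag{vA}$ with $v=\pi$. The lengthy detour about the zeon $\Mat$ embedding and the diagonal corrections $D^{\pm}$ is unnecessary, since the proposition is a statement about the maps $F,G$ on $\mathrm{Sym}(\V)$ directly; your closing two-line version is all that is needed.
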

\begin{proof}
We have $F(J)=\sym{A}{J}+\sym{\Delta}{J}$. With $AJ=J=JA^*$ and $\Delta J=0$, this reduces to $J$.
Second, from the previous Proposition, we have $G(\omega)=G(\diag{\pi}))=\diag{\pi A}=\diag{\pi}=\omega$.
\end{proof}

Now recall Propositions \ref{prop:diagn} and \ref{prop:diagm}. We see that an element of $\M_+$ differs by a multiple of $\omega$
from an element of $\M_0$. For an element $N\in\N_+$ with entries bounded by 1, with diagonal entries all equal to one, we see that
$J-N$ gives a corresponding element in $\N_0$. We have seen the special operators $M$, $N$, $M_0$ and $N_0$ as interesting examples. \bigskip

Note that equation (\ref{eqparts}) applied to $R$ and $B$ and then averaged yields
\begin{equation} \label{eqPTS}
 X(I-A_2)\dg  u. =\half\,\tr G(\hat X)
\end{equation}

The next theorem shows the general relation between
fixed points of $A_2$ and fixed points of $F$ and $G$. Note that we are not restricting {\it a priori\/} to nonnegative solutions except
in case 2 of the theorem. This allows us to associate trace-zero $M$ and $N$ operators with averages over zeon powers of the kernel elements.

\begin{theorem} \hfill\bigskip

1. $F(\hat X)=\hat X$ if and only if $A_2\dg  X.=\dg  X.$.\medskip

2. For nonnegative solutions $X$, $G(\hat X)=\hat X$ if and only if $XA_2=X$.
\begin{proof}
First, from Proposition \ref{propBR}, applying the Basic Relations for $R$ and $B$ and averaging, we get
\begin{align}
\Mat(XA_2)  &= G(\hat X)-\half\,(D_R^++D_B^+) \label{eqG}\\
\Mat(A_2\dg  X.)  &=  F(\hat X)-\half\,(D_R^-+D_B^-)
\end{align}
where the subscripts on the $D$'s indicate the diagonals corresponding to $R$ and $B$ respectively.
Recalling the remark following Proposition \ref{propBR}, note that for $D_R^-$, say, we have terms like
$R_{i\lambda}x_{\lambda\mu}R_{i\mu}$. Since $R$ is a function, we must have $\lambda=\mu$, but then $x_{\lambda\mu}=0$.
In other words, $D_R^-$ and $D_B^-$ vanish so that, in fact,  $\Mat(A_2\dg  X.)  =  F(\hat X)$. And \#1 follows immediately.\bigskip

For \#2,  if $G(\hat X)=\hat X$, then the trace of the diagonal terms in equation (\ref{eqG}) vanish, so under the assumption of
nonnegativity, they must vanish. And replacing $G(\hat X)$ by $\hat X$, we have $XA_2=X$.\bigskip

Assume $XA_2=X$. Taking traces in equation \eqref{eqG}, the trace of the left-hand side vanishes and the trace of the diagonal
terms equals $\tr G(\hat X)$.  Now, use the integration-by-parts formula for $A_2$, equation \eqref{eqPTS}. The left-hand
side vanishes, hence $\tr G(\hat X)=0$, implying the vanishing of the terms $D_R^+$ and $D_B^+$ as well. Then equation
\eqref{eqG} reads $\Mat(X)=G(\hat X)$ as required. \hfill\qedhere
\end{proof}
\end{theorem}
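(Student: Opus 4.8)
The plan is to prove the two equivalences by relating the zeon-level-2 operator $A_2$ to the maps $F$ and $G$ via the Basic Relations of Proposition \ref{propBR}, exactly as the operators $\symm{A}{\cdot}$ and $\sym{A}{\cdot}$ are related to $X\spow{A}{2}$ and $\spow{A}{2}\dg X.$. First I would apply Proposition \ref{propBR}, parts 1 and 2, to the matrices $R$ and $B$ separately and average with weight $\tfrac12$. Since $A_2=\tfrac12(\spow{R}{2}+\spow{B}{2})$, this yields the two identities
\begin{align*}
\Mat(XA_2) &= G(\hat X)-\tfrac12(D_R^++D_B^+),\\
\Mat(A_2\dg X.) &= F(\hat X)-\tfrac12(D_R^-+D_B^-),
\end{align*}
where the $D$'s are the diagonal correction matrices from Proposition \ref{propBR}. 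Using the explicit formula in the remark following that proposition, $D_R^-$ has diagonal entries proportional to $R_{i\lambda}x_{\lambda\mu}R_{i\mu}$; because $R$ is (the matrix of) a function, the two row-indices force $\lambda=\mu$, whereupon $x_{\lambda\mu}=\hat X_{\lambda\lambda}=0$. Hence $D_R^-=D_B^-=0$ identically, giving $\Mat(A_2\dg X.)=F(\hat X)$ with no correction term, and part 1 is immediate: $F(\hat X)=\hat X \iff \Mat(A_2\dg X.)=\Mat(X) \iff A_2\dg X.=\dg X.$, since $\Mat$ is a linear embedding.

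For part 2 the correction terms $D_R^+,D_B^+$ do not vanish, so nonnegativity is needed. In the forward direction, if $G(\hat X)=\hat X$ then $\hat X$ has vanishing trace (diagonal of $\hat X$ is zero by definition of $\Mat$), so $\tr G(\hat X)=0$; taking traces in the first displayed identity and using that $\Mat(XA_2)$ also has zero diagonal, the trace of $-\tfrac12(D_R^++D_B^+)$ is zero, and by Proposition \ref{propBR}\,\#3 (nonnegativity of $X$, $R$, $B$) these diagonal matrices are nonnegative, so they vanish; thus $\Mat(XA_2)=G(\hat X)=\hat X=\Mat(X)$ and $XA_2=X$. For the converse, assume $XA_2=X$; then the left side of the first identity has vanishing trace and the diagonal terms contribute $-\tfrac12\tr(D_R^++D_B^+)=-\tfrac12\cdot 2\,\tr G(\hat X)$-type bookkeeping — more cleanly, I would invoke the zeon integration-by-parts relation \eqref{eqPTS}, namely $X(I-A_2)\dg u.=\tfrac12\tr G(\hat X)$: the left-hand side is zero by hypothesis, so $\tr G(\hat X)=0$, which forces $D_R^+=D_B^+=0$ by nonnegativity, and then the first identity reads $\Mat(X)=G(\hat X)$, i.e. $G(\hat X)=\hat X$.

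The routine part is the averaging and the determinations $D_R^\pm,D_B^\pm$, all of which are bookkeeping already set up in Proposition \ref{propBR} and its remark. The one step deserving care — and the place where the hypotheses genuinely bite — is the converse of part 2: one must resist simply substituting $G(\hat X)=\hat X$ and instead extract $\tr G(\hat X)=0$ from the fixed-point assumption through the integration-by-parts identity \eqref{eqPTS}, then use nonnegativity to upgrade "trace zero" to "matrix zero" via Proposition \ref{propBR}\,\#3. This is the crux: without nonnegativity the diagonal corrections could be nonzero traceless-sum cancellations, and the equivalence would fail.
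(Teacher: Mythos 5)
Your proposal is correct and follows essentially the same route as the paper: the same averaged Basic Relations, the same observation that $D_R^-$ and $D_B^-$ vanish because $R$ and $B$ are functional matrices and $\hat X$ has zero diagonal, and the same use of the integration-by-parts identity \eqref{eqPTS} plus nonnegativity to kill $D_R^+$ and $D_B^+$ in the converse of part 2. No gaps.
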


\subsection{Quantum observables via zeons in summary}
We have $\Omega_2$ as the Ces\`aro limit of the powers $A_2^m$.
Then with $u$ here denoting the all-ones vector of dimension ${\binom{n}{2}}$, we have
$$u_2^\dagger=\Omega_2\dg  u.  \qquad\text{and}\qquad \pi_2=u\Omega_2$$

1. $N_0=\avg J-KK^*.=\Mat(u_2)$ satisfies $\sym{A}{N_0}+\sym{\Delta}{N_0}=N_0$. It is a nonnegative solution with $\tr N_0=0$.
$N=\avg KK^*.$ and $J=\dg  u.u$ are also nonnegative solutions, with trace equal to $n$.\bigskip

2. $M_0=M-\tau\omega$. From Proposition \ref{prop:level2}, we have $\avg K^*K.=n\omega$.
This allows us to express 
$$M_0=\avg K^*JK.-\tau/n \avg K^*K. \ .$$
$M_0=\Mat(\pi_2)$ satisfies $\symm{A}{M_0}+\symm{\Delta}{M_0}=M_0$. It is a nonnegative solution with $\tr M_0=0$.
$M=\avg K^*JK.$ and $\omega=\diag{\pi}$ are also nonnegative solutions, with trace equal to $\tau$ and $1$ respectively.\bigskip

3. We have $\tilde M=\avg K^*\Omega K.= nr^{-2}\avg {\dg \utilde\rho.}\utilde\rho.$.The diagonal of $\utilde\rho^\dagger \utilde\rho$
is $\rho$ so the diagonal of $\tilde M$ is $r\omega$. Thus, $\tilde M_0=\tilde M-r\omega$ has zero diagonal. 
Hence $\tilde M_0$ and $M_0$ are nonnegative, symmetric, with vanishing trace, elements of $\M_0$, corresponding to solutions to $XA_2=X$.
Notice that $\tr\Omega_2=1$ would imply that the space of such solutions equals 1 so in that case $\tilde M_0$ and $M_0$ are proportional. Thus,
$M_0$ could be computed using knowledge of the range classes.

\subsection{Levels in the zeon hierarchy}
At level 1, we have left and right eigenvectors of $A$: $\pi_1=\pi$ and $u_1^+=u^+$, respectively.
All of the components of $u$ are equal to 1. And $\Omega=\Omega_1=u^+\pi$.\bigskip

At every level $\ell$, $1\le \ell\le r$, we have an Abel limit 
$$\Omega_\ell=\lim_{s\uparrow1} (1-s)(I-sA_\ell)^{-1}$$
When $\pi_\ell$ and $u_\ell$ are unique, we have $\Omega_\ell=u_\ell^+\pi_\ell$.\bigskip

At level $r$, as at level 1, we have $\Omega_r=u_r^+\pi_r$.
\subsubsection{Interpretation}

Start at level $r$. Then the nonzero entries of $\pi_r$ are in the columns corresponding
to the ranges of the kernel, $\K$. The nonzero entries of $u_r^+$ are in rows corresponding
to cross-sections of the partition classes of $\K$. I.e., for each partition class, find all 
possible cross-sections weighted by one, take the total,  and you get a lower bound on the corresponding entries
of $u_r^+$. In particular, for a right group, all cross-sections appear with
the same weight, which can be scaled to 1.\bigskip

At each level, if unique, $\pi_\ell$ and $u_\ell$ give you information about the recurrent class and corresponding
cross-sections.\bigskip

A sequence of pairs $(\pi_\ell,u_\ell)$ can be constructed inductively from 
$(\pi_r,u_r)$ as consecutive marginals. Thus, for $\ell<r$, let
$$ \pi_\ell(\rI)=\sum_{\rJ\supset \rI}\pi_{\ell+1}(\rJ) $$
Note that in the sum each $\rJ$ differs from $\rI$ by a single vertex. Starting from level $r$, we 
inductively move down the hierarchy and determine a fixed point of $A_\ell$, $\pi_\ell$,  for each level $\ell$. \bigskip

For $u$'s, proceed as follows:
$$ u_\ell(\rI)=\sum_i p_i \,u_{\ell+1}(\rI\cup \{i\}) $$
where the weights $p_i$ are the components of the invariant measure $\pi$.\bigskip

With this construction, one recovers at level one, a multiple of $\pi_1=\pi$ and a
constant vector $u_1$, a multiple of $u$. \bigskip

Note that the rank $r$ is the maximum level $\ell$ such that $\Omega_\ell\ne0$.\bigskip

For our closing observation, we remark that 
by introducing an absorbing state at each level, $R_\ell$, $B_\ell$, and $A_\ell$, can be extended 
to stochastic matrices as at level one. \bigskip

\begin{remark} For further information as to the structure of the zeon hierarchy see \cite{BF}.
\end{remark}

\section{Conclusion}

At this point the interpretation of the eigenvalues and eigenvectors of the operators $M$ and $N$ is unavailable. It would
be quite interesting to discover their meaning. 
For further work,  it would be nice to have  similar constructions for at least some  classes of infinite semigroups, e.g. compact semigroups.
Another avenue to explore would be to develop such a theory for semigroups of operators on Hilbert space.

\appendix
\section{Semigroups and kernels}\label{sec:sgrpskrnls}
A finite semigroup, in general, is a finite set, $\SG$, which is closed under a binary associative operation. 
For any subset $T$ of $\SG$, we will write $\E(T)$ to refer to the set of idempotents in $T$. 
\begin{theorem}\cite[Th. 1.1]{HM}	\label{thm:kernel} 
Let $\SG$ be a finite semigroup. Then $\SG$ contains a minimal ideal $\K$
called the kernel which is a disjoint union of isomorphic groups. 
In fact, $\mathcal{K}$ is isomorphic to $\ms.X. \times \ms.G.  \times \ms.Y.$ where, given $e\in \E(\SG)$, then
$e\mathcal{K}e$ is a group and
\[
\ms.X. =\E( \K e)\,,\qquad  \ms.G. = e\K e\,, \qquad  \ms.Y. = \E(e\K)
\]
and if $\left( x_1,g_1,y_1\right) $, $\left( x_2,g_2,y_2\right)  \in \ms.X. \times \ms.G. \times \ms.Y.$
then the multiplication rule has the form
\[
(x_1, g_1, y_1) (x_2, g_2, y_2) = (x_1, g_1\phi( y_1,x_2) g_2, y_2)
\]
where $\phi\colon \ms.Y.\times\ms.X.\to\ms.G.$ is the \textit{sandwich function}.
\end{theorem}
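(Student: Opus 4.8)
The plan is to re-derive the classical Rees--Suschkewitsch structure theorem for finite semigroups, in four stages. \emph{Stage 1: existence, uniqueness, and simplicity of the kernel.} Since $\SG$ is finite, the nonempty family of its two-sided ideals has a minimal member $\K$, and for any $a\in\K$ the set $\SG^1 a\SG^1$ is an ideal inside $\K$, so $\K=\SG^1 a\SG^1$. If $\K_1,\K_2$ were both minimal, then $\K_1\K_2$ is a nonempty ideal contained in $\K_1\cap\K_2$, forcing $\K_1=\K_2$; hence the kernel is unique and lies in every ideal. The same pull-back principle shows $\K$ is simple as a semigroup: an ideal $I$ of $\K$ yields an ideal $\K I\K$ of $\SG$ that sits inside $\K$ and also inside $I$, so $I=\K$; equivalently $\K a\K=\K$ for every $a\in\K$.

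\emph{Stage 2: the local group, and complete simplicity.} A finite semigroup has idempotents, so fix $e\in\E(\K)$ (I read the hypothesis as $e\in\E(\K)$; see the closing remark). Then $\ms.G.:=e\K e$ is a subsemigroup with two-sided identity $e$, and simplicity of $\K$ supplies inverses: from $e\in\K=\K(se)\K$ one manufactures a two-sided inverse of $ese$ inside $e\K e$, so $\ms.G.$ is a group — equivalently $e\K e=e\K\cap\K e=H_e$, the $\mathcal H$-class of $e$, which is a group by Green's theorem. Being finite and simple, $\K$ is \emph{completely simple}: $\mathcal J=\mathcal D$, every $\mathcal H$-class contains an idempotent and is a group isomorphic to $\ms.G.$ by Green's Lemma, and the idempotents of $\K$ form a rectangular band. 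As the $\mathcal H$-classes partition $\K$, this already establishes that $\K$ is a disjoint union of mutually isomorphic groups. Now set $\ms.X.:=\E(\K e)$ and $\ms.Y.:=\E(e\K)$; the rectangular-band structure of $\E(\K)$ makes $\ms.X.$ a left-zero band and $\ms.Y.$ a right-zero band, both containing $e$, so $ex=e$, $xe=x$ for $x\in\ms.X.$ and $ye=e$, $ey=y$ for $y\in\ms.Y.$.

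\emph{Stages 3--4: coordinates and the sandwich rule.} For $k\in\K$ let $x(k)$ be the unique element of $\ms.X.$ with $x(k)k=k$, let $y(k)$ be the unique element of $\ms.Y.$ with $ky(k)=k$, and put $g(k):=eke\in\ms.G.$. One shows $k=x(k)\,g(k)\,y(k)$ and that this factorization is unique, so $k\mapsto(x(k),g(k),y(k))$ is a bijection $\K\to\ms.X.\times\ms.G.\times\ms.Y.$. For the multiplication, compute in $\SG$
$$(x_1 g_1 y_1)(x_2 g_2 y_2)=x_1\,\bigl(g_1\,(y_1x_2)\,g_2\bigr)\,y_2,$$
observe $y_1x_2\in e\K\cap\K e=\ms.G.$ so that $g_1(y_1x_2)g_2\in\ms.G.$, and use $ex_1=e$, $y_2e=e$, $e\ms.G. e=\ms.G.$ to read off that the coordinates of the right-hand side are exactly $(x_1,\,g_1(y_1x_2)g_2,\,y_2)$. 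Setting $\phi(y,x):=yx$ gives precisely the stated rule $(x_1,g_1,y_1)(x_2,g_2,y_2)=(x_1,g_1\phi(y_1,x_2)g_2,y_2)$.

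The main obstacle is Stage 3: proving that $\E(\K e)$ and $\E(e\K)$ are genuinely the left/right factors of the idempotent rectangular band, and that every $k\in\K$ admits a \emph{unique} factorization $x(k)\,g(k)\,y(k)$ whose three coordinates transform correctly under the product. This is exactly where the full force of complete simplicity — Green's relations together with primitivity of the idempotents of $\K$ — is used; Stages 1, 2, and 4 are then bookkeeping. A remark on generality: the hypothesis ``$e\in\E(\SG)$'' should read $e\in\E(\K)$ (for $e$ an idempotent of $\SG$ outside $\K$, $e\K e$ need not be a group, e.g.\ $e$ the identity of a transformation monoid); if one only has $e\in\E(\SG)$, first replace it by an idempotent of the nonempty finite semigroup $e\K e\subseteq\K$ and run the argument with that.
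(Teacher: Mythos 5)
The paper offers no proof of this statement---it is quoted from H\"ogn\"as--Mukherjea \cite[Th.~1.1]{HM} as background in Appendix~\ref{sec:sgrpskrnls}---so you are being measured against the classical Suschkewitsch--Rees argument, and your outline is that argument. Stage~1 (existence, uniqueness and simplicity of the minimal ideal via $\K a\K=\K$) and Stage~4 (the computation $(x_1g_1y_1)(x_2g_2y_2)=x_1\bigl(g_1(y_1x_2)g_2\bigr)y_2$ with $y_1x_2\in e\K\cap\K e=e\K e$, giving $\phi(y,x)=yx$) are correct as written. Your closing remark is also a genuine catch: the hypothesis must be $e\in\E(\K)$ rather than $e\in\E(\SG)$, since for an idempotent outside the kernel $e\K e$ need not be a group (your transformation-monoid example is exactly right, and the paper's own usage elsewhere always takes $e\in\E(\K)$).

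Two things need repair. The one outright error is the assertion that \emph{the idempotents of $\K$ form a rectangular band}. This is false for a general completely simple semigroup: $\E(\K)$ need not even be closed under multiplication (that happens only in the orthodox case; in Rees coordinates the product of the idempotents $(i,p_{\lambda i}^{-1},\lambda)$ and $(j,p_{\mu j}^{-1},\mu)$ has middle coordinate $p_{\lambda i}^{-1}p_{\lambda j}p_{\mu j}^{-1}$, which need not equal $p_{\mu i}^{-1}$). What you actually use---that $\E(\K e)$ is a left-zero semigroup and $\E(e\K)$ a right-zero semigroup---is true but must be proved directly, e.g.: for $x_1,x_2\in\E(\K e)$, minimality of the left ideal $\K e$ gives $\K x_2=\K e\ni x_1$, so $x_1=sx_2$ for some $s$, whence $x_1x_2=sx_2^2=sx_2=x_1$. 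The second issue is that you explicitly defer Stage~3, and that is where the theorem lives: you still owe the existence and uniqueness of $x(k)$ and $y(k)$ (each $k$ lies in a unique minimal right ideal $x\K$ with $x\in\E(\K e)$, using that an $\mathcal{H}$-class contains at most one idempotent) and the verification that $k\mapsto(x(k),eke,y(k))$ is a bijection. The computations you sketch for this ($k=x(k)(eke)y(k)$ via $xe=x$, $ey=y$; recovery of the coordinates by multiplying by $e$ on both sides) are the right ones, but as submitted this stage is an announced plan rather than a proof.
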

The product structure $\ms.X. \times \ms.G. \times \ms.Y.$ is called a Rees product and any semigroup that has 
a Rees product is \textit{completely simple}. 
The kernel of a finite semigroup is always completely simple.

\subsection{Kernel of a matrix semigroup}  \label{ssec:kms}

An extremely useful characterization of the kernel for a semigroup of matrices is known.
\begin{theorem}	\cite[Props. 1.8, 1.9]{HM}	\label{thm:minrank}  
Let $\SG$ be a finite semigroup of matrices. Then the kernel, $\K$, of $\SG$ is the set of matrices with minimal rank.
\end{theorem}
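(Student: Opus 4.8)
The plan is to show both inclusions: first that every matrix in $\K$ has minimal rank among elements of $\SG$, and second that every element of $\SG$ achieving the minimal rank lies in $\K$. The starting point is the observation that rank is \emph{monotone under matrix multiplication}, i.e. $\mathrm{rk}(MN)\le\min(\mathrm{rk}\,M,\mathrm{rk}\,N)$, since the column space of $MN$ is contained in that of $M$ and the row space in that of $N$. Let $\rho_0$ denote the minimal rank occurring among elements of $\SG$, attained by some element $s_0$.

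First I would show $\K$ consists of matrices of rank $\rho_0$. Pick $s_0\in\SG$ of rank $\rho_0$. Since $\K$ is the minimal (two-sided) ideal, $\K\subseteq \SG s_0\SG$, so every $k\in\K$ factors as $k=as_0b$ with $a,b\in\SG^1$; by monotonicity $\mathrm{rk}\,k\le\mathrm{rk}\,s_0=\rho_0$, and minimality of $\rho_0$ forces $\mathrm{rk}\,k=\rho_0$. Hence all of $\K$ has rank exactly $\rho_0$, and in particular no element of $\SG$ has rank below that of a kernel element.

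Next I would prove the converse: if $s\in\SG$ has $\mathrm{rk}\,s=\rho_0$, then $s\in\K$. The idea is to use the ideal $\SG^1 s\SG^1$ and a finiteness/idempotent argument. Consider the principal ideal $I=\SG^1 s\SG^1$. Every element of $I$ has rank $\le\rho_0$, hence rank exactly $\rho_0$ by minimality. One then argues that $I$ must coincide with $\K$: since $\K$ is contained in \emph{every} ideal, $\K\subseteq I$; for the reverse, I would take an idempotent $e\in\K$ (which exists and has rank $\rho_0$) and show that multiplying $s$ into the appropriate $\mathcal{H}$-class via $e$ — using that multiplication by $e$ on a rank-$\rho_0$ matrix cannot drop the rank, so it acts invertibly on the relevant column/row spaces — produces the relation $s\,\K$ and $\K\,s$ meeting $\K$, forcing $s$ itself into $\K$. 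Concretely: $ese$ has rank $\rho_0$, lies in the group $e\K e$, so $ese$ is a unit there, giving $g\in e\K e$ with $ese\,g = e = g\,ese$; combined with the rank-preservation one deduces $s\in \SG e\SG\subseteq\K$.

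The main obstacle is the second inclusion — specifically, upgrading "same rank as the kernel" to "membership in the kernel." Rank equality alone does not obviously localize $s$ into the minimal ideal; one needs the structural input that the minimal ideal of a finite semigroup is the unique $\mathcal{J}$-minimal class, together with the fact that for matrices, $\mathcal{J}$-equivalence within the minimal-rank stratum is detected by rank (this is essentially where Green's relations and the complete simplicity of $\K$ enter). I would lean on Theorem~\ref{thm:kernel} for the Rees structure and idempotents, and handle the rank-preservation bookkeeping via the column-space/row-space containments noted above; the delicate point is verifying that no strictly smaller $\mathcal{J}$-class of rank $\rho_0$ can exist separately from $\K$, which follows from minimality of $\rho_0$ and the fact that any two elements of rank $\rho_0$ generate, through the ideal they span, elements that feed back into rank $\rho_0$, leaving $\K$ as the only possibility.
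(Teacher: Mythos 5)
First, a point of reference: the paper does not prove this statement at all --- it is imported verbatim from H\"ogn\"as--Mukherjea \cite[Props.~1.8, 1.9]{HM} --- so there is no in-paper argument to compare against; your proposal has to stand on its own.

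Your first inclusion is correct and complete: since $\mathrm{rk}(MN)\le\min(\mathrm{rk}\,M,\mathrm{rk}\,N)$, the set $T$ of minimal-rank elements is a two-sided ideal, and $\K$, being contained in every ideal, lies in $T$ (equivalently, your factorization $k=as_0b$ through $\SG^1 s_0\SG^1$). The genuine gap is in the converse, exactly where you flag it. From the invertibility of $ese$ in the group $e\K e$ you obtain $g$ with $ese\,g=e$, i.e.\ $e\in\SG^1 s\SG^1$ --- but that is the \emph{already known} direction ($\K$ lies in the ideal generated by $s$). The statement you actually need, $s\in\SG^1 e\,\SG^1\subseteq\K$, i.e.\ a factorization of $s$ \emph{through} a kernel element, is asserted (``combined with the rank-preservation one deduces $s\in\SG e\SG$'') but never derived; likewise the closing claim that ``$\mathcal{J}$-equivalence within the minimal-rank stratum is detected by rank'' is precisely the theorem, not an available input. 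Note also that no purely Green-relations manipulation can close this, because the needed fact is genuinely matrix-theoretic: an ideal all of whose elements share a common value of some submultiplicative weight need not be the minimal ideal in an abstract finite semigroup.

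A concrete way to finish along your lines: since all powers of $s$ lie in $\SG$, minimality gives $\mathrm{rk}\,s=\mathrm{rk}\,s^2=\cdots$, so the column space and null space of $s$ stabilize at once; if $f=s^q$ is the idempotent power of $s$, linear algebra then gives $sf=fs=s$. Now take $e\in\E(\K)$ and let $h$ be the idempotent power of $fef\in\K$ (so $h\in\K$). All these elements have rank $\rho_0$, so the chains of inclusions of column spaces and of row spaces collapse to equalities: $h$ and $f$ are idempotent matrices with the same column space and the same null space, hence $h=f$ (an idempotent is the projection onto its image along its kernel). Therefore $f\in\K$ and $s=sf\in\SG\K\subseteq\K$. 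This supplies the missing mechanism --- producing a kernel idempotent that absorbs $s$ --- which your sketch gestures at via ``rank preservation'' but does not actually construct.
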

Suppose $\SG=\SG((C_1,C_2,\ldots,C_d))$ is a semigroup generated by binary stochastic matrices $C_i$.
Then $\SG$ is a finite semigroup with kernel $\mathcal{K} = \ms.X. \times \ms.G. \times \ms.Y.$, the Rees product structure of
Theorem \ref{thm:kernel}. Let $k$, $k'$ be elements of $\mathcal{K}$. 
Then with respect to the Rees product structure $k=\left( k_1,k_2,k_3\right) $ and $k'=\left( k'_1,k'_2 ,k'_3\right) $. 
We form the ideals $\mathcal{K}k$, $k'\mathcal{K}$, and their intersection $k'\mathcal{K}k$: \bigskip

\begin{enumerate}
\item $\mathcal{K}k = \ms.X. \times \ms.G. \times \left\{k_3\right\}$ is a minimal left ideal in $\mathcal{K}$ whose elements all have the same range, or nonzero columns as $k$. 
We call this type of semigroup a \textit{left group}.  \bigskip

\item $k'\mathcal{K} = \left\{k'_1\right\} \times \ms.G. \times \ms.Y.$ is a minimal right ideal in $\mathcal{K}$ whose elements all have the same partition of the vertices as $k'$. 
A block $\B_j$ in the partition can be assigned to each nonzero column of $k'$ by 
\[
\B_j =\{ i : k'_{i,j}=1\}
\]
A semigroup with the structure $k'\mathcal{K}$ is a \textit{right group}.  \bigskip

\item $k'\mathcal{K}k$, the intersection of $k'\mathcal{K}$ and $\mathcal{K}k$, is a maximal group in $\mathcal{K}$ 
(an $H$-class in the language of semigroups). It is best thought of as the set of functions specified by the partition of $k'$ and the range of $k$,
acting as a group of permutations on the range of $k$.
The idempotent of $k'\mathcal{K}k$ is the function which is the identity when restricted to the range of $k$.
\end{enumerate}

\section{Abel limits}\label{app:b}
We are  working with stochastic and substochastic matrices. We give the details here on the
existence of Abel limits. Let $P$ be a substochastic matrix. The entries $p_{ij}$ satisfy
$0\le p_{ij}\le1$ with $\sum_jp_{ij}\le1$. Inductively, $P^n$ is substochastic for every integer $n>0$, with
$P^n$ stochastic if $P$ is.

\begin{proposition}     
The Abel limit
$$\Omega=\lim_{s\uparrow1}\, (1-s)(I-sP)^{-1}$$
exists and satisfies
$$ \Omega=\Omega^2=P\Omega=\Omega P$$
\end{proposition}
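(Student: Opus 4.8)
The plan is to establish existence of the Abel limit first, then derive the algebraic identities from it. For existence, I would decompose $\QQ^n$ according to the spectral behavior of $P$. Since $P$ is substochastic, the Gershgorin disk theorem places every eigenvalue in the closed unit disk, so $\|\,sP\,\|$-type estimates give that $(I-sP)^{-1}$ exists for $0\le s<1$ and moreover $(I-sP)^{-1}=\sum_{m\ge 0}s^mP^m$. Hence
$$(1-s)(I-sP)^{-1}=(1-s)\sum_{m\ge0}s^mP^m=\sum_{m\ge0}(1-s)s^m\,P^m,$$
which exhibits the Abel limit as an Abel mean of the sequence $\{P^m\}$. The standard fact that Abel summability follows from Ces\`aro summability (with the same value) then reduces the claim to the existence of the Ces\`aro limit $\lim_{M\to\infty}M^{-1}\sum_{m=1}^M P^m$. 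That Ces\`aro limit exists because the Jordan form of $P$ has, on the spectral subspace for eigenvalue $1$, only $1\times 1$ blocks (a substochastic matrix cannot have a nontrivial Jordan block at an eigenvalue of modulus $1$, since powers would blow up, contradicting substochasticity), while all other eigenvalues $\zeta$ satisfy either $|\zeta|<1$ (contributing $0$ in the mean) or $|\zeta|=1,\ \zeta\ne1$ (contributing $0$ since $M^{-1}\sum_{m}\zeta^m\to0$). I would write $\Omega$ for this common limit.

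Next I would record the algebraic identities. From $(1-s)(I-sP)^{-1}=\Omega+o(1)$ and the resolvent identity, multiply the defining relation $(I-sP)\cdot(1-s)(I-sP)^{-1}=(1-s)I$ on the left by $P$ and rearrange: $(1-s)(I-sP)^{-1}-s\,P\,(1-s)(I-sP)^{-1}=(1-s)I$, so
$$P\,\bigl[(1-s)(I-sP)^{-1}\bigr]=s^{-1}\Bigl((1-s)(I-sP)^{-1}-(1-s)I\Bigr).$$
Letting $s\uparrow1$, the right-hand side tends to $\Omega-0=\Omega$, and the left-hand side tends to $P\Omega$; hence $P\Omega=\Omega$. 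The identity $\Omega P=\Omega$ follows by the symmetric argument, multiplying on the right by $P$ instead. For $\Omega^2=\Omega$: since $P\Omega=\Omega$, by induction $P^m\Omega=\Omega$ for all $m\ge1$, so $\bigl[(1-s)(I-sP)^{-1}\bigr]\Omega=\sum_{m\ge0}(1-s)s^m P^m\Omega=(1-s)\Omega+\sum_{m\ge1}(1-s)s^m\Omega=\Omega$, and letting $s\uparrow1$ gives $\Omega\Omega=\Omega$. This last step can also be phrased directly via $\Omega P=\Omega$ combined with the series expansion; either route is routine once $P\Omega=\Omega$ and $\Omega P=\Omega$ are in hand.

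The main obstacle is the existence argument itself, specifically ruling out a nontrivial Jordan block of $P$ at an eigenvalue on the unit circle. I would handle this by the growth estimate: entries of $P^m$ are bounded by $1$ for all $m$ (substochasticity is preserved under multiplication, as noted in the excerpt just before the proposition), so $\{P^m\}$ is a bounded sequence; a nontrivial Jordan block at $\zeta$ with $|\zeta|=1$ would force some entry of $P^m$ to grow like $m|\zeta|^{m-1}=m$, a contradiction. Once boundedness plus triangularizability are combined, the Ces\`aro limit exists by the eigenvalue-by-eigenvalue computation above, and Abel's theorem (Ces\`aro summable $\Rightarrow$ Abel summable to the same limit, valid for bounded sequences) transfers this to the stated Abel limit. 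The remaining verifications of the three identities are then short and mechanical.
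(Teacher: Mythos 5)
Your proof is correct, but it takes a genuinely different route from the paper's. You argue spectrally: boundedness of $\{P^m\}$ rules out nontrivial Jordan blocks at eigenvalues of modulus one, an eigenvalue-by-eigenvalue computation then gives the Ces\`aro limit, and the Frobenius theorem (Ces\`aro summable $\Rightarrow$ Abel summable to the same value) transfers this to the Abel mean; the three identities follow afterwards by resolvent manipulations. The paper instead runs a compactness argument entirely inside the Abel framework: the matrix entries of $Q(s)=(1-s)(I-sP)^{-1}$ are uniformly bounded by $1$, so a diagonal extraction produces a subsequential limit $\Omega$; the identities $\Omega=P\Omega=\Omega P$ and $Q(s)\Omega=\Omega Q(s)=\Omega$ (hence $\Omega^2=\Omega$) are established for \emph{any} subsequential limit, and then those very identities force any two subsequential limits $\Omega,\Omega_1$ to satisfy $\Omega_1\Omega=\Omega\Omega_1=\Omega=\Omega_1$, which is what proves the limit exists. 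So in the paper the algebraic relations are not corollaries of existence but the engine that proves it. Your route costs more machinery (Jordan form over the complex numbers, the Abelian theorem) and needs the small care you supplied about transporting the entrywise bound on $P^m$ to a bound on $J^m$ via norm equivalence; what it buys is strictly more information, namely the existence of the Ces\`aro limit itself and its equality with the Abel limit, a fact the paper asserts in this appendix ("the Abel limits exist \ldots in agreement with the Ces\`aro limits") but does not prove there. Both arguments are complete; yours is a legitimate alternative, not a patch of the paper's.
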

\begin{proof}     
We take $0<s<1$ throughout.\bigskip

For each $i,j$, the matrix elements $\avg e_i,P^ne_j.$ are uniformly bounded by 1. Denote
$$Q(s)=(1-s)(I-sP)^{-1}$$
So we have
$$\avg e_i,Q(s)e_j.=(1-s)\sum_{n=0}^\infty s^n\avg e_i,P^ne_j.\le (1-s)\sum_{n=0}^\infty s^n=1$$
so the matrix elements of $Q(s)$ are bounded by 1 uniformly in $s$ as well. Now take any sequence $\{s_j\}$,
$s_j \uparrow 1$. Take a further subsequence $\{s_{j11}\}$ along which the $11$ matrix elements converge.
Continuing with a diagonalization procedure going successively through the matrix elements, we
have a subsequence $\{s'\}$ along which all of the matrix elements converge, i.e., along which
$Q(s')$ converges. Call the limit $\Omega$.\bigskip

Writing $(I-sP)^{-1}-I=sP(I-sP)^{-1}$, multiplying by $1-s$ and letting $s \uparrow 1$ along $s'$ yields
$$\Omega=P\Omega=\Omega P$$
writing the $P$ on the other side for this last equality. Now, $s\Omega=sP\Omega$ 
implies $(1-s)\Omega=(I-sP)\Omega$. Similarly,  $(1-s)\Omega=\Omega(I-sP)$, so 
\begin{equation}\label{eq:abel} Q(s)\Omega=\Omega Q(s)=\Omega \end{equation}
Taking limits along $s'$ yields $\Omega^2=\Omega$.\bigskip

For the limit to exist, we check that $\Omega$ is the only limit point of $Q(s)$. From \eqref{eq:abel},
if $\Omega_1$ is any limit point of $Q(s)$, $\Omega_1\Omega=\Omega\Omega_1=\Omega$. 
Interchanging r\^oles of $\Omega_1$ and $\Omega$ yields 
$\Omega_1\Omega=\Omega\Omega_1=\Omega_1$ as well, i.e., $\Omega=\Omega_1$.\hfill\qedhere
\end{proof}

If $P$ is stochastic, it has $u^+$ as a nontrivial fixed point. In general we have

\begin{corollary}  
Let $\Omega=\lim\limits_{s\uparrow1}\, (1-s)(I-sP)^{-1}$ be the Abel limit of the powers of $P$. \bigskip

$P$ has a nontrivial fixed point if and only if $\Omega\ne0$.\bigskip
\end{corollary}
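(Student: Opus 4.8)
The plan is to establish both directions of the equivalence using the defining relations $\Omega = \Omega^2 = P\Omega = \Omega P$ already proved in the Proposition, together with the observation that $Q(s)$ and hence $\Omega$ is obtained as a limit of convex combinations of the powers $P^n$ acting on the nonnegative orthant.

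First I would prove the easy direction: if $\Omega \neq 0$, then $P$ has a nontrivial fixed point. Since $\Omega = P\Omega$, every nonzero column of $\Omega$ is a fixed vector of $P$; as $\Omega \neq 0$ it has a nonzero column, which supplies the required nontrivial fixed point. (Symmetrically, a nonzero row of $\Omega$ gives a nontrivial left fixed vector via $\Omega = \Omega P$.)

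For the converse I would argue the contrapositive: if $\Omega = 0$, then $P$ has no nontrivial fixed point. Suppose $Pv = v$ for some vector $v$. Then $P^n v = v$ for all $n \geq 0$, so $Q(s) v = (1-s)\sum_{n\ge 0} s^n P^n v = (1-s)\sum_{n \ge 0} s^n v = v$ for every $s \in (0,1)$. Letting $s \uparrow 1$ along the subsequence $s'$ from the Proposition gives $v = \Omega v = 0$. Hence the only fixed point is trivial. Combining the two directions yields the stated equivalence.

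I do not expect a serious obstacle here; the content is essentially bookkeeping with the identities already in hand. The one point requiring a little care is making sure the limiting relation $Q(s')v \to \Omega v$ is used rather than an unjustified interchange of limit and sum — but since $Q(s)v = v$ identically in $s$ (not merely in the limit), the passage to the limit is immediate and no convergence subtlety arises. It is also worth remarking that this argument works over $\QQ$ or $\mathbb{R}$ equally well, and that when $P$ is stochastic the vector $v = u^\dagger$ is always such a nontrivial fixed point, so $\Omega \neq 0$ in that case, consistent with the Abel limit being itself a stochastic matrix.
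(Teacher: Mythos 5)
Your proof is correct and follows essentially the same route as the paper: nonzero columns of $\Omega$ serve as fixed points via $P\Omega=\Omega$, and conversely $Pv=v$ gives $Q(s)v=v$ for all $s$, hence $\Omega v=v$ in the limit. The contrapositive phrasing and the closing remarks are harmless variations; no gap.
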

\begin{proof}  
If $\Omega\ne0$, then $P\Omega=\Omega$ shows that any nonzero column of $\Omega$ is a nontrivial fixed point.
On the other hand, if $Pv=v\ne0$, then, as in the above proof, $Q(s)v=v$ and hence $\Omega v=v$ shows that $\Omega\ne0$.\hfill\qedhere
\end{proof}
Note that since $\Omega$ is a projection, its rank is the dimension of the space of fixed points.  \bigskip

{\bf Acknowledgment.}\ 
The author is indebted to G.\,Budzban for introducing him to this subject. \bigskip

This article is motivated by the QP32 Conference in Levico, Italy, May-June, 2011. The author
wishes to thank the organizers for a truly enjoyable meeting.


\begin{thebibliography}{9}

\bibitem{BF}
Greg Budzban and Philip Feinsilver.
\newblock A hierarchical structure of transformation semigroups with applications to probability limit measures.
\newblock {\em J. Difference Equ. Appl.}, to appear
\newblock \texttt{http://www.tandfonline.com/doi/abs/10.1080/10236198.2011.639365}



%
%


\bibitem{BuMu}
Greg Budzban and Arunava Mukherjea.
\newblock A semigroup approach to the road coloring problem.
\newblock In {\em Probability on algebraic structures ({G}ainesville, {FL},
  1999)}, volume 261 of {\em Contemp. Math.}, pages 195--207. Amer. Math. Soc.,
  Providence, RI, 2000.



%
%
%

\bibitem{PF}
Ph.\tsp Feinsilver,
Zeon algebra, Fock space, and Markov chains,
\textit{Communications on stochastic analysis} \textbf{2}(2008), 263--275.

\bibitem{FRC}
Joel Friedman. 
\newblock On the road coloring problem.
\newblock {\em Proc. Amer. Math. Soc.}, 110(4):1133--1135, 1990.

\bibitem{HM}
G{\"o}ran H{\"o}gn{\"a}s and Arunava Mukherjea.
\newblock {\em Probability measures on semigroups}.
\newblock Probability and its Applications (New York). Springer, New York,
  second edition, 2011.

\bibitem{M}
Arunava Mukherjea.
\newblock Recurrent random walks in nonnegative matrices: attractors of certain
  iterated function systems.
\newblock {\em Probab. Theory Related Fields}, 91(3-4):297--306, 1992.


\end{thebibliography}
\end{document}